\theoremstyle{definition}
\newtheorem{thm}{Theorem}[section]
\newtheorem{lem}[thm]{Lemma}
\newtheorem{prop}[thm]{Proposition}
\newtheorem{cor}[thm]{Corollary}
\newtheorem*{rmk}{Remark}
\newcommand{\R}{\mathbb{R}}
\newcommand{\TT}{\mathcal{T}}
\newcommand{\curl}{\text{curl }}
\newcommand{\dive}{\text{div }}
\newcommand{\q}{\quad}
\newcommand{\p}{\partial}
\newcommand{\DD}{\mathcal{D}}
\newcommand{\lei}{L^{2}(\mathcal{D}_{t})}
\newcommand{\leb}{L^{2}(\p\mathcal{D}_{t})}
\newcommand{\lli}{L^{2}(\Omega)}
\newcommand{\llb}{L^{2}(\p\Omega)}
\newcommand{\lylt}{L^2([0,T];L^2(\Omega))}
\newcommand{\nab}{\nabla}
\newcommand{\lap}{\Delta}
\newcommand{\di}{\text{div}\,}
\newcommand{\detg}{\text{det}\,g}
\newcommand{\cnab}{\overline{\nab}}
\newcommand{\cp}{\overline{\partial}{}}
\newcommand{\dx}{\,dx}
\newcommand{\dy}{\,dy}
\newcommand{\dt}{\,dt}
\newcommand{\vol}{\text{vol}\,}
\newcommand{\volo}{\text{vol}\,\Omega}
\newcommand{\symdot}{\tilde{\cdot}}
\newcommand{\linf}{L^{\infty}(\p\Omega)}
\newcommand{\kk}{\kappa}
\newcommand{\EE}{\mathcal{E}}
\newcommand{\PP}{\mathcal{P}}
\newcommand{\wk}{\frac{\rho'(p)}{\rho}}
\newcommand{\wt}{\rho'(p)}
\newcommand{\swt}{\sqrt{\rho'(p)}}
\newcommand{\hl}{h^{\lambda}}
\newcommand{\hh}{\tilde{h}}
\newcommand{\wl}{w^{\lambda}}
\newcommand{\wh}{\tilde{w}}
\newcommand{\idt}{\int_{\mathcal{D}_t}}
\newcommand{\ipdt}{\int_{\partial\mathcal{D}_t}}
\newcommand{\io}{\int_{\Omega}}
\numberwithin{equation}{section}
\begin{document}
\bibliographystyle{plain}
\title{\textbf{A priori Estimates for the Free-Boundary problem of Compressible Resistive MHD Equations \\ and Incompressible Limit}}
\author{Junyan Zhang \thanks{Johns Hopkins University, 3400 N Charles St, Baltimore, MD 21218, USA. 
Email: \texttt{zhang.junyan@jhu.edu}} }
\maketitle

\begin{abstract}
In this paper, we prove the a priori estimates in Sobolev spaces for the free-boundary compressible inviscid magnetohydrodynamics equations with magnetic diffusion under the Rayleigh-Taylor physical sign condition. Our energy estimates are uniform in the sound speed. As a result, we can prove the convergence of solutions of the free-boundary compressible resistive MHD equations to the solution of the free-boundary incompressible resistive MHD equations, i.e., the incompressible limit. The key observation is that the magnetic diffusion together with elliptic estimates directly controls the Lorentz force, magnetic field and pressure wave simultaneously.
\end{abstract}

\bigskip

\noindent \textbf{Keywords}: Compressible fluids, Resistive MHD, Free-boundary problem, Incompressible limit.

\setcounter{tocdepth}{1}
\tableofcontents

\section{Introduction}
In this paper, we consider the 3D resistive magnetohydrodynamics(MHD) equations
\begin{equation}
\begin{cases}
\rho(\p_t u+u\cdot\p u)=B\cdot\p B-\p (p+\frac{1}{2}|B|^2)~~~& \text{in}~\DD; \\
\p_t\rho+\dive (\rho u)=0~~~&\text{in}~\DD; \\
\p_t B+u\cdot\p B-\lambda\Delta B=B\cdot\p u-B\dive u,~~~&\text{in}~\DD; \\
\dive B=0~~~&\text{in}~\DD,
\end{cases}\label{MHD}
\end{equation}
describing the motion of a compressible conducting fluid in an electro-magnetic field with magnetic diffusion, $\lambda>0$ is the magnetic diffusivity constant. $\DD={\cup}_{0\leq t\leq T}\{t\}\times \DD_t$ and $\DD_t\subset \R^3$ is the domain occupied by the conducting fluid  whose boundary $\p\DD_t$ moves with the velocity of the fluid. $\p=(\p_1,\p_2,\p_3)$ is the standard spatial derivative and $\dive X:=\p_k X^k$ is the standard divergence for any vector field $X$. Throughout this paper, $X^k=\delta^{kl}X_l$ for any vector field $X$, i.e., we use Einstein summation notation. The fluid velocity $u=(u_1,u_2,u_3)$, the magnetic field $B=(B_1,B_2,B_3)$, the fluid density $\rho$, the pressure $p$ and the domain $\DD\subseteq[0,T]\times\R^3$ are to be determined. Here we note that the fluid pressure $p=p(\rho)$ is assumed to be a given strictly increasing smooth function of the density $\rho$.

Given a simply-connected bounded domain $\DD_0\subset \R^3$ homeomorphic to the unit ball in $\R^3$ and the initial data $u_0$, $\rho_0$ and $B_0$ satisfying the constraints $\dive B_0=0$, we want to find a set $\DD$, the vector field $u$, the magnetic field $B$, and the density $\rho$ solving \eqref{MHD} satisfying the initial conditions:
\begin{equation}\label{MHDI}
\DD_0=\{x: (0,x)\in \DD\},\q (u,B,\rho)=(u_0, B_0,\rho_0),\q \text{in}\,\,\{0\}\times \DD_0.
\end{equation}

\begin{rmk}
Note that the divergence-free constraint on $B$ is only required for initial data. Such condition automatically holds for any positive time provided that it holds initially. In fact, one can get the heat equation of $\dive B$ by
\[
D_t(\dive B)-\lambda\Delta\dive B=-(\dive B)(\dive u).
\] We will prove $\dive u\in H^3$ and thus in $L^{\infty}$. Then standard energy estimate yields $\dive B=0$ provided it holds initially.
\end{rmk}

We also require the following conditions on the free boundary $\p\DD={\cup}_{0\leq t\leq T}\{t\}\times \p\DD_t$:
\begin{equation}\label{MHDB}
\begin{cases}
(\p_t+u\cdot \p)|_{\partial\DD}\in T(\partial\DD) \\
p=0&\text{on}~\partial\DD, \\
B=\mathbf{0}&\text{on}~\partial\DD,
\end{cases}
\end{equation} 
where $N$ is the exterior unit normal to $\p\DD_t$.

The first condition of \eqref{MHDB} means that the boundary moves with the velocity of the fluid. We will use the notation $D_t=\p_t+u\cdot\p$ throughout the rest of this paper, and $D_t$ is called the material derivative. The second condition in \eqref{MHDB} means that outside the fluid region $\DD_t$ is the vacuum. Since $p=p(\rho)$ and $p|_{\p\DD}=0$, we know the fluid density also has to be a constant $\bar{\rho_0}\geq 0$ on the boundary. We assume $\bar{\rho_0}>0$, corresponding to the case of liquid as opposed to a gas. Hence
\begin{equation}
p(\bar{\rho_0})=0,~~p'(\rho)>0,~~\text{for }\rho\geq \bar{\rho_0},
\end{equation}where we further assume $\bar{\rho_0}=1$ for simplicity.

Before we explain the third boundary condition $B=\mathbf{0}$ on $\p\DD_t$, it is necessary to introduce its original physical model. In fact, the free-boundary problem originates from the plasma-vacuum model: The plasma is confined in a vacuum in which there is another magnetic field $\hat{B}$. It is formulated as follows (see also chapter 4 of \cite{MHDphy} for the detailed formulation): Suppose that the free-interface between the plasma region $\Omega_+(t)$ and the vacuum region $\Omega_{-}(t)$ is $\Gamma(t)$ which moves with the plasma. Then it requires that \eqref{MHD} holds in the plasma region $\Omega_+(t)$ and the following equations hold for the magnetic field $\hat{B}$ in vacuum $\Omega_{-}(t)$:
\begin{equation}\label{outsideB}
\curl\hat{B}=\mathbf{0},~~~\dive\hat{B}=0.
\end{equation}  
On the interface $\Gamma(t)$, it is required that there is no jump for the pressure or the \textbf{normal component of magnetic fields}:
\begin{equation}\label{interface}
B\cdot N=\hat{B}\cdot N,
\end{equation}  where $N$ is the exterior unit normal to $\Gamma(t)$. Note that for ideal MHD (i.e. $\lambda=0$) the normal continuity $B\cdot N=\hat{B}\cdot N$ on $\Gamma(t)$ should not be an imposed boundary condition, otherwise the ideal MHD system is over-determined as a hyperbolic system. Instead, this is a direct result of propagation of the initial boundary condition $B_0\cdot N=\hat{B}_0\cdot N$. See also Hao-Luo \cite{hao2014priori} for details.

Now we are able to explain the third boundary condition $B=\mathbf{0}$ on $\p\DD_t$ (and also in the vacuum). In the ideal case $(\lambda=0)$, this condition can also be considered as the propagation from initial data, otherwise the ideal MHD (hyperbolic) system is over-determined if we set $B=\mathbf{0}$ on $\p\DD_t$ to be an imposed constraint. However, for resistive MHD $(\lambda>0)$, this condition no longer can be propagated from the initial data because the magentic diffusivity makes the plasma no longer a perfect conductor. Instead, it should be considered as an imposed constraint, which makes sense for a parabolic equation as opposed to the ideal case (hyperbolic system), and thus adding such a constraint will not make the system over-determined. Besides, this condition also yields that the physical energy is conserved when $\lambda=0$ and thus the energy is non-increasing for resistive MHD (see Section \ref{1.3} for detailed proof). 

Hence, the boundary conditions \eqref{MHDB} is the case that the outside magnetic field $\hat{B}$ vanishes in vacuum region in the classical plasma-vacuum model plus the imposed condition $B=\mathbf{0}$ on the boundary. In other words, the model we discuss in this paper is an isolated plasma liquid confined in a vacuum region.

\subsection{Free-boundary compressible resistive MHD equations}\label{1.1}

The free-boundary resistive compressible MHD system considered in this paper is
\begin{equation}
\begin{cases}
\rho D_t u=B\cdot\p B-\p (p+\frac{1}{2}|B|^2)~~~& \text{in}~\DD; \\
D_t\rho+\rho\dive u=0~~~&\text{in}~\DD; \\
D_t B-\lambda\Delta B=B\cdot\p u-B\dive u,~~~&\text{in}~\DD; \\
\dive B=0~~~&\text{in}~\DD,
\end{cases}\label{CMHD}
\end{equation}together with the initial conditions \eqref{MHDI} and the boundary conditions \eqref{MHDB}. As for the pressure $p$, we impose the following natural conditions on $\rho'(p)$ for some fixed constant $c_0:$
\begin{equation}\label{weight}
|\rho^{(m)}(p)|\leq c_0,\text{ and }~c_0^{-1}|\wt|^m\leq|\rho^{(m)}(p)|\leq c_0|\wt|^m,~~for~1\leq m\leq 6.
\end{equation}

To make the initial-boundary value problem \eqref{CMHD}, \eqref{MHDI} and \eqref{MHDB} solvable, the initial data has to satisfy certain compatibility conditions on the boundary. In fact, the continuity equation implies that $\dive v|_{\p\DD}=0$ and thus we have to require $p_0|_{\p\DD_0}=0$ and $\dive v_0|_{\p\DD_0}=0.$ Also the boundary condition $B=\mathbf{0}$ requires that $B_0|_{\p \DD_0}=\mathbf{0}$. Furthermore, we define the $k$-th($k\geq 0$) order compatibility condition as follows:
\begin{equation}\label{cck}
D_t^j p|_{\p\DD_0}=0,~~D_t^j B|_{\p\DD_0}=\mathbf{0}~~\text{at time }t=0~~\forall 0\leq j\leq k.
\end{equation}

Let $N$ be the exterior unit normal vector to $\p\DD_t$. We will prove the a priori bounds for \eqref{CMHD}, \eqref{MHDI} and \eqref{MHDB} in Sobolev spaces under the Rayleigh-Taylor physical sign condition
\begin{equation}\label{sign}
-\nabla_N P\geq\epsilon_0>0~~on~\p\DD_t,
\end{equation} where $\nabla_N:=N^i\p_i$,~~$\epsilon_0>0$ is a constant, and $P:=p+\frac{1}{2}|B|^2$ is the total pressure. This physical sign condition says that the total pressure is higher in the interior than that on the boundary. When $B=0$, i.e., in the case of the free-boundary compressible Euler's equations, the system will be illposed without this physical sign condition (See Ebin \cite{ebin1987equations} for counterexamples). For the free-boundary MHD equations, \eqref{sign} plays the same role as the Rayleigh-Taylor sign condition for the free-boundary Euler's equations, which was pointed out in Hao-Luo \cite{hao2014priori}. Moreover, Hao-Luo \cite{hao2018ill} proved that the free-boundary problem of 2D incompressible MHD equations is illposed when \eqref{sign} fails.

\subsection{History and background}\label{1.2}

The study of the motion of fluid has a long history. In particular, the free-boundary problem of inviscid fluid has blossomed over the past decades. Most of the results are focusing on the incompressible cases. The first breakthrough is the wellposeness of incompressible irrotational water wave problem solved in Wu's work \cite{wu1997LWPww,wu1999LWPww,wu2009GWPww,wu2011GWPww}. For the general incompressible problem with nonzero vorticity, Christodoulou-Lindblad \cite{christodoulou2000motion} first obtained the energy bound under the Rayleigh-Taylor sign condition from a geometric perspective. Then Lindblad \cite{lindblad2005well} proved the local wellposedness(LWP) with Nash-Moser iteration and Coutand-Shkoller \cite{coutand2007LWP} proved the local wellposedness by tangential smoothing which avoided using Nash-Moser iteration. See also Ambrose-Masmoudi \cite{masmoudi05}, Shatah-Zeng \cite{shatah2008geometry,shatah2008priori,shatah2011local}, Zhang-Zhang \cite{zhangzhang08Euler} and Alazard-Burq-Zuily \cite{alazard2014}. 

For the free-boundary compressible Euler equations in the case of a liquid, Lindblad \cite{lindblad2005cwell} proved the LWP in the case of a liquid by using Nash-Moser iteration. Later on, Lindblad-Luo \cite{lindblad2018priori} generalized the method in \cite{christodoulou2000motion} to compressible Euler in the case of a liquid and Ginsberg-Lindblad-Luo \cite{GLL2019LWP} proved the local wellposedness for the motion of compressible self-gravitating liquid. As for the incompressible limit, Lindblad-Luo \cite{lindblad2018priori} proved the incompressible limit in Sobolev norms for the free-boundary problem and the nonzero surface tension case was done by Disconzi-Luo \cite{luo2019limit}. In the case of a gas, we refer to \cite{coutand2010priori, coutand2012LWP, jang2014gas,luozeng2014} and references therein.

However, the theory of the free-boundary MHD equations are much less developed, and nearly all of the available results are focusing on the incompressible case. Actually, MHD equations are quite different from Euler's equations. The strong coupling between the velocity and the magnetic fields in MHD equations often produce extra difficulty. One key difference is the irrotationality assumption for Euler equations no longer hold for MHD. Hao-Luo \cite{hao2014priori} generalized the method developed by Christodoulou-Lindblad \cite{christodoulou2000motion} to incompressible ideal MHD, to get the a priori bounds under the physical sign condition \eqref{sign} and then Hao \cite{hao2017motion} generalized it to the plasma-vacuum model with nonvanishing magnetic field in vacuum. For the wellposedness result, Sun-Wang-Zhang \cite{sun2015well,sun2017well} proved the local wellposedness for the current-vortex sheet and plasma-vacuum model for incompressible MHD respectively under the non-colinearity condition $|B\times\hat{B}|\geq c_0>0$\footnote{The non-collinearity condition gives extra 1/2-order enhanced regularity of the free surface than Rayleigh-Taylor sign condition}. Lee \cite{leeMHD1,leeMHD2} proved the LWP of the 3D free-boundary viscous-resistive MHD equations with infinite and finite depth respectively. See also Padula-Solonnikov \cite{Solonnikov}. In Lee \cite{leeMHD2}, a local unique solution was obtained for the free-boundary ideal incompressible MHD equations by passing to vanishing viscosity-resistivity limit. By using tangential smoothing, Gu-Wang \cite{gu2016construction} proved the LWP of the incompressible MHD equations under the physical sign condition \eqref{sign}. Hao-Luo \cite{haoluo2019} proved the LWP of linearized incompressible MHD equations under the physical sign condition by generalizing Lindblad \cite{lindblad2002}. The author joint with C. Luo \cite{luozhang2019MHD2.5} proved a low regularity estimate. In the case of nonzero surface tension, the author joint with C. Luo \cite{luozhang2019MHDST} first proved the a priori estimates for the incompressible ideal MHD, which is the first step to establish the local existence. Besides, Chen-Ding \cite{chendingMHDlimit} obtained the inviscid limit for the free-boundary ideal incompressible MHD with or without surface tension. Wang-Xin \cite{wangxinMHD2} proved the global well-posedness of incompressible inviscid-resistive MHD. Guo-Ni-Zeng \cite{GuoMHDSTviscous} proved the decay rate of the solutions to viscous-resistive incompressible MHD.

The structure of free-boundary compressible MHD equations is much more delicate than both incompressible MHD equations and compressible Euler's equations due to the extra coupling of the magnetic fields and sound wave. Compared with free-boundary incompressible MHD equations, the top order derivative of the pressure $p$ and $\curl B$ loses control in the free-boundary compressible MHD equations. This does not appear in the incompressible case thanks to $\dive u=0$. On the other hand, compared with compressible Euler's equations, the presence of the magnetic field $B$ in the pressure term $\nabla(p+\frac{1}{2}|B|^2)$ destroys the control of the wave equation of $p$ which is obtained by taking divergence of the first equation in \eqref{CMHD}. This crucial difficulty does not appear in the study of the free-boundary compressible Euler's equations, of which the corresponding wave equation only contains lower order terms.

We first review the results in fixed-domain problems in compressible ideal MHD which is a quasilinear symmetric hyperbolic system with characteristic boundary conditions. Due the the failure of div-curl control mentioned above, even the linearized equation has a loss of normal derivative. Indeed, Ohon-Shirota \cite{MHDexample} constructed an explicit counterexample to prove the ill-posedness in $H^l(l\geq 2)$ for the linearized compressible MHD system. Instead, one may have to consider using anisotropic Sobolev spaces $H_*^m$ which was first introduced by Chen Shuxing \cite{CSX} to solve the hyperbolic system with characteristic boundary conditions. Yanagisawa-Matsumura \cite{MHDfirst} proved the LWP for the fixed domain problem and Secchi \cite{secchi1996,secchi1995} proved a refined result of no regularity loss in anisotropic Sobolev space $H_*^m (m\geq 16)$. As for the incompressible limit, Jiang-Ju-Li \cite{jiangMHDlimit1, jiangMHDlimit2} got the results for the weak solution in the whole space $\R^3$, but no higher order energy control. 

As for the free-boundary problem, Chen-Wang \cite{chengqMHD} and Trakhinin \cite{trakhininMHD2009} proved the existence of the current-vortex sheet for 3D compressible MHD. The only LWP results of the free-boundary problem of the plasma-vacuum model for compressible ideal MHD are Secchi-Trakhinin \cite{secchi2013well} and Trakhinin \cite{trakhininMHD2016} under the non-colinearity condition. To the best of our knowledge, there is NO available result on the free-boundary problem of compressible MHD equations under the physical sign condition \eqref{sign} before the presence of the first version\footnote{The first version of the presenting manuscript was announced on November 10, 2019} of this manuscript. Very recently, Trakhinin-Wang \cite{trakhininMHD2020} proved the LWP of compressible ideal MHD under Rayleigh-Taylor sign condition by using Nash-Moser. The author \cite{ZhangCRMHD2} proved the LWP of compressible resistive MHD under Rayleigh-Taylor sign condition as a continuation of the presenting manuscript.

In this paper, we obtain the a priori estimates and incompressible limit for the free-boundary problem of compressible MHD equations with magnetic diffusion from a geometric point of view introduce by Christodoulou-Lindblad \cite{christodoulou2000motion}. Our energy bound is also uniform in the sound speed $c:=\sqrt{p'(\rho)}$ and thus implies the incompressible limit. We will discuss the details in Section \ref{1.4} and Section \ref{1.5}.

\subsection{Energy conservation/dissipation and higher order energy}\label{1.3}

\subsubsection*{Energy conservation/dissipation}

First we would like to explain the energy conservation for compressible ideal MHD and the energy dissipation for the compressible resistive MHD, mentioned in the introduction.

In fact, for the ideal compressible MHD, if we set $Q(\rho)=\int_1^{\rho} p(R)/R^2 dR$, then we use \eqref{CMHD} to get
\begin{equation}\label{conserve1}
\begin{aligned}
&~~~~\frac{d}{dt}\left(\frac{1}{2}\idt\rho |u|^2\dx+\frac{1}{2}\idt|B|^2 \dx +\idt \rho Q(\rho)\dx\right)  \\
&=\idt \rho u\cdot D_tu\dx+\idt B\cdot D_t B\dx+\idt \rho D_t Q(\rho)\dx+\frac{1}{2}\idt \rho D_t(1/\rho)|B|^2\dx\\
&=\idt u\cdot (B\cdot\p B)\dx-\idt u\cdot\p P \dx+\idt B\cdot(B\cdot\p u)\dx-\idt |B|^2\dive u\dx \\
&~~~~+\idt p(\rho) \frac{D_t \rho}{\rho} \dx -\frac{1}{2}\idt \frac{D_t \rho}{\rho}|B|^2\dx.
\end{aligned}
\end{equation}

Integrating by part in the first term in the last equality, this term will cancel with $\idt B\cdot(B\cdot\p u)\dx$ because the boundary term and the other interior term vanishes due to $B=\mathbf{0}$ and $\dive B=0$ respectively. Also we integrate by parts in the second term and then use the continuity equation to get
\begin{equation}\label{conserve2}
\begin{aligned}
-\idt u\cdot\p P\dx&=\idt P\dive u\dx-\underbrace{\ipdt (u\cdot N)P dS}_{=0}=-\idt p\frac{D_t \rho}{\rho} \dx +\frac{1}{2}\idt |B|^2\dive u\dx\\
&=-\idt p\frac{D_t \rho}{\rho} \dx +\idt |B|^2\dive u\dx -\frac{1}{2}\idt |B|^2\dive u\dx \\
&=-\idt p\frac{D_t \rho}{\rho} \dx +\idt |B|^2\dive u\dx+\frac{1}{2}\idt \frac{D_t \rho}{\rho}|B|^2\dx.
\end{aligned}
\end{equation}

Summing up \eqref{conserve1} and \eqref{conserve2}, one can get the energy conservation for the free-boundary ideal compressible MHD:
\begin{equation}
\frac{d}{dt}\left(\frac{1}{2}\idt\rho |u|^2\dx+\frac{1}{2}\idt|B|^2 \dx +\idt \rho Q(\rho)\dx\right)  =0.
\end{equation} Also one can see this energy conservation coincides with the analogue for the free-boundary compressible Euler's equations in Lindblad-Luo \cite{lindblad2018priori}.

For the resistive compressible MHD as stated in \eqref{CMHD}, there will be one extra dissipation term, and one can integrate by part to get the energy dissipation.
\begin{equation}
\begin{aligned}
&~~~~\frac{d}{dt}\left(\frac{1}{2}\idt\rho |u|^2\dx+\frac{1}{2}\idt|B|^2 \dx +\idt \rho Q(\rho)\dx\right) \\
&=0+\lambda\idt B\cdot\Delta B \dx
=-\lambda\idt |\p B|^2 \dx<0.
\end{aligned}
\end{equation}
\bigskip
\subsubsection*{Higher order energy}

Now we introduce ``$Q$-tensor" to define the higher order energies. Let $Q$ be a positive definite quadratic form $Q$ on $(0,r)$-tensors, which is the inner product of the tangential components when restricted on the boundary, i.e.,
\begin{equation}\label{Qdef}
Q(\alpha,\beta)=\langle \Pi\alpha, \Pi\beta\rangle \text{   on }\p\DD_t,
\end{equation}where the projection of a $(0,r)$-tensor to the boundary is defined by
\begin{equation}\label{pidef}
(\Pi\alpha)_{i_1\cdots i_r}=\gamma_{i_1}^{j_1}\cdots\gamma_{i_r}^{j_r}\alpha_{j_1\cdots j_r},~~\text{ where } \gamma_i^j=\delta_i^j-N_iN^j,
\end{equation}and $N$ is the unit outer normal to $\p\DD_t$. To be more specific, we define
\begin{equation}\label{Qin}
Q(\alpha,\beta)=q^{i_1j_1}\cdots q^{i_rj_r}\alpha_{i_1\cdots i_r}\beta_{j_1\cdots j_r},
\end{equation}where 
\[
q^{ij}:=\delta^{ij}-\eta(d)^2 N^i N^j,~~d(x)=\text{dist}(x,\p\DD_t),~~N^i=-\delta^{ij}\p_j d.
\]Here $\eta$ is a smooth cut-off function satisfying $0\leq \eta(d)\leq 1$, and $\eta (d)=1$ when $d\leq d_0/4$; $\eta (d)=0$ when $d>d_0/2$, where $d_0$ is a fixed numer smaller than the injective radius $\iota_0$ of the normal exponential map, defined to be the largest number $\iota_0$ such that thet map:
\begin{equation}\label{inj rad}
\p\DD_t\times (-l_0,l_0)\to \{x:\text{dist}(x,\p\DD_t)<\iota_0\},
\end{equation}given by 
\[
(\bar{x},L)\mapsto x=\bar{x}+L N(\bar{x})
\] is an injection.

We propose the higher order energies to be 
\begin{equation}\label{Er}
E_r:=\sum_{s+k=r} E_{s,k}+K_r+W_{r+1}^2+H_{r+1}^2,~~\text{ and } E_r^*:=\sum_{r'\leq r}E_{r'}.
\end{equation} Here for $s\geq 1$
\begin{equation}\label{Esk}
\begin{aligned}
E_{s,k}(t)&=\frac{1}{2}\idt \rho Q(\p^s D_t^k u, \p^s D_t^k u)\dx +\frac{1}{2}\idt Q(\p^s D_t^k B, \p^s D_t^k B)\dx \\
&~~~~+\frac{1}{2}\idt \wk Q(\p^s D_t^k p, \p^s D_t^k p)\dx \\
&~~~~+\frac{1}{2}\ipdt Q(\p^s D_t^k P, \p^s D_t^k P)\nu~dS,
\end{aligned}
\end{equation}with $\nu:=(-\nabla_N P)^{-1}$ and 
\begin{equation}\label{E0r}
E_{0,r}(t)=\frac{1}{2}\idt \rho \wt |D_t^r u|^2\dx +\frac{1}{2}\idt |D_t^r B|^2\dx +\frac{1}{2}\idt \wk |D_t^r p|^2\dx, \\
\end{equation}
and
\begin{equation}\label{Kr}
K_r:=\idt\rho|\p^{r-1}\curl u|^2+|\p^{r-1}\curl B|^2\dx,
\end{equation}
\begin{equation}\label{Wr}
W_r:=\frac{1}{2}\left(\|\wt D_t^rp\|_{\lei}+\|\swt\nabla D_t^{r-1}p\|_{\lei}\right),
\end{equation}and
\begin{equation}\label{Hr}
H_r^2(t):=\int_0^t\left(\int_{\DD_{\tau}} |D_t^r B(\tau,x)|^2\dx\right)d\tau+\frac{\lambda}{2}\left\|\p D_t^{r-1} B\right\|_{\lei}^2.
\end{equation}

Here $H_r^2$ is the $r$-th order energy of the heat equation of $B$
\begin{equation}\label{heat00}
D_t B-\lambda\Delta B=B\cdot\p u-B\dive u,
\end{equation} and $W_r$ is the $r$-th order energy of the wave equation of $p$
\begin{equation}\label{wave00}
\rho'(p)D_t^2 p-\Delta p=B^k\Delta B_k+w,
\end{equation}where 
\begin{equation}\label{w00}
w=\left(\wk-\rho''(p)\right)(D_tp)^2+\wk\p_i p\left((B\cdot\p B_i)-\p_i P\right)+\rho\p_i u^k\p_k u^i-\p^i B_k\p_k B^i +|\p B|^2.
\end{equation} This wave equation is derived by taking divergence in the first equation of MHD system \eqref{CMHD}. 

\begin{rmk}
We note that the weight function in \eqref{E0r} and \eqref{Wr} is necessary for passing to the incompressible limit, otherwise there will be no control of $D_t^5 p$ uniform in the sound speed $c:=\sqrt{p'(\rho)}.$ When $B=\mathbf{0}$, our energy is exactly the energy functional for the free-boundary compressible Euler's equation in Lindblad-Luo \cite{lindblad2018priori}.
\end{rmk}

Although $E_r$ only contains the tangential components, it actually allows us to control all the components by the Hodge type decomposition
\[
|\p X|\lesssim|\bar{\p} X|+|\dive X|+|\curl X|.
\] The curl part can be controlled by $K_r$, while the divergence of $u$ can be controlled via the wave equation \eqref{wave00} of $p$ through the continuity equation $D_t\rho=\rho \dive u$ and $p=p(\rho)$. The energy of heat equation helps us to close the control of the wave equation, because the right hand side of \eqref{w00} contains a higher order term of $B$ which is out of control without the magentic resistivity term. The details will be discussed in Section \ref{1.5}. 

The boundary term in \eqref{Esk} and the choice of $\nu$ are constructed to exactly cancel a boundary term coming from integration by part in the interior. Besides, the tangential projection in the bundary term is necessary to make it be a lower order term. Indeed, since $P=p+\frac{1}{2}|B|^2=0$ on the boundary and so is $\Pi\p P=\bar{\p}P$, one has 
\[
\Pi\p^r P=O(\p^{r-1}P).
\]

The physical sign condition \eqref{sign} implies $|\nabla_N P|\geq \epsilon_0$ which allows us to control the regularity of the free boundary, i.e., the second fundamental form $\theta$:
\[
|\bar{\p}^{r-2}\theta|_{\leb}^2\lesssim\epsilon_0^{-1}E_r^*+\sum_{r'\leq r-1}|\p^{r'} P|_{\leb}^2
\]from 
\[
\Pi\p^r P=(\bar{\p}^{r-2}\theta)\nabla_N P+O(\p^{r-1}P)+O(\bar{\p}^{r-3}\theta).
\]

We will use the following notations throughout the rest of this paper:
\begin{itemize}
\item $\|f\|_{s,k} = \|\p^sD_t^k f\|_{\lei}$,
\item $|f|_{s,k}= |\p^sD_t^k f|_{\leb}$.
\end{itemize}

One can reduce the estimates of $Q$-tensor and $\curl$terms to the control of $\|\cdot\|_{s,k}$,$\|\cdot\|_{s,k+1}$ and $|\cdot|_{s,k}$ norms of $u, B, p$ with $s+k\leq r$, which can be further reduced to the control of wave and heat equations by elliptic estimates Proposition \ref{elliptic estimate}. Finally, we close the energy bound by controlling wave and heat equation. More detailed strategy will be discussed in Section \ref{1.5}.

\subsection{Main results}\label{1.4}

\begin{itemize}
\item \textbf{A priori estimates}

The first result in this paper is the a priori bound of the free-boundary compressible resistive MHD system \eqref{CMHD}.

\begin{thm}\label{priori00}
Suppose $0\leq r\leq 4$. Let $(u,B,p)$ be a solution\footnote{The local well-posedness of this problem is established very recently by the author \cite{ZhangCRMHD2}.} to the free-boundary MHD system \eqref{CMHD} together with the initial-boundary conditions $(u_0,B_0,p_0)\in H^4(\DD_0)\times H^5(\DD_0)\times H^4(\DD_0)$, \eqref{MHDI}, \eqref{MHDB} and compatibility conditions \eqref{cck} up to 6-th order\footnote{The reason for requiring 6-th order is that $D_t^6 p$ appears in the higher order wave equations.}. $E_r$ be defined as in \eqref{Er}. Then the following energy bound holds for $T>0:$
\begin{equation}\label{energy00}
E_r^*(T)-E_r^*(0)\lesssim_{K,M,c_0,\vol\DD_t,1/\epsilon_0,1/\lambda,E_{r-1}^*}\int_0^T \PP(E_r^*(t))\dt
\end{equation}for some polynomial $\PP$ with positive co-efficients under the a priori assumptions
\begin{equation}\label{asspriori}
\begin{aligned}
|\theta|+\frac{1}{\iota_0}&\leq K~~on~~\p \DD_t,\\
-\nabla_N P\geq \epsilon_0&>0~~~on~~\p\DD_t,\\
1\leq|\rho|&\leq M~~in~~\DD_t,\\
\sum_{s+k\leq 2}|\p^s D_t^k p|+|\p^s D_t^k B|+|\p^s D_t^k u|&\leq M~~in~~\DD_t.\\
\end{aligned}
\end{equation}
\begin{flushright}
$\square$
\end{flushright}
\end{thm}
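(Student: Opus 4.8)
\textbf{Proof strategy for Theorem \ref{priori00}.}

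The plan is to run a standard high-order energy argument adapted to the geometric framework of Christodoulou--Lindblad, with the new ingredient being the simultaneous treatment of the Lorentz force, the magnetic field, and the pressure wave via the magnetic diffusion. I would first differentiate the momentum equation $\rho D_t u = B\cdot\p B - \p P$ by $\p^s D_t^k$ with $s+k=r$, and pair against $\p^s D_t^k u$. The leading interior terms reorganize, after integration by parts, into $\frac{d}{dt}E_{s,k}$ plus controllable commutators; the boundary contribution from $\int \p^s D_t^k(\p P)\cdot \p^s D_t^k u$ is precisely where the weight $\nu = (-\nabla_N P)^{-1}$ and the boundary term $\frac12\ipdt Q(\p^s D_t^k P, \p^s D_t^k P)\nu\,dS$ are designed to absorb the dangerous piece, using that $\Pi\p P = \bar\p P = 0$ on $\p\DD_t$ so that $\Pi\p^r P = O(\p^{r-1}P) + (\bar\p^{r-2}\theta)\nabla_N P$ is effectively lower order modulo the second fundamental form. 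Simultaneously I differentiate the continuity equation / equation of state to convert $\dive u$ into $\wk D_t p$, which produces the $\frac12\idt \wk Q(\p^s D_t^k p, \p^s D_t^k p)$ term, and I track the curl energies $K_r$ by commuting $\curl$ through the momentum and induction equations (the Lorentz force $B\cdot\p B$ contributes a genuinely new term here compared to Euler, but it is at the level already counted in $H_{r+1}$ after an integration by parts using $B|_{\p\DD_t}=0$).

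Next, having the energy identities for $E_{s,k}$, $K_r$, I would close the system by feeding in the wave equation \eqref{wave00} for $p$ and the heat equation \eqref{heat00} for $B$. For the wave equation I apply $\wt D_t^r$ and $\swt\nabla D_t^{r-1}$ and run the usual energy estimate, obtaining $W_{r+1}$; the crucial point is that the source $w$ in \eqref{w00} contains $B^k\Delta B_k + |\p B|^2$, i.e. \emph{second} derivatives of $B$, which are out of reach for a pure hyperbolic estimate — this is absorbed using the parabolic gain in $H_{r+1}$, whose definition $H_r^2 = \int_0^t \|D_t^r B\|_{L^2}^2 + \frac{\lambda}{2}\|\p D_t^{r-1}B\|_{L^2}^2$ supplies exactly one extra spatial derivative of $B$ in $L^2_t L^2_x$ weighted by $\lambda$. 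For the heat equation I differentiate \eqref{heat00} by $D_t^{r-1}$, multiply by $D_t^r B$, and integrate by parts in space; the boundary term vanishes because of the imposed condition $B|_{\p\DD_t}=\mathbf 0$ and its $D_t$-consequences from the compatibility conditions \eqref{cck}, and the $-\lambda\Delta$ gives the coercive $\frac{\lambda}{2}\frac{d}{dt}\|\p D_t^{r-1}B\|_{L^2}^2$ together with the positive $\int_0^t\|D_t^r B\|^2$ after using $D_t^{r} B$ vs $D_t^{r-1}\Delta B = \Delta D_t^{r-1}B + [\text{commutator}]$. The commutators of $D_t$ with $\Delta$ and with $\p$ over a moving domain are the routine but lengthy part; all of them are bounded by $\PP(E_r^*)$ using the a priori assumptions \eqref{asspriori} together with the elliptic estimate Proposition \ref{elliptic estimate} and the Hodge decomposition $|\p X|\lesssim |\bar\p X| + |\dive X| + |\curl X|$ to upgrade tangential control in $E_r$ to full Sobolev control of $u,B,p$ at orders $s+k\le r$.

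Finally I would assemble: add the $E_{s,k}$ identities over $s+k=r$, add $K_r$, $W_{r+1}^2$, $H_{r+1}^2$, and also the lower-order $E_{0,r}$ identity, to get $\frac{d}{dt}E_r \lesssim \PP(E_r^*)$, then sum over $r'\le r$ and integrate in time to obtain \eqref{energy00}. The uniformity in the sound speed $c=\sqrt{p'(\rho)}$ must be checked at each step: the weights $\wk$, $\wt$, $\swt$ are all bounded by $c_0$ via \eqref{weight}, and the wave-equation energy $W_{r+1}$ is designed so that the $c\to\infty$ singular factor never appears — the weight $\wt$ on $D_t^r p$ and $D_t^r u$ in \eqref{E0r}, \eqref{Wr} is exactly what makes this work.

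\textbf{Main obstacle.} The hard part is the wave--heat coupling at top order: the wave equation for $p$ genuinely requires two derivatives of $B$ that no hyperbolic estimate provides, and the heat estimate for $B$ in turn involves $D_t$ acting many times, producing commutators with $\Delta$ on a moving domain that must be controlled without losing the $\lambda$-dependent parabolic smoothing. Making the bookkeeping close — so that the $B^k\Delta B_k$ term in $w$ is paid for by $H_{r+1}$, that the boundary terms from both equations cancel exactly against the $Q$-tensor boundary energy using $P|_{\p\DD_t}=0$ and $B|_{\p\DD_t}=\mathbf 0$, and that nothing blows up as $c\to\infty$ — is the crux of the argument.
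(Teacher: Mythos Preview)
Your proposal is correct and follows essentially the same route as the paper: $Q$-tensor/curl energy identities with the boundary weight $\nu=(-\nabla_N P)^{-1}$, reduction via Hodge decomposition and the elliptic/tensor estimates of Propositions~\ref{elliptic estimate}--\ref{theta estimate}, and then closing through the coupled wave--heat system for $p$ and $B$. The one place where the paper is slightly more explicit than your sketch is the top-order wave estimate: rather than vaguely ``absorbing $B^k\Delta B_k$ by the parabolic gain,'' the paper literally substitutes the differentiated heat equation $\lambda\Delta D_t^{r-1}B = D_t^r B - h_r-\hh_r-\hl_r$ into the $(r{+}1)$-th order wave equation \emph{before} doing the energy estimate (see \eqref{wavek+1}--\eqref{wk+1}), converting the dangerous $B\cdot\Delta D_t^{r-1}B$ into $\lambda^{-1}B\cdot D_t^r B$, whose $L^2_tL^2_x$ norm is exactly the first piece of $H_{r+1}$; and for the Lorentz force in the curl/$Q$-tensor estimates the mechanism is the elliptic estimate applied to $(B\cdot\p)B$ itself (which vanishes on $\p\DD_t$), not an integration by parts in the curl identity.
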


\begin{rmk}
In the a priori assumptions \eqref{asspriori}, the first bound gives us the control of the geometry of the free boundary $\p\DD_t$: The bound for $\theta$ actually gives the bound for the curvature of $\p\DD_t$; the lower bound for the injective radius $\iota_0$ of the exponential map characterizes how far away the surface is from self-intersection. All these a priori assumed quantites are controlled in Lemma \ref{justify0}.
\end{rmk}

\begin{rmk}
In \eqref{energy00}, one can apply the nonlinear Gronwall-type inequality introduced in Tao \cite{tao2006nonlinear} to conclude that, there exists a positive time $T(c_0,K,\EE(0),E_4^*(0),\volo)>0$, then any solution of \eqref{CMHD} in $t\in [0,T]$ satisfies 
\[
\sup_{0\leq t\leq T}E_r^*(t)\lesssim_{1/\lambda}2E_r^*(0).
\] See also Proposition \ref{complete}. Our a priori bound depends on $1/\lambda$. Hence, we cannot get the vanishing-resistivity limit by letting $\lambda\to 0$. The necessity of magnetic diffusion is discussed in Section \ref{1.5}. Therefore we can assume the magnetic diffusion constant $\lambda=1$ without loss of generality to discuss the incompressible limit.
\end{rmk}

\item \textbf{Incompressible limit}

From Theorem \ref{priori00}, one can use Gronwall-type argument to see our energy $E_r(t)$ is bounded by the initial data as long as the a priori quantities are bounded in $L^{\infty}$ norm. In fact, this energy bound remains valid uniformly as the sound speed $c^2:=p'(\rho)$ goes to infinity. We define $\kk:=p'(\rho)|_{\rho=1}$ to parametrize the sound speed. Under this setting, we denote the fluid velocity, density, the magnetic field and the pressure by $u_{\kk},\rho_{\kk},B_{\kk}$ and $p_{\kk}$ respectively in \eqref{CMHD}. We also assume the following holds for a fixed constant $c_0$
\[
|\rho^{(m)}_{\kk}(p_{\kk})|\leq c_0,\text{ and }~c_0^{-1}|\rho'_{\kk}(p_{\kk})|^m\leq|\rho^{(m)}_{\kk}(p_{\kk})|\leq c_0|\rho'_{\kk}(p_{\kk})|^m,~~for~1\leq m\leq 6,
\]and as $\kk\to\infty$,
\[
\rho_{\kk}(p_{\kk})\to 1,
\]which can be considered to be passing to the incompressible limit. The result is stated as follows (See also Theorem \ref{limit}).

\begin{thm}\label{limit00}
Let $v_0,B_0$ be two divergence free vector fields with $B_0|_{\p\DD_0}=0$ such that its corrsponding pressure $q_0$ defined by
\[
\Delta \left(q_0+\frac{1}{2}|B_0|^2\right)=-(\p_i v_0^k \p_kv_0^i)+(\p_i B_0^k)(\p_k B_0^i),~~p_0|_{\p\DD_0}=0,
\]satisfies the Rayleigh-Taylor physical sign condition 
\[
-\nabla_N \left(q_0+\frac{1}{2}|B_0|^2\right)\bigg|_{\p\DD_0}\geq \epsilon_0>0.
\] Let $(v,B,q)$ be the solution to the incompressible resistive MHD equations with data $(v_0,B_0)$, i.e.,
\begin{equation}\label{iMHD00}
\begin{cases}
D_t v=B\cdot\p B-\p (q+\frac{1}{2}|B|^2)~~~& \text{in}~\DD; \\
\dive v=0~~~&\text{in}~\DD; \\
D_t B-\Delta B=B\cdot\p v,~~~&\text{in}~\DD; \\
\dive B=0~~~&\text{in}~\DD,\\
q,B|_{\p\DD_0}=0~~~&~~~\\
(v,B)|_{t=0}=(v_0,B_0).~~~&~~~
\end{cases}
\end{equation}
Furthermore, let $(u_{\kk},B_{\kk},\rho_{\kk})$ be the solution to the compressible resistive MHD equations \eqref{CMHD} with density function $\rho_{\kk}(p)$ with initial data $(u_{0,\kk},B_0,\rho_{0,\kk})$ satisfying the compatibility condition up to $(r+1)$-th order (see \eqref{cck}) as well as the physical sign condition in \eqref{sign}. If we have $\rho_{0,\kk}\to\rho_0=\beta$ ($\beta$ is the constant density in the incompressible case, WLOG set $\beta=1$) and $u_{0,\kk}\to v_0$ such that $E_{r,\kk}^*(0)$ is uniformly bounded in $\kk$, then one has 
\[
(u_{\kk},B_{\kk},\rho_{\kk})\to(v, B, \beta).
\]
\begin{flushright}
$\square$
\end{flushright}
\end{thm}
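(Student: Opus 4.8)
The plan is to extract the limit from the uniform energy bounds of Theorem \ref{priori00} by a compactness-plus-identification argument, following the strategy of Lindblad--Luo \cite{lindblad2018priori} adapted to the resistive MHD system. The central point is that the a priori estimate \eqref{energy00}, combined with the assumed uniform bound $\sup_{\kk}E_{r,\kk}^*(0)<\infty$ and the nonlinear Gronwall inequality of Tao \cite{tao2006nonlinear}, yields a time $T>0$ independent of $\kk$ and a constant $C$ independent of $\kk$ with $\sup_{[0,T]}E_{r,\kk}^*(t)\leq C$. Since all the constants in \eqref{energy00} are uniform in the sound speed (this is exactly the reason the weighted quantities $E_{0,r}$ and $W_r$ were built into the energy), the bound does not degenerate as $\kk\to\infty$. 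From $E_{r,\kk}^*$ one reads off: uniform $H^r$-type bounds on $u_\kk,B_\kk$ and on $\sqrt{\rho'_\kk(p_\kk)}\,p_\kk$, uniform bounds on the free surface $\p\DD_{t,\kk}$ via the second fundamental form $\theta_\kk$ (hence uniform control of the flow maps $\Phi_\kk:\DD_0\to\DD_{t,\kk}$ in high Sobolev norm), and, crucially, the bound $\|\wt D_t p_\kk\|_{\lei}\lesssim W_{1,\kk}\leq C$ together with $\di u_\kk=-\rho'_\kk(p_\kk)/\rho_\kk\,D_t p_\kk$, which forces $\di u_\kk\to 0$ strongly.

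The steps, in order, are as follows. First, pull everything back by the flow map to the fixed domain $\DD_0$, so that the time derivatives become genuine $\p_t$ on a $\kk$-independent domain; the uniform energy bounds give $u_\kk,B_\kk$ bounded in $L^\infty([0,T];H^r(\DD_0))$ with $\p_t u_\kk,\p_t B_\kk$ bounded in lower norms, hence by Aubin--Lions a subsequence converges strongly in $C([0,T];H^{r-1}_{loc})$ to limits $v,B$; similarly $\Phi_\kk\to\Phi$ and the domains converge. Second, identify the incompressibility: $\rho_\kk\to 1$ uniformly (from the hypothesis $\rho_\kk(p_\kk)\to1$ and the uniform bounds on $p_\kk$) and $\di u_\kk\to0$ in $L^2$ as above, so $\di v=0$. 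Third, pass to the limit in the equations: the magnetic equation $D_t B_\kk-\Delta B_\kk=B_\kk\cdot\p u_\kk-B_\kk\di u_\kk$ passes to $D_t B-\Delta B=B\cdot\p v$ since $B_\kk\di u_\kk\to0$; the momentum equation $\rho_\kk D_t u_\kk=B_\kk\cdot\p B_\kk-\p(p_\kk+\tfrac12|B_\kk|^2)$ passes in the distributional sense, and because $\di v=0$ the limiting gradient term $\p(p_\kk+\tfrac12|B_\kk|^2)$ must converge to $\p(q+\tfrac12|B|^2)$ where $q$ is the pressure determined by the incompressible elliptic problem; one checks that the elliptic problem for $q$ is precisely the one stated (take divergence of the limiting momentum equation and use $\di v=0$, $\di B=0$, $B=0$ on the boundary), with boundary condition $q|_{\p\DD_t}=0$ inherited from $p_\kk=0$ on $\p\DD_{t,\kk}$ and $B_\kk=0$ there. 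Fourth, verify that the Rayleigh--Taylor condition survives: $-\nabla_N(p_\kk+\tfrac12|B_\kk|^2)\geq\epsilon_0$ uniformly, and by the strong convergence the limit satisfies $-\nabla_N(q+\tfrac12|B|^2)\geq\epsilon_0$, so $(v,B,q)$ is an admissible solution of \eqref{iMHD00}; finally invoke uniqueness for the free-boundary incompressible resistive MHD system to conclude the whole family (not just a subsequence) converges.

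The main obstacle is the pressure/sound-wave term. Unlike compressible Euler, here the gradient carries $p_\kk+\tfrac12|B_\kk|^2$, and only the combination — not $p_\kk$ alone — has clean uniform control near the boundary; one must argue that $\p p_\kk$ converges (weakly) by using the momentum equation to write $\p p_\kk=-\rho_\kk D_t u_\kk+B_\kk\cdot\p B_\kk-\tfrac12\p|B_\kk|^2$ and then deducing convergence of $p_\kk$ itself up to a constant, pinned down by $p_\kk|_{\p\DD_{t,\kk}}=0$. Care is also needed because the domains $\DD_{t,\kk}$ move with $\kk$: the boundary conditions $p_\kk=0$, $B_\kk=0$ are on $\p\DD_{t,\kk}$, so all trace arguments must be transported to $\DD_0$ first, and one must show the limiting trace conditions hold on $\p\DD_t=\lim\p\DD_{t,\kk}$, which again relies on the uniform control of $\theta_\kk$ and $\iota_{0,\kk}$ furnished by the a priori assumptions \eqref{asspriori} being propagated. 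Everything else — the Aubin--Lions compactness, passing to the limit in the nonlinear terms, and the identification of $q$ — is routine once these uniform bounds are in hand.
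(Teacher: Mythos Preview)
Your proposal is correct and follows essentially the same route as the paper: uniform-in-$\kappa$ energy bounds on a fixed reference domain (Lagrangian coordinates), compactness to extract a convergent subsequence, and passage to the limit in the equations using $\dive u_\kappa\to 0$ and $\rho_\kappa\to 1$. The only real difference is the compactness tool: you invoke Aubin--Lions in Sobolev scale, whereas the paper bounds $\|u_\kappa\|_{C^2}+\|B_\kappa\|_{C^2}+\|\rho_\kappa\|_{C^2}$ by $E_{4,\kappa}^*$ via Sobolev embedding, upgrades to H\"older control by Morrey, and applies Arzel\`a--Ascoli directly in $C^2([0,T];\Omega)$. Your version is slightly more robust (it yields convergence in a higher Sobolev norm and makes the passage to the limit in the nonlinear terms cleaner), and you are more careful than the paper about two points the paper glosses over: the identification of the limiting pressure $q$ via the momentum equation plus the boundary trace, and the use of uniqueness for the incompressible problem to upgrade subsequential to full-family convergence.
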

\begin{rmk}
The energy bounds are uniform with respect to the sound speed because it does not depend on the lower bound of any $\rho_{\kk}^{(m)}(p)$ which converges to 0 as $\kk\to\infty$. Also we note that, in our energy \eqref{Er}, only the highest order time derivative together with $\p D_t^4 p$ is assigned with the weight function $\wt$ or $\swt$, This together with Sobolev embedding theorem yields that the a priori quantities in \eqref{asspriori} also have $L^{\infty}$ bounds uniform in $\kk$ up to a fixed time, and thus the convergence of solutions to compressible MHD to incompressible MHD then follows.
\end{rmk}
\begin{rmk}
The density of the compressible system converges to the incompressible counterpart as the sound speed goes to infinity, but the pressure does not have analogous convergence. Instead, it should be the enthalpy $h(\rho):=\int_1^{\rho}\frac{p'(r)}{r}dr$ that converges to the pressure of incompressible system. See also \cite{lindblad2018priori}.
\end{rmk}
\item \textbf{Existence of the initial data satifying the compatibility conditions}

In Section \ref{section9}, we prove that  for every given divergence-free vector fields $v_0$ and $B_0$ with $B_0|_{\p\DD_0}=0$, there exists initial data $(u_{0,\kk},B_0,p_{0,\kk})$ satisfying the compatibility conditions \eqref{cck} when $\kk$ is sufficiently large, and also converges in our energy norm to the incompressible data as $\kk\to\infty$. Therefore, the incompressible limit exists.  

\begin{thm}
Let $(v_0,B_0,q_0)$ be the initial data for the incompressible resistive MHD equations defined in \eqref{iMHD00} with $v_0\in H^5$ and $B_0\in H^6$, $B_0|_{\p\DD_0}=0$. Then there exists initial data $(u_{0,\kk}, B_0,p_{0,\kk})$ satisfying the compatibility condition \eqref{cck} up to $6$-th order such that $(u_{0,\kk},\rho_{0,\kk})\xrightarrow{C^2} (v_{0},\beta)$ as $\kk\to\infty$, and $E_{r,\kk}^*(0)$ is uniformly bounded in $\kk$.
\end{thm}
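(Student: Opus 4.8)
The plan is to adapt the well-prepared-data construction of Lindblad--Luo \cite{lindblad2018priori} for the free-boundary compressible Euler equations to the resistive MHD system \eqref{CMHD}: keep $B_0$ fixed and produce $(u_{0,\kk},\rho_{0,\kk})$ as an $O(\kk^{-1})$ perturbation of $(v_0,1)$ whose time-Taylor expansion at $t=0$ is anchored to the initial material derivatives of the incompressible solution of \eqref{iMHD00}. Throughout write $\kk=p'(\rho)|_{\rho=1}$, so that $\wt=\rho'(p)=p'(\rho)^{-1}\sim\kk^{-1}$, $\wk\sim\kk^{-1}$, $\swt\sim\kk^{-1/2}$, and recall the enthalpy $h(\rho)=\int_1^\rho p'(r)/r\,dr$, for which $\nab h=\rho^{-1}\nab p$ and $h(\rho_{0,\kk})\to q_0$.

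First I would convert the compatibility conditions \eqref{cck} into conditions on the spatial jets of the data on $\p\DD_0$. Using the continuity equation in the form $\dive u=-\wk\,D_t p$ and $p=p(\rho)$, the condition $D_t^j p|_{\p\DD_0}=0$ is, modulo terms of strictly lower order in $D_t$, equivalent to $D_t^{j-1}(\dive u)|_{\p\DD_0}=0$; substituting the momentum equation $D_t u=\rho^{-1}(B\cdot\p B-\p P)$ and the heat equation $D_t B=\lambda\Delta B+B\cdot\p u-B\dive u$ repeatedly, each $D_t^{j-1}(\dive u)|_{t=0}$ becomes a universal polynomial in $\nab^\bullet(u_0,B_0,\rho_0)|_{\p\DD_0}$. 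Likewise $D_t^j B|_{\p\DD_0}=\mathbf{0}$ reduces, using $B_0|_{\p\DD_0}=\mathbf{0}$, to a hierarchy relating $\nabla_N^m B_0|_{\p\DD_0}$ to the normal jet of $u_{0,\kk}$ on $\p\DD_0$; these coincide, up to $O(\kk^{-1})$, with the incompressible compatibility conditions already satisfied by $(v_0,B_0)$ as data for \eqref{iMHD00}, the remaining discrepancy being absorbed by the corrections below.

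Next I would fix $u_{0,\kk}=v_0$ to leading order and determine the leading profile of $\rho_{0,\kk}$ by solving the elliptic problem (quasilinear in $\rho_0=\rho(P_0-\tfrac12|B_0|^2)$) for $P_0:=p_0+\tfrac12|B_0|^2$,
\[
\dive\Big(\tfrac1{\rho_0}\nab P_0\Big)=\dive\Big(\tfrac1{\rho_0}B_0\cdot\p B_0\Big)-\p_i v_0^k\p_k v_0^i\q\text{in }\DD_0,\q P_0|_{\p\DD_0}=0,
\]
which for $\kk$ large is a small perturbation of $\Delta P_0=\Delta Q_0$ with $Q_0:=q_0+\tfrac12|B_0|^2$ and is solved by contraction; this gives $P_0\to Q_0$, $\rho_{0,\kk}=1+O(\kk^{-1})$ in $H^4(\DD_0)$, $D_t(\dive u)|_{t=0}=0$, and, for $\kk$ large, the Rayleigh--Taylor condition $-\nabla_N P_0\ge\epsilon_0$ from the corresponding assumption on $Q_0$. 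Then, following \cite{lindblad2018priori}, I would add finitely many further corrections: potentials $\psi^{(\ell)}$ solving elliptic problems with homogeneous boundary data (hence of size $O(\kk^{-\ell})$) adjusting $\dive u_{0,\kk}$ in the interior, together with corrections supported in the collar \eqref{inj rad} adjusting the normal jets of $u_{0,\kk}$ and $\rho_{0,\kk}$ on $\p\DD_0$, all determined recursively so that (a) $D_t^j p|_{\p\DD_0}=0$ and $D_t^j B|_{\p\DD_0}=\mathbf{0}$ hold exactly for $0\le j\le 6$, and (b) the initial material derivatives $D_t^\ell(u,B,p)|_{t=0}$ computed from the equations match those of the incompressible solution up to $O(\kk^{-1})$ for every $\ell$ occurring in \eqref{Er}. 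Each coefficient is realized as a function on $\DD_0$ by a Borel/Seeley-type extension in normal coordinates near $\p\DD_0$ plus a cut-off in the interior, with size controlled by $\|v_0\|_{H^5}$ and $\|B_0\|_{H^6}$; one checks that $v_0\in H^5$, $B_0\in H^6$ is exactly the regularity that makes the top ($j=6$) conditions meaningful and puts the resulting jets in the right spaces, by the Sobolev trace theorem on $\p\DD_0$. From the $O(\kk^{-1})$ bounds and Sobolev embedding, $u_{0,\kk}\xrightarrow{C^2}v_0$ and $\rho_{0,\kk}\xrightarrow{C^2}1$.

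The hard part will be the last step: substituting the $D_t^\ell(u,B,p)|_{t=0}$ into \eqref{Er}--\eqref{Hr} and verifying $E^*_{r,\kk}(0)=O(1)$ uniformly in $\kk$. The weights $\wt\sim\kk^{-1}$, $\swt\sim\kk^{-1/2}$ are critical: each $D_t$ acting on the pressure through the momentum--continuity loop can a priori generate one factor of the squared sound speed $\kk$ — which is precisely why the weighted heat and wave energies $H_{r+1}$, $W_{r+1}$ are built into \eqref{Er} — so an uncalibrated choice of data would give, say, $D_t^3 p|_{t=0}=O(\kk)$ and hence $\int_{\DD_0}\wk|D_t^3 p|^2\dx=O(\kk)$. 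The well-prepared choice in the previous step — anchoring the data's time-Taylor expansion to the incompressible solution, for which every $D_t^\ell q$ vanishes on $\p\DD_0$ and the motion is genuinely divergence-free — is what cancels these powers of $\kk$, so that $\int_{\DD_0}\wk|D_t^\ell p|^2\dx$, $W_{r+1}^2=\tfrac14\big(\|\wt D_t^{r+1}p\|_{L^2(\DD_0)}+\|\swt\nab D_t^r p\|_{L^2(\DD_0)}\big)^2$ and $H_{r+1}^2$ stay $O(1)$. Carrying out this bookkeeping — while keeping the $p$- and $B$-compatibility exact, the sign condition valid, and the convergence in $C^2$ — is where essentially all the work lies; once done the theorem follows, and together with Theorem \ref{priori00} it furnishes the uniform-in-$\kk$ a priori bound that drives the incompressible limit.
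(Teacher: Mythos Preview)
Your outline is plausible but takes a genuinely different route from the paper. The paper does \emph{not} anchor the construction to the time-Taylor expansion of the incompressible solution, nor does it use collar corrections or a Borel/Seeley extension. Instead it treats the successive material derivatives $p_k:=D_t^k p|_{t=0}$ and $B_k:=D_t^k B|_{t=0}$ (for $1\le k\le 3$) themselves as unknowns, together with $p_0$ and a scalar potential $\phi$ (setting $u_{0,\kk}=v_0+\nab\phi$), and solves the coupled elliptic system obtained by writing the $(k{+}1)$-st order heat and $(k{+}2)$-nd order wave equations at $t=0$ as Poisson equations $\Delta B_k = B_{k+1}+\cdots$, $\Delta p_k=\kk^{-1}p_{k+2}+\cdots$ with Dirichlet data $p_k|_{\p\Omega}=0$, $B_k|_{\p\Omega}=\mathbf{0}$ and Neumann data on $\phi$. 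The system is closed by simply prescribing $p_4=p_5=0$, $B_4=B_5=\mathbf{0}$. Compatibility is then automatic (it is the Dirichlet condition), and the uniform bound $m_*:=\sum_k\|p_k\|_{H^{5-k}}+\|B_k\|_{H^{5-k}}+\|u_0\|_{H^5}\lesssim \kk^{-1}\PP(m_*)+\PP(\|v_0\|_{H^5},\|B_0\|_{H^6})$ follows from standard elliptic estimates, since $\kk^{-1}$ only multiplies coupling terms. In particular there is no separate ``cancellation of powers of $\kk$'' bookkeeping: because $p_4=p_5=0$ and the $p_k$ for $k\le 3$ solve Poisson equations with $O(1)$ sources, one gets $p_k=O(1)$ (not $O(\kk^k)$) directly, and $E^*_{r,\kk}(0)\lesssim m_*$ is immediate.

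Your approach could be made to work, but it imports machinery the paper avoids (collar extensions, Seeley), and the step where you claim the incompressible compatibility conditions are ``already satisfied by $(v_0,B_0)$'' up to $O(\kk^{-1})$ is exactly where the paper's method is cleaner: rather than matching the incompressible $D_t^\ell B|_{t=0}$ and then correcting, the paper just solves for $B_k$ with $B_k|_{\p\Omega}=\mathbf{0}$ imposed. The reduction to $\|B_0\|_{H^6}$ comes only at the very end, via $B_1=\Delta B_0+B_0\cdot\p u_0+\kk^{-1}B_0 p_1$, which is why $B_0\in H^6$ (two more derivatives than $p_0$) is needed.
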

\begin{flushright}
$\square$
\end{flushright}

\end{itemize}
\subsection{Illustration on Strategies and Difficulties}\label{1.5}
In this part, we would like to introduce our basic strategies in our proof. In particular, we will point out the essential difficulty of ideal compressible MHD, and thus the necessity of magnetic resitivity in this paper. We generalize the method in Lindblad-Luo \cite{lindblad2018priori}, but our model is very different from the free-boundary compressible Euler's equations due to the presence of $B$, the strong coupling among $B$ and $u, p$ and the presence of magnetic diffusion. Therefore, new ideas are needed to avoid the essential difficulty by utilizing the magnetic diffusion in a suitable way. These also tell the crucial difference between compressible MHD equations and Euler equations/incompressible MHD equations.

\bigskip

\noindent\textbf{Difficulty in ideal compressible MHD and necessity of magnetic diffusion}

The magnetic diffusion is necessary in our proof. We illustrate this by showing the difficulties in the study of compressible ideal MHD.

\begin{itemize}
\item \textbf{Difference from the free-boundary compressible Euler's equations: $(r+1)$-th order wave equation is out of control}

The highest order energy $E_4$ (i.e. $r=4$) contains the 5-th order energy $W_5^2$ of wave equations of $p$, which also appears in the energy of compressible Euler's equation (see Lindblad-Luo \cite{lindblad2018priori}). To bound $D_t^5 p$ and $\p D_t^4 p$, we need to take $D_t^4$ on both sides of \eqref{wave00} and study the 5-th order wave equation
\begin{equation}\label{waverr}
\wt D_t^6 p-\Delta D_t^4 p=B\cdot \Delta D_t^4 B+\cdots,
\end{equation} where the omitted terms are all of $\leq 5$ derivatives (see \eqref{wavek+10}). The control of this wave equation requires the $L^2$ norm of $\Delta D_t^4 B$. But for compressible ideal MHD, $B$ only satifies a transport equation and thus one cannot expect to enhance the regularity of $B$. This difficulty does not appear in the control of the free-boundary compressible Euler's equations, of which the corresponding wave equation \eqref{waverr} only contains $\leq 5$-th order terms on the right hand side (see Lindblad-Luo \cite{lindblad2018priori}, Section 4).

However, if we add magnetic diffusion on $B$, i.e., the equation of $B$ is modified to be
\[
D_t B-\lambda\Delta B=B\cdot\p u-B\dive u=B\cdot\p u+B\wk D_t p,~~\lambda>0\text{ is a constant},
\] and thus
\begin{equation}\label{heatrr}
D_t^5 B-\lambda\Delta D_t^4 B=B\cdot D_t^4 u+B\wk D_t^5 p+\cdots,
\end{equation} then we can plug \eqref{heatrr} into \eqref{waverr} to exactly eliminate the problematic term $B\cdot \Delta D_t^4 B$ in \eqref{waverr}.  The detailed computation is shown in Section \ref{section7}.

\item \textbf{Difference from both compressible Euler and incompressible MHD: $\curl B$ loses control}

Another crucial difference is that the control of $\curl B$ also contains a higher order term $\|\wk \p^4 D_t p\|_{\lei}$ which also requires the energy estimates of 5-th order wave equation after using elliptic estimates Proposition \ref{elliptic estimate}. This difficulty does not appear in the case of incompressible MHD (see Hao-Luo \cite{hao2014priori}, Gu-Wang \cite{gu2016construction}) due to $\dive u=0$ for incompressible MHD. Indeed, if there is no magnetic diffusion, i.e., for compressible ideal MHD, one has
\begin{equation}\label{curlcancel}
\begin{aligned}
\frac{d}{dt} K_4&=\frac{d}{dt}\idt\rho|\p^3\curl u|^2+|\p^3\curl B|^2\dx \\
&=\idt \p^3\curl (\rho D_t u)\cdot\p^3\curl u\dx+\idt \p^3\curl (D_t B)\cdot \p^3\curl B\dx +\cdots\\
&=\idt \p^3\curl (B\cdot\p B)\cdot\p^3\curl u\dx-\idt \underbrace{\p^3\curl (\p P)}_{=0}\cdot\p^3\curl u\dx \\
&~~~~+\idt \p^3\curl(B\cdot \p u)\cdot\p^3\curl B\dx+\idt \p^3\curl\left(B\wk D_tp\right)\cdot\p^3\curl B\dx+\cdots.
\end{aligned}
\end{equation} The first term on the third line will cancel the first term on the fourth line after integration by parts, up to some commutators that can be controlled. However, the last term requires the bound of $\wk\p^4 D_t p$ which is out of control. Such derivative loss is from normal derivative (div-curl decomposition reduces the normal derivatives to div and curl) and necessarily appears in compressible ideal MHD due to the coupling between magnetic field and sound wave.  One can recall the derivation of energy conservation that the term $-\frac12\idt |B|^2\dive u$ is cancelled by part of $-\idt u\cdot\nabla P$. But taking the curl eliminates the counterpart of $-\idt u\cdot\nabla P$ \textbf{before} such cancellation is produced because of $\curl\nabla P=0$. On the other hand, if we taking tangential derivatives instead of curl, then analogous cancellation is still preserved. 

\begin{rmk}
Secchi \cite{secchi1996,secchi1995} proved the LWP for the fixed-domain problem without loss of regularity in $H_*^m (m\geq 16)$, but these results are quite difficult to be applied to free-boundary problems because the free surface introduces extra derivative loss in anisotropic Sobolev spaces. So far, there is no available result proving the energy estimates without loss of regularity for the free-boundary compressible ideal MHD system.
\end{rmk}
\end{itemize}
\bigskip

\noindent\textbf{Strategy of energy estimates}

Our proof of the a priori bounds can be mainly divided into several steps: $Q$-tensor and curl estimates, boundary tensor estimates, interior and boundary elliptic estimates and the control of wave and heat equations. Important steps and illustrations are pointed out as follows, as well as in the summarizing diagram \eqref{process}.

\begin{itemize}

\item \textbf{Key observation: Magnetic diffusion together with elliptic estimates directly controls the Lorentz force}

After introducing magnetic diffusion, we should not seek for cancellation to eliminate the higher order terms which are exactly the space-time derivatives of $(B\cdot\p)B$ in $Q$-tensor and curl estimates. Notice that $(B\cdot\p)B$ vanishes on the boundary, one can apply the elliptic estimates Proposition \ref{elliptic estimate} and then plug the heat equation of $B$ to reduce to one order lower. For example, one can first reduce $\|\p^4(B\cdot\p)B\|_{\lei}$ to $\|\Delta((B\cdot \p)B)\|_{2,0}$. Then plugging $\lambda\Delta B=D_t B-(B\cdot\p)\dive u+B\dive u$ into $\|\Delta((B\cdot \p)B)\|_{2,0}$ to further reduce to the control of 4-th order derivatives. This observation is quite crucial to the whole proof: In fact $\|\p^5 B\|_{\lei}$ is out of control even if we use the magnetic diffusion, because the elliptic estimate of $\p^5 B$ requires the bound of $|\cnab^3 \theta|_{L^{\infty}(\p\DD_t)}$ which is impossible to be bounded. Our proof shows that the higher order spatial derivatives of $B$ must fall on the Lorentz force $(B\cdot\p)B$ so that we avoid the difficulty mentioned above.

\item\textbf{Boundary energies}

In the control of boundary energies, we will get
\begin{equation}\label{tgbdry00}
\ipdt Q(\p^s D_t^k P, \p^sD_t^k(D_t P)-\p_i P\p^s D_t^k u^i-\nu^{-1}N_i \p^s D_t^ku^i) dS+\cdots
\end{equation} So we choose $\nu$ to be $-(\nabla_N P)^{-1}$ in order to exactly cancel the leading order term on the boundary. Hence, the boundary control will be reduced to $|\Pi\p^s D_t^k P|_{\leb}$ and $|\Pi\p^s D_t^{k+1} P|_{\leb}$ which can be controlled by tensor estimates Proposition \ref{tensor estimate} and Proposition \ref{theta estimate}. This step also illustrates the importance of Rayleigh-Taylor sign condition in the free-boundary problem.

\item\textbf{Control of $W_{r'}^2+H_{r'}^2$ for $r'\leq r$: Bound all terms with $\leq r$ derivatives by $E_r^*$}

After using elliptic estimates and tensor estimates, the control of all the terms with $\leq r$ derivatives together with the tangential projection terms has been reduced to the control of $W_{r'}^2+H_{r'}^2$ for $r'\leq r$. Direct computation in Section \ref{section6} shows that $E_4^*$ together with $\|\p^{s-2}\Delta D_t^{k+1} B\|_{\lei},\|\p^{s-2}\Delta D_t^{k+1} p\|_{\lei}$. The latter terms will be controlled by $W_{r+1}^2+H_{r+1}^2$ as stated below.

\item\textbf{Control of $W_{r+1}^2+H_{r+1}^2$}

As mentioned above, one can reduce all the estimates to the control of wave equation of $p$ and the heat equation of $B$. With magnetic diffusion, one can simplified $\Delta D_t^k B$ to the terms with 1 lower order derivatives, and thus we can seek for the control of wave equation. In fact, the RHS of $k$-th order heat equation contains $D_t^k p$ as well as other $k$-th derivative of $p$, and the RHS of $k$-th order wave equation contains $k$-th derivative of $p$. Therefore we can try to find a common control for $W_{r+1}^2+H_{r+1}^2$ by the time integral of itself plus other terms in $\sqrt{E_r^*}$. The detailed computation are shown in Section \ref{section7}.
\end{itemize}

Our basic idea and process to close the energy estimates is briefly summarized in the following diagram.

\begin{equation}\label{process}
\begin{tikzcd}
& E_r \arrow[dr, "\text{consists of}", "~~"' sloped]\arrow[dl, "\text{consists of}", "~~"' sloped] \\ 
 E_{s,k}+K_r  \arrow[dd, "\text{reduced to}"', "~~" ] \arrow[ddr, "\text{reduced to}"', "~~"sloped ]\arrow[ ddrr,bend left, "\text{reduced to}"', "~~"sloped ]&  & W_{r+1}^2+E_{r+1}^2 \arrow[ddd,bend left,"\text{controlled by}","~"]  \\
 & \p^{s-2}\Delta D_t^{k+1}B,\p^{s-2}\Delta D_t^{k+1}p \arrow[ur,"\text{reduced to}",]& & &  \\ 
\|u\|_{r,0}, \|B\|_{r,0}  \arrow[dd, "\text{div-curl}"', "~~" ] & \|\cdot\|_{s,k}, \|\cdot\|_{s,k+1}\text{ of }B,~p,~\p^sD_t^k (B\cdot\p B) \arrow[ddl, "\text{elliptic}",  ] \arrow[d, "\text{elliptic}",  ]\arrow[u,"\text{elliptic}","~"]  &\Pi\p^s D_t^{k+1} P  \arrow[dl, "~~",  "\text{reduced to}"] \arrow[d, "\text{tensor estimate}",  ]   \\
 & \Pi\p^s D_t^{k+1} B,\Pi\p^s D_t^{k+1} p \arrow[r, "~~",  "\text{tensor estimate}"] &   E_r^* \arrow[d]  \\ E_r^* \arrow[rr]  & & \text{Closed}
\end{tikzcd}
\end{equation}

\begin{center}
Diagram \eqref{process}: Illustration on our basic idea and process to do the a priori estimates.
\end{center}

\noindent\textbf{Incompressible limit}

Our a priori estimate in Proposition \ref{total} is uniform in the sound speed because it does not depend on the lower bound of $\wt$ which converges to 0 as the sound speed goes to infinity. We remark here that the choice of weight function in \eqref{Er} comes from the control of 5-th order wave equation \[
\wt D_t^6p-\Delta p=\frac12\Delta|B|^2+\cdots,
\]whose $L^2$-control should be established by multiplying $\wt D_t^5 p$ instead of $D_t^5 p$. Otherwise the LHS only gives the energy term $\|\swt D_t^5 p\|_0$ but RHS needs the control of $\|D_t^5 p\|_0$.
\bigskip

\noindent\textbf{Outline of this paper}

This paper is organised as follows: Section \ref{section2} and Section \ref{section3} are preliminaries on Lagrangian coordinates, elliptic estimates and tensor estimates. In Section \ref{section4} we reduce the $Q$-tensor estimates and curl estimates to the control of $\|\cdot\|_{s,k}$ norm of $u,~B,~p$ and higher order interior terms together with the boundary term \eqref{tgbdry00}. Then in Section \ref{section5} we use elliptic estimates to reduce the estimates further to the control of heat/wave equations, which is done for $\leq 4$-th order in Section \ref{section6} and for 5-th order in Section \ref{section7}. Finally, in Section \ref{section8} we summarize all of the estimates to obtain the a priori bound which is also uniform in the sound speed, and then construct the initial data satisfying the compatibility conditions to obtain the incompressible limit in Section \ref{section9}. One can also understand our idea and basic process of the energy control through the above diagram \eqref{process}.

\section{Preliminaries on Lagrangian coordinates}\label{section2}

In this section, we are going to introduce Lagrangian coordinates which reduces the free-boundary problem in $\R^n$ to an equivalent problem in a fixed domain with metric evolving as time goes. To be specific, let $\Omega$ be the unit ball in $\R^n$, and let $f_0:\Omega\to\DD_0$ be a diffeomorphism. Then the Lagrangian coordinate $(t,y)$ where $x=x(t,y)=f_t(y)$ are given by solving
\begin{align}
\frac{dx}{dt}=u(t,x(t,y)), \q x(0,y)=f_0(y),\q y\in\Omega. \label{change}
\end{align}
The boundary becomes fixed in the new coordinate, and we introduce the notation
\begin{align}
D_t = \frac{\p}{\p t}\bigg|_{y=\text{constant}} = \frac{\p}{\p t}\bigg|_{x=\text{constant}} + u^k\frac{\p}{\p x^k}. \label{lag}
\end{align}
to be the material derivative and
\begin{align*}
\p_i = \frac{\p}{\p x^i} = \frac{\p y^a}{\p x^i}\frac{\p}{\p y^a}.
\end{align*}
Due to (\ref{lag}), we can also consider the material derivative $D_t$ as the time derivative by slightly abuse of terminology.\\

 Sometimes it is convenient to work in the Eulerian coordinate $(t,x)$, and sometimes it is easier to work in the Lagrangian coordinate $(t,y)$. In the Lagrangian coordinate the partial derivative $\p_t=D_t$ has more direct significance than it in the Eulerian frame. However, this is not true for spatial derivatives $\p_i$. Instead, the ``suitable" spatial derivative to characterize the motion of the fluid is the covariant differentiation with respect to the metric $g_{ab}(t,y)=\delta_{ij}\frac{\p x^i}{\p y^a}\frac{\p x^j}{\p y^b}$ assigned to $\Omega$. 

Here we mention that covariant derivative is not involved in our imposed energy function. Instead, we use the standard Eulerian spatial derivatives. We will work mostly in the Lagrangian coordinate in this paper. However, our statements are coordinate independent.\\

The Euclidean metric $\delta_{ij}$ in $\DD_{t}$ induces a metric
\begin{align}
g_{ab}(t,y)=\delta_{ij}\frac{\p x^i}{\p y^a}\frac{\p x^j}{\p y^b}, \label{g}
\end{align}
in $\Omega$ for each fixed $t$. We will denote covariant differentiation in the $y_{a}$-coordinate by $\nab_a$, $a=1,\cdots,n$, and the differentiation in the $x_i$-coordinate by $\p_i$, $i=1,\cdots,n$. Here, we use the convention that differentiation with respect to Eulerian coordinates is denoted by letters $i,j,k,l$ and with respect to Lagrangian coordinate is denoted by $a,b,c,d$.

\indent The regularity of the boundary is measured by that of the normal: Let $N^a$ to be the unit normal to $\p\Omega$, ie.e, $g_{ab}N^aN^b=1,$ and let $N_a=g_{ab}N^b$ denote the unit conormal, $g^{ab}N_aN_b=1$. The induced metric $\gamma$ on the tangent space to the boundary $T(\p\Omega)$ extended to be $0$ on the orthogonal complement in $T(\Omega)$ is given by
\[
\gamma_{ab}=g_{ab}-N_aN_b,\q \gamma^{ab}=g^{ac}g^{bd}\gamma_{cd}=g^{ab}-N^aN^b.
\]
The orthogonal projection of an $(0,r)$ tensor $S$ onto the boundary is given by
\[
(\Pi S)_{a_1,\cdots,a_r}=\gamma_{a_1}^{b_1}\cdots\gamma_{a_r}^{b_r}S_{b_1,\cdots,b_r},
\]
where $\gamma_{a}^{b}=g^{bc}\gamma_{ac}=\delta_{a}^{b}-N_aN^b$. In particular, the covariant differentiation on the boundary $\overline{\nab}$ is given by
\[
\overline{\nab}S=\Pi \nab S.
\]
We note that $\overline{\nab}$ is invariantly defined since the projection and $\nab$ are. The second fundamental form of the boundary, denoted by $\theta$, is given by $\theta_{ab}=(\cnab N)_{ab}$, and the mean curvature of the boundary $\sigma=tr\theta=g^{ab}\theta_{ab}$.

It is now important to compute time derivative of the metric $D_tg$, the normal $D_tN$, as well as the time derivative of corresponding measures.

\lem \label{Lemma 1.1}
Let $x=f_t(y)=x(t,y)$ be the change of variable given by
\begin{equation}\label{lagrangiancoordinates}
\frac{dx}{dt}=u(t,x(t,y)), \q x(0,y)=f_0(y),\q y\in\Omega,
\end{equation}
and $g_{ab}(t,y)=\delta_{ij}\frac{\p x^i}{\p y^a}\frac{\p x^j}{\p y^b}$ to be the induced metric. In addition, we let $\gamma_{ab} = g_{ab}-N_a N_b$, where $N_a= g_{ab}N^b$ is the co-normal to $\p\Omega$. Now we set
\begin{align}
u_a(t,y) &= u_i(t,x)\frac{\p x^i}{\p y^a}, \q u^a = g^{ab}u_b,\\
d\mu_g:&\text{ The volume unit with respect to the metric}~g ,\\
d\mu_\gamma:&\text{ The surface area unit with respect to the metric}~\gamma.\label{dmu}
\end{align}
Then the following result holds
\begin{align}
D_t g_{ab} &= \nab_a u_b+\nab_b u_a,\label{Dtg}\\
D_t g^{ab} &= -g^{ac}g^{bd}D_tg_{cd},\label{Dtg_inverse}\\
D_t N_a& = -\frac{1}{2}N_a(D_tg^{cd})N_cN_d,\label{DtN}\\
D_t d\mu_g &= \di u\,d\mu_g\label{dg},\\
D_t d\mu_\gamma &=(\sigma u \cdot N)\,d\mu_\gamma. \label{T2'}
\end{align}
\begin{proof} We only briefly state the sketch of the proof. Actually these results all come from direct computation, of which the details can be found in \cite{lindblad2018priori}, Section 2.

The fact that $D_t$ commutes with $\p_y$ together with $D_t x(t,y) = u(t,y)$ yields that
\[
D_t\frac{\p x^i}{\p y^a} = \frac{\p u_i}{\p y^a} = \frac{\p x^k}{\p y^a}\frac{\p u_i}{\p x^k},
\]
and thus
\[
D_t g_{ab} = \sum_{i}D_t\bigg(\frac{\p x^i}{\p y^a}\frac{\p x^i}{\p y^b}\bigg)= \frac{\p x^k}{\p y^a}\frac{\p u_i}{\p x^k}\frac{\p x^i}{\p y^b}+\frac{\p x^i}{\p y^a}\frac{\p x^k}{\p y^b}\frac{\p u_i}{\p x^k} = \nab_a u_b+\nab_bu_a.
\]

\eqref{Dtg_inverse} follows from $0=D_t(g^{ab}g_{bc})=D_t(g^{ab})g_{bc}+g^{ab}D_tg_{bc}$, and \eqref{dg} follows since in local coordinate we have $d\mu_{g} = \sqrt{\detg}\,dy$ and $D_t\detg = (\detg) g^{ab}D_t g_{ab} = 2{\detg}\, \dive u$. To prove \eqref{DtN}, we choose the local foliation $f$ so that $\p\Omega=\{y:f(y)=0\}$ and $f<0$ in $\Omega$, then
\[
N_a=\frac{\p_a f}{\sqrt{{g^{cd}}\p_cf\p_df}},
\]
and \eqref{DtN} follows from direct computation.

Now, \eqref{DtN} together with $d\mu_\gamma = \frac{\sqrt{\det g}}{\sqrt{\sum N_n^{2}}}\,dS(y)$ implies $D_t d\mu_\gamma = \dive u+\frac{1}{2}(D_tg^{cd})N_cN_d,$ where $dS(y)$ is the Euclidean surface measure.

To prove \eqref{T2'}, one first uses $\dive u=g^{ab}D_tg_{ab}/2$ together with \eqref{Dtg} and \eqref{Dtg_inverse} to obtain
\[
D_t d\mu_\gamma = \frac{1}{2}g^{ab}D_tg_{ab}-\frac{1}{2}(D_tg_{ab})N^aN^b=\gamma^{ab}\nab_au_b.
\] And finally \eqref{T2'} holds since $\gamma^{ab}\nab_au_b=\gamma^{ab}\cnab_a(N_bu\cdot N)+\gamma^{ab}\cnab_a\overline{u}_b$, and $\gamma^{ab}\cnab_a\overline{u}_b=\di u|_{\p\Omega}=0$.
\end{proof}

\section{Elliptic estimates on a bounded domain with a moving boundary}\label{section3}

In this section, we are going to introduce the elliptic estimates and tensor estimates of tangential projections which will be used repeatedly  in the remaining part of this paper. All the results in this section will be stated in a coordinate-independent way.

 Throughout this section, $\Omega$ is a bounded domain in $\R^n$ with $n\geq 2$. $\nab$ denotes the covariant derivative with respect to the metric $g_{ij}$ in $\Omega$, and $\cnab$ denotes the covariant differentiation on $\p\Omega$ with respect to the induced metric $\gamma_{ij}=g_{ij}-N_iN_j$. In this section (and only), $\Omega$ denotes a general domain with smooth boundary. In addition, we assume the normal vector $N$ to $\p\Omega$ is extended to a vector field in the interior of $\Omega$ satisfying $g_{ij}N^iN^j\leq 1$ by the same way as in Lemma \ref{trace 1}.

\subsection{Elliptic estimates}

\defn \label{tensor} (Differentiations) Let $u:\Omega\subset\R^n\to\R^n$ be a smooth vector field, and $\beta_k=\beta_{Ik}=\nab_{I}^{r}u_k$ be the $(0,r)$-tensor defined based on $u_k$, where $\nab_{I}^{r}=\nab_{i_1}\cdots\nab_{i_r}$ and $I=(i_1,\cdots,i_r)$ is the set of indices. Define $\di \beta_k=\nab_i\beta^{i}=\nab^r\di u$ and $\curl \beta=\nab_i\beta_j-\nab_j\beta_i=\nab^r\curl u_{ij}$.
\defn (Norms) Suppose $|I|=|J|=r$, $g^{IJ}=g^{i_1j_1}\cdots g^{i_r j_r}$ and $\gamma^{IJ}=\gamma^{i_1j_1}\cdots\gamma^{i_rj_r}$. For any $(0,r)$ tensors $\alpha$, $\beta$, we define $\langle\alpha,\beta\rangle=g^{IJ}\alpha_I \beta_J$ and $|\alpha|=\langle\alpha,\alpha\rangle$. If $(\Pi\beta)_{I}=\gamma_{I}^{J}\beta_J$ is the projection, then $\langle \Pi \alpha,\Pi\beta\rangle=\gamma^{IJ}\alpha_I\beta_J$. Also we define
\[
\begin{aligned}
\|\beta\|_{\lli}&=\bigg(\int_{\Omega}|\beta|^2\,d\mu_g\bigg)^{\frac{1}{2}},\\
|\beta|_{\llb}&=\bigg(\int_{\p\Omega}|\beta|^2\,d\mu_\gamma\bigg)^{\frac{1}{2}},\\
|\Pi\beta|_{\llb}&=\bigg(\int_{\p\Omega}|\Pi\beta|^2\,d\mu_\gamma\bigg)^{\frac{1}{2}}.
\end{aligned}
\]

Now we introduce the following Hodge's decomposition theorem, which is crucial in the control of full spatial derivatives of $u$ and $B$.

\thm \label{hodge}(Hodge's Decomposition Theorem) Let $\beta$ be defined in Definition \ref{tensor}. Suppose $|\theta|+|\frac{1}{\iota_0}|\leq K$, where $\theta$ is the second fundamental form of $\p\Omega$ and $\iota_0$ is the injective radius defined in (\ref{inj rad}), then
\begin{align}
|\nab\beta|^2&\lesssim g^{ij}\gamma^{kl}\gamma^{IJ}\nab_k\beta_{Ii}\nab_l\beta_{Jj}+|\di \beta|^2+|\curl\beta|^2\\
\int_{\Omega}|\nab\beta|^2\,d\mu_g&\lesssim\int_{\Omega}(N^iN^jg^{kl}\gamma^{IJ}\nab_k\beta_{Ii}\nab_l\beta_{Jj}+|\di \beta|^2+|\curl\beta|^2+K^2|\beta|^2)\,d\mu_g.
\end{align}
\begin{proof}
See \cite{christodoulou2000motion} (Lemma 5.5) for details.
\end{proof}

\prop \label{elliptic estimate}(Interior/boudnary elliptic estimates) Let $q:\Omega\to\R$ be a smooth function. Suppose that $|\theta|+|\frac{1}{\iota_0}|\leq K$, then we have, for any $r\geq2$ and $\delta>0$,
\begin{align}
\|\nab^r q\|_{\lli}+|\nab^r q|_{\llb}&\lesssim_{K,\vol\Omega}\sum_{s\leq r}|\Pi\nab^s q|_{\llb}+\sum_{s\leq r-1}||\nab^s\lap q||_{\lli},\label{ell est I}\\
\|\nab^r q\|_{\lli}+|\nab^{r-1} q|_{\llb} &\lesssim_{K,\vol\Omega}\delta\sum_{s\leq r}|\Pi\nab^s q|_{\llb}+\delta^{-1}\sum_{s\leq r-2}\|\nab^s\lap q\|_{\lli}.\label{ell est II}
\end{align}
where we have applied the convention that $A\lesssim_{p,q}B$ means $A\leq C_{p,q}B$.
\begin{proof}
See \cite{christodoulou2000motion} (Proposition 5.8) for details.
\end{proof}

\subsection{Estimates of tangential projections}
The projection of the tensor $\Pi\nab^sD_t^kP$ appears in the boundary part of our imposed energy \eqref{Er} as well as the elliptic estimates as in Proposition \ref{elliptic estimate}. It is crucial to compensate the possible loss of regularity with the help of tensor estimates below. 

Actually, one may simply observe that: If $q=0$ on $\p\Omega$, then $\Pi\nab^2q$ only contains the first order derivatives of $q$ and all components of the second fundamental form. Specifically, one has
\begin{equation}
\Pi\nab^2q=\cnab^2q+\theta\nab_Nq,\label{21}
\end{equation}
where the tangential component $\cnab^2q=0$ on the boundary. 

Furthermore, \eqref{21} gives the following control:
\begin{equation}
|\Pi\nab^2q|_{\llb}\leq |\theta|_{L^{\infty}(\p\Omega)}|\nab_N q|_{\llb}.\label{22}
\end{equation}

To prove \eqref{21}, first invoking the components of the projection operator $\gamma_{i}^{j}=\delta_{i}^{j}-N_iN^j$, then one has
\[
\gamma_{j}^{k}\nab_i\gamma_{k}^{l}=-\gamma_{j}^{k}\nab_i(N_kN^l)=-\gamma_j^k\theta_{ik}N^l-\gamma_j^kN_k\theta_i^l=-\theta_{ij}N^l,
\]
and thus
\[
\begin{aligned}
&~~~~\cnab_i\cnab_jq=\gamma_{i}^{i'}\gamma_{j}^{j'}\nab_{i'}\gamma_{j'}^{j''}\nab_{j''}q\\
&=\gamma_i^{i'}\gamma_j^{j'}\gamma_{j'}^{j''}\nab_{i'}\nab_{j''}q+\gamma_i^{i'}\gamma_{j}^{j'}(\nab_{i'}\gamma_{j'}^{j''})\nab_{j''}q\\
&=\gamma_{i}^{i'}\gamma_{j}^{j'}\nab_{i'}\nab_{j'}q-\theta_{ij}\nab_Nq.
\end{aligned}
\]
In general, the higher order projection formula is of the form
$$
\Pi \nab^r q = (\cnab^{r-2}\theta)\nab_N q+O(\nab^{r-1}q)+O(\cnab^{r-3}\theta),
$$
which yields the following generalisation of (\ref{22}). Its detailed proof can be found in \cite{christodoulou2000motion}.
\prop \label{tensor estimate}(Tensor estimate of tangential projections)
Suppose that $|\theta|+|\frac{1}{\iota_0}|\leq K$, and for $q=0$ on $\p\Omega$, then for $m=0,1$
\begin{align}
|\Pi\nab^{r}q|_{\llb}&\lesssim_{K} |(\cnab^{r-2}\theta)\nab_{N}q|_{\llb}+\sum_{l=1}^{r-1}|\nab^{r-l}q|_{\llb},\\
&~~~~ +(|\theta|_{\linf}+\sum_{0\leq l\leq r-2-m}|\cnab^{l}\theta|_{\llb})(\sum_{0\leq l\leq r-2+m}|\nab^l q|_{\llb}), \label{tensor est}
\end{align}
where the second line drops for $0\leq r\leq 4$.
\begin{proof}
See \cite{christodoulou2000motion} (Proposition 5.9).
\end{proof}

\subsection{Estimate for the second fundamental form on the boundary}
The estimate on the second fundamental form $\theta$ is a direct result of Proposition \ref{tensor estimate} with $q=P$ together with the Rayleigh-Taylor sign condition, e.g., $|\nab_N P|\geq -\nab_N P\geq \epsilon_0>0$.
\prop \label{theta estimate}($\theta$ estimate) \footnotemark
Assume that $0\leq r\leq4$. Suppose that $|\theta|+|\frac{1}{\iota_0}|\leq K$, and the Taylor sign condition $|\nab_N P|\geq \epsilon>0$ holds, then
\begin{equation}
|\cnab^{r-2}\theta|_{\llb}\lesssim_{K, \frac{1}{\epsilon_0}}|\Pi \nab^rP|_{\llb}+\sum_{s=1}^{r-1}|\nab^{r-s}P|_{\llb}. \label{theta est}
\end{equation}

\begin{flushright}
$\square$
\end{flushright}
\begin{rmk}
We point ou that the estimates of $\theta$ suggests that the boundary regularity is in fact controlled by the boundary $L^2$ -norm of $P$, with a loss of $2$ derivatives.
\end{rmk}
\bigskip

\section{Energy estimates}\label{section4}

\subsection{Tangential ($Q$-tensor) estimates when $s\geq 1$}
In this section, we will show the estimates of $E_{s,k}$, i.e., the estimates of $Q$-tensors and $\curl$, when $s\geq 1$. We will work under the Eulerian coordiantes so that we need not worry about the Christoffel symbols. We use the notation 
\begin{itemize}
\item $\|f\|_{s,k} = \|\p^sD_t^k f\|_{\lei}$,
\item $|f|_{s,k}= |\p^sD_t^k f|_{\leb}$.
\end{itemize}

We start with the velocity field.

\begin{equation}
\begin{aligned}
&~~~~\frac{1}{2}\frac{d}{dt}\int_{\DD_t}\rho Q(\p^sD_t^k u, \p^sD_t^k u)dx=\int_{\DD_t}\rho Q(\p^sD_t^k u, \p^sD_t^{k+1}u)dx+\underbrace{\int_{\DD_t}\rho Q(\p^sD_t^k u, [D_t, \p^s]D_t^k u)dx}_{R_1} \\
&=\int_{\DD_t}\rho Q(\p^sD_t^k u, \p^sD_t^{k}(\rho D_t u))dx+\underbrace{\int_{\DD_t}\rho Q(\p^sD_t^k u, [\rho, \p^sD_t^k]D_t u)dx}_{R_2}+R_1\\
&=\int_{\DD_t}\rho Q(\p^sD_t^k u, \p^sD_t^{k}(B\cdot\p B))dx-\int_{\DD_t}\rho Q(\p^sD_t^k u^i, \p^sD_t^{k}\p_i P)dx+R_1+R_2\\
&=:I_1+I_2+R_1+R_2,
\end{aligned}
\end{equation}where we use the first equation of MHD system \eqref{CMHD}.

The estimates \eqref{extending nab n}-\eqref{Dtgamma} together with a priori assumptions imply the following inequalities, of which the proof can be found in Section 3 of \cite{christodoulou2000motion}.

\[|D_tq^{ij}|\lesssim M, \q |\p q^{ij}|\lesssim M+K,\q  |\sigma u\cdot N|_{L^{\infty}(\p\Omega)}\lesssim K+M,\]

\[
|D_t\nu|_{L^{\infty}(\p\Omega)} = |D_t(-\nab_N P)^{-1}|_{L^{\infty}(\p\Omega)}\lesssim 1+\frac{1}{M},
\]
and
\begin{align}
D_t\gamma^{ij}=-2\gamma^{im}\gamma^{jn}(\frac{1}{2}D_tg_{mn}) \label{bdy q}.
\end{align}

Now we have
\begin{equation}
I_1\lesssim_{K,M}\|u\|_{s,k}\|(B\cdot\p) B\|_{s,k}.
\end{equation}

For $I_2$, we first commute $\p_i$ with $\p^sD_t^k$, then integrate $\p_i$ by parts, and finally try to construct the $Q$-tensor of $p$ by using the continuity equation. 

\begin{equation}
\begin{aligned}
I_2&=-\int_{\DD_t} Q(\p^sD_t^k u^i,\p_i\p^sD_t^k P)dx\underbrace{-\int_{\DD_t}Q(\p^sD_t^k u^i, \p^s([\p_i,D_t^k]P))}_{R_3}\\
&=\int_{\DD_t}Q(\p^sD_t^k\dive u, \p^sD_t^k P)dx+\underbrace{\int_{\DD_t}Q(\p^s([\p_i, D_t^k]u^i), \p^sD_t^k P)dx}_{R_4}\underbrace{-\int_{\p\DD_t}Q(\p^sD_t^k P, N_i\p^sD_t^k u^i)dS}_{R_1^*}+R_3
\end{aligned}
\end{equation}

Plugging $P=p+\frac{1}{2}|B|^2$ and the continuity equation into the first term, we can get the $Q$-tensor of $p$.

\begin{equation}
\begin{aligned}
&~~~~\idt Q(\p^sD_t^k\dive u, \p^sD_t^k P)dx\\
&=-\idt Q(\p^sD_t^k(\wk), \p^sD_t^k(\frac{1}{2}|B|^2))-\idt Q(\p^sD_t^k(\wk),\p^sD_t^k p)dx\\
&=\underbrace{-\idt Q(\wk \p^sD_t^{k+1}p,\p^sD_t^k(\frac{1}{2}|B|^2))dx}_{I_{21}}-\idt\wk Q(\p^sD_t^{k+1}p,\p^sD_t^k p)dx \\
&~~~~\underbrace{-\idt Q([\p^sD_t^k, \wk]D_t p, \p^sD_t^k(\frac{1}{2}|B|^2))dx}_{R_5}\underbrace{-\idt Q([\p^sD_t^k, \wk]D_t p, \p^sD_t^k p)dx}_{R_6}\\
&=I_{21}-\frac{1}{2}\frac{d}{dt}\idt \wk Q(\p^sD_t^k p, \p^sD_t^k p)dx +R_5+R_6 \\
&~~~~\underbrace{-\idt \wk Q(\p^sD_t^k p, [\p^s, D_t]D_t^k p)dx}_{R_7}+\underbrace{\frac{1}{2}\idt\rho D_t(\frac{\rho'(p)}{\rho^2})Q(\p^sD_t^k p, \p^sD_t^k p)\dx}_{R_8}.
\end{aligned}
\end{equation}Also we have 
\begin{equation}
I_{21}\lesssim_{K,M}\|\rho'(p)\p^sD_t^{k+1}p\|_{\lei}\|B\|_{s,k}.
\end{equation}

Next we control the other terms in $E_{s,k}$. Since $|D_tq^{ij}|\lesssim M$ in the interior and on the boundary $q^{ij}=\gamma^{ij}$, and by \eqref{bdy q} $D_t \gamma$ is tangential, one has

\begin{equation}
\begin{aligned}
&~~~~\frac{d}{dt}\frac{1}{2}\ipdt Q(\p^sD_t^k P, \p^sD_t^k P)\nu dS\\
&=\underbrace{\ipdt Q(\p^sD_t^kP, D_t\p^sD_t^k P)\nu dS}_{R_2^*}\\
&~~~~+\underbrace{\ipdt  \frac{1}{2}Q(\p^sD_t^k P, \p^sD_t^k P)D_t\nu -(\sigma u\cdot N)Q(\p^sD_t^k P, \p^sD_t^k P)\nu dS}_{R_9}.\\
\end{aligned}
\end{equation}

For the $Q$-tensor estimates of the magnetic field $B$, one should not plug the third equation in \eqref{CMHD} here, otherwise $\lambda\Delta B$ will appear and produce higher order terms on the boundary which cannot be controlled. Instead, we directly use $\|B\|_{s,k+1}$ to control the $Q$-tensor, and then reduce it to the control of the parabolic equation of $B$ in Section 5. 
\begin{equation}
\begin{aligned}
&~~~~\frac{1}{2}\frac{d}{dt}\idt Q(\p^sD_t^k B, \p^s D_t^k B)dx \\
&=\idt Q(\p^sD_t^k B, \p^s D_t^{k+1}B)\dx\\
&~~~~+\idt Q(\p^sD_t^k B, [D_t,\p^s]D_t^k B)dx+\idt \rho D_t(1/\rho)Q(\p^sD_t^k B, \p^s D_t^k B)\dx\\
&=:I_3+R_{10}+R_{11},
\end{aligned}
\end{equation}where
\begin{equation}
I_3\lesssim_{K,M}\|B\|_{s,k}\|B\|_{s,k+1}.
\end{equation}

We point out that, $R_1, R_7, R_9, R_{10}$ and the boundary terms $R_1^*,R_2^*$ vanish if $s=0$, in the case of which we can drop the $Q$-tensor notation because there is no spatial derivative. Therefore we have
\begin{equation}
\begin{aligned}
\frac{d}{dt}\sum_{s+k=r,s\geq 1} E_{s,k}&\lesssim \|u\|_{s,k}\|(B\cdot \p)B\|_{s,k}+\|\rho'(p)\p^sD_t^{k+1}p\|_{\lei}\|B\|_{s,k}+\|B\|_{s,k}\|B\|_{s,k+1} \\
&~~~~+R_1+\cdots+R_{11}+R_1^*+R_2^*.
\end{aligned}
\end{equation}

\subsection{Energy estimates of full time derivatives}

When there is no spatial derivative, we need to add weight $\sqrt{\wt}$ in $u$, i.e. $$E_{0,r}=\frac{1}{2}\left(\idt \rho\wt |D_t^r u|^2\dx+\idt |D_t^r B|^2\dx+\idt \wk|D_t^r p|^2\dx\right).$$ When computing $\frac{d}{dt} E_{0,r}$, there will be some terms that $D_t$ falls on the weight function, but these terms can all be controlled by $E_{0,r}$ because $|\rho^{(m)}(p)|\lesssim c_0\swt$. Therefore one can get a similar estimate as above:
\begin{equation}
\begin{aligned}
\frac{d}{dt} E_{0,r}&\lesssim_{K,M} \|\swt D_t^4 u\|_{\lei}\|(B\cdot \p)B\|_{0,4}+\|\rho'(p)\p D_t^{4}p\|_{\lei}\|B\|_{0,4}+\|B\|_{0,4}\|B\|_{0,5} \\
&~~~~+R_2+\cdots+R_6+R_8+R_{11}.
\end{aligned}
\end{equation}

\subsection{Curl estimates}

Similarly as above, one has
\begin{equation}
\begin{aligned}
&~~~~\frac{1}{2}\frac{d}{dt}\idt \rho|\curl \p^{r-1}u|^2+|\curl \p^{r-1}B|^2 \dx \\
&=\idt \curl \p^{r-1} u\cdot\curl \p^{r-1} (\rho D_t u)\dx+\underbrace{\idt \curl \p^{r-1}B \cdot \curl \p^{r-1}D_t B\dx}_{I_4}+R_{12}+\cdots+R_{15}\\
&=\underbrace{\idt \curl \p^{r-1}u\cdot\curl\p^{r-1}(B\cdot\p B)\dx}_{I_5}+\idt\curl\p^{r-1}u\cdot \p^{r-1}\underbrace{\curl(\p P)}_{=0} \dx +I_4+R_{12}+\cdots+R_{15},
\end{aligned}
\end{equation}where the remainder terms $R_{12},\cdots, R_{15}$ are defined by:
\[
\begin{aligned}
R_{12}&:=\idt\rho \curl \p^{r-1}u\cdot [D_t, \curl \p^{r-1}]u \dx \\
R_{13}&:=\idt \curl \p^{r-1}B\cdot [D_t, \curl \p^{r-1}]B \dx \\
R_{14}&:=\idt\rho D_t(1/\rho^2)|\curl \p^{r-1}B|^2 \dx \\
R_{15}&:=\idt \curl\p^{r-1}u\cdot[\rho,\curl\p^{r-1}]D_t u \dx .
\end{aligned}
\]

$I_4$ and $I_5$ can also be similarly proceeded as $I_1$ and $I_3$:
\begin{equation}
I_4\lesssim_{K,M} \|B\|_{r,0}\|B\|_{r,1},~~~I_5\lesssim_{K,M}\|u\|_{r,0}\|(B\cdot\p)B\|_{r,0}.
\end{equation}

Combining all the estimates above, we now have:
\begin{equation}
\begin{aligned}
\frac{d}{dt}\left(\sum_{s+k=r} E_{s,k}+K_r\right)&\lesssim_{K,M} \|u\|_{s,k}\|(B\cdot \p)B\|_{s,k}+\|\rho'(p)\p^sD_t^{k+1}p\|_{\lei}\|B\|_{s,k}+\|B\|_{s,k}\|B\|_{s,k+1} \\
&~~~~+R_1+\cdots+R_{15}+R_1^*+R_2^*.
\end{aligned}
\end{equation}

Therefore the $Q$-tensor and curl estimates are all reduced to the higher order terms ($I_1,\cdots, I_5, R_1^*,R_2^*$) and the remainders. Next step we will control all the remainders by $\|u\|_{r,0},\|p\|_{s,k}$ and $\|B\|_{s,k}$. The reduction of those higher order terms will be shown in Section 5.

\subsection{The precise form of commutators between $D_t$'s and spatial derivatives}

Here we present the precise form of commutators which will be used repeatedly in the control of remainders. \eqref{commutator1}, \eqref{commutator2}, \eqref{commutator4} are the same as in (4.5)-(4.7) in Lindblad-Luo \cite{lindblad2018priori}. \eqref{commutator3} is a direct consequence of Leibniz rule and \eqref{commutator2}.

\begin{equation}\label{commutator1}
[D_t,\p^r]=\sum_{s=0}^{r-1}\p^s[D_t,\p]\p^{r-s-1}=\sum_{s=0}^{r-1}-C_{s+1}^r(\p^{1+s}u) \symdot \p^{r-s},  
\end{equation}where
\[
((\p^{1+s}u) \symdot \p^{r-s})_{i_1,\cdots,i_r}=\frac{1}{r!}\sum_{\sigma\in S_r}(\p^{1+s}_{i_{\sigma_1}\cdots i_{\sigma_{1+s}}}u^k)(\p^s_{k,i_{\sigma_{s+2}\cdots i_{\sigma_{r}}}}).
\]$S_r$ is the $r$-symmetric group.

\begin{equation}\label{commutator2}
~ [\p,D_t^k]= \sum_{l_1+l_2=k-1}c_{l_1,l_2}(\p D_t^{l_1}u)\symdot(\p D_t^{l_2})+ \sum_{l_1+\cdots+ l_n= k-n+1, \, n\geq 3} d_{l_1,\cdots,l_n}(\p D_t^{l_1} u)\cdots (\p D_t^{l_{n-1}} u) (\p D_t^{l_n}).
\end{equation}

\begin{equation}\label{commutator3}
[D_t^k, B\cdot \p]=\sum_{j=0}^{k-1}C_k^j D_t^{k-j} B^l \p_l D_t^j+\sum_{j=1}^k C_k^j (D_t^{k-j} B^l)[D_t^j,\p_l].
\end{equation}

\begin{equation}\label{commutator4}
\begin{aligned}
~[D_t^{r-1},\Delta]&=(\p D_t^{l_1}u)\cdot (\p^2 D_t^{l_2})\\
&~~~~+ \sum_{l_1+\cdots+ l_n= r-n, \, n\geq 3} d_{l_1,\cdots,l_n}(\p D_t^{l_3} u)\cdots (\p D_t^{l_{n}} u)\cdot (\lap D_t^{l_1} u)\cdot(\p D_t^{l_2})\\
&~~~~+ \sum_{l_1+\cdots+ l_n= r-n, \, n\geq 3} e_{l_1,\cdots,l_n}(\p D_t^{l_3} u)\cdots (\p D_t^{l_{n}} u)\cdot (\p^2 D_t^{l_1} u)\cdot(\p D_t^{l_2})\\
&~~~~+\sum_{l_1+\cdots+ l_n= r-n, \, n\geq 3} f_{l_1,\cdots,l_n}(\p D_t^{l_3} u)\cdots (\p D_t^{l_n}u)\cdot (\p D_t^{l_1} u)\cdot (\p^2 D_t^{l_2}),
\end{aligned}
\end{equation}

\subsection{Remainder and commutator estimates}

\subsubsection*{1. Boundary term $R_1^*+R_2^*$}

Recall that $\nu=(-\p P/\p N)^{-1}$, so $\nu^{-1}N_i=\p_i P$. Therefore, $R_1^*+R_2^*$ becomes
\[
R_1^*+R_2^*=\ipdt \rho Q(\p^sD_t^k P, D_t\p^sD_t^k P+(\p_iP)(\p^sD_t^k u^i))\nu dS.
\]
When $s=0$ or 1, $R_1^*+R_2^*$ vanishes because $D_t$ and $\Pi \p^1=\bar{\p}$ are both tangential derivatives of the moving boundary $\p\DD_t$ on which $P=0$. For $s\geq 2$, the simplification is exactly the same as (5.14)-(5.15) in Lindblad-Luo  \cite{lindblad2018priori}:

\begin{equation}
\begin{aligned}
s=r, k=0&:~~\Pi(D_t\p^r P+(\p_i P)\p^r u^i)=\Pi \p^rD_t P+\sum_{m=0}^{r-2}d_{mr}\Pi((\p^{m+1}u)\symdot \p^{r-m}P) \\
2\leq s<r&:~~\Pi(D_t\p^sD_t^k P+(\p_i P)(\p^s D_t^k u^i))=\Pi\p^sD_t^{k+1}P+\Pi((\p_i P)(\p^sD_t^k u^i))\\
&~~~~~~~~~~~~~~~~~~~~~~~~~~~~~~~~~~~~~~~~~~~~~~~~~~~~~~~~~~~~~~~~~+\sum_{m=0}^{s-1} d_{mr}\Pi((\p^{m+1}u)\symdot \p^{s-m}D_t^k P).
\end{aligned}
\end{equation}

\begin{rmk}
In the last term on the first line, the summation is taken from 0 to $r-2$ instead of $r-1$ because $\Pi \p^r P$ is cancelled by the commutator. This is essential for our estimate: One cannot control $\Pi \p^r u$ on the boundary because $u\neq 0$ on $\p\DD_t$ causes loss of regularity. However, $|\Pi\p^sD_t^k u|_{\leb}$ can be controlled when $k\geq 1$ since we can use the first equation of \eqref{CMHD} to reduce this term to $|\Pi\p^{s+1}D_t^{k-1} B|_{\leb}$ and $|\Pi\p^{s+1}D_t^{k-1} p|_{\leb}$, which can be controlled by the elliptic estimates.
\end{rmk}

Hence, by H\"older's inequality we have
\begin{equation}
\begin{aligned}
R_1^*+R_2^*&\lesssim_{K, M}\sum_{k+s=r,s\geq 2}\bigg(|\Pi\p^s D_t^k P|_{\leb} \Big(|\Pi\p^s D_t^{k+1} P|_{\leb}\\
&~~~~+|\Pi(\p_i P)(\p^sD_t^k u^i)|_{\leb}+\sum_{0\leq m\leq s-1}|\Pi((\p^{m+1}u)\symdot\p^{s-m}D_t^k P)|_{\leb}\Big)\bigg)\\
&~~~~+|\Pi\p^r P|_{\leb}\Big(|\Pi \p^r D_t P|_{\leb}+\sum_{0\leq m \leq r-2}|\Pi((\p^{m+1}u)\symdot \p^{r-m}P)|_{\leb}\Big).
\end{aligned}
\end{equation}

\subsubsection*{2. Interior terms $R_1+\cdots+R_{15}$}

We are going to control all the remainders $R_1\cdots,R_{15}$. For simplicity we only show the details for top order case, i.e. $s+k=4$. For the lower order cases, we only list the result and omit the proof.
\bigskip

(1) $R_1=\idt \rho Q(\p^sD_t^k u, [D_t,\p^s]D_t^k u)\dx$.

Since $$[D_t,\p^s]D_t^k u=-\sum_{m=0}^{s+1}C_{s}^{m+1}\p^{1+m}u\symdot\p^{s-m}D_t^k u,$$ we know
\begin{itemize}
\item $s\geq 2$: $R_1\lesssim_{K, M}\|u\|_{s,k}(\|u\|_{s,k}+\|u\|_{s-1,k})$;

\item $s=1,~k=3$: $R_1\lesssim_{K, M} \|u\|_{1,3}^2$;

\item $s=0,~k=4: R_1=0.$
\end{itemize}

(2) $R_2=\idt Q(\p^sD_t^k u,[\rho,\p^sD_t^k]D_tu)\dx$.

Let $D$ be $D_t$ or $\p$, then the commutator can be written as  $$[\rho,\p^sD_t^k]D_tu=\sum_{l=1}^4 C_4^l D^l\rho D^{4-l}D_t u=\sum_{l=1}^4D^{l-1}(\rho'(p)Dp)D^{4-l}D_t u.$$ Therefore we have:

\begin{itemize}
\item $s=4,~k=0:~R_2\lesssim_{K,M,c_0, \volo} \|u\|_{4,0}(\|p\|_{4,0}+\|u\|_{3,1}+\|u\|_{2,1})$;

\item $s=3,~k=1:~R_2\lesssim_{K,M,c_0, \volo} \|u\|_{3,1}(\|p\|_{3,1}+\|p\|_{3,0}+\|u\|_{3,1}+\|u\|_{3,0}+\|u\|_{2,2}+\|u\|_{1,2})$;

\item $s=2,~k=2:~R_2\lesssim_{K,M,c_0, \volo} \|u\|_{2,2}(\|p\|_{2,2}+\|p\|_{2,1}+\|u\|_{0,3}+\|u\|_{1,3}+\|u\|_{1,2}+\|u\|_{2,2}+\|u\|_{2,1})$;

\item $s=1,~k=3:~R_2\lesssim_{K,M,c_0, \volo} \|u\|_{1,3}(\|\wt\p D_t^3 p\|_{\lei}+\|p\|_{2,2}+\|u\|_{1,3}+\|u\|_{0,3}+\|u\|_{1,2})$;

\item $s=0,~k=4:$ we have 
\[
\begin{aligned}
R_2&=\idt\wt D_t^4 u\cdot[\rho,D_t^4]D_tu\dx \\
&\lesssim_{M, c_0} \|\swt D_t^4 u\|_{\lli}(\|\swt D_t^4 u\|_{\lei}+\|\swt D_t^3 u\|_{\lei} \\
&~~~~~~~~~~~~~~~~~~~~~~~~~~~~~~~~~~~~~~~~~~~~~~~+\|\swt D_t^4 p\|_{\lei}+\|\swt D_t^3 p\|_{\lei}).
\end{aligned}
\]

Note that the constant in the equality depends on $\volo$ because we use Poincar\'e's inequality on $p$.
\end{itemize} 

(3) $R_3=-\idt Q(\p^sD_t^k u^i, \p^s([\p_i, D_t^k]P))\dx$.

Recall \eqref{commutator2} the highest order terms in the commutator $[\p, D_t^k]f$ are $(\p D_t^{k-1}u)(\p f)$ and $(\p u)(\p D_t^{k-1}f)$. Hence we can get the following estimates up to lower order terms:
\begin{itemize}
\item $s=4,~k=0:~R_3=0$;

\item $s=3,~k=1:~R_3\lesssim_{M,\volo} \|u\|_{3,1}(\|u\|_{4,0}+\|u\|_{3,0}+\|p\|_{4,0}+\|B\|_{4,0})$;

\item $s=2,~k=2:~R_3\lesssim_{M,\volo} \|u\|_{2,2}(\|u\|_{3,1}+\|u\|_{2,1}+\|p\|_{3,0}+\|B\|_{3,0}+\|u\|_{3,0}+\|p\|_{3,1}+\|B\|_{3,1})$;

\item $s=1,~k=3:~R_3\lesssim_{M,\volo} \|u\|_{1,3}(\|u\|_{2,2}+\|u\|_{2,1}+\|p\|_{2,2}+\|B\|_{2,2}+\|p\|_{2,1}+\|B\|_{2,1})$;

\item $s=0,~k=4$:  We have 
\[
\begin{aligned}
R_3&=\idt \wt D_t^4 u\cdot [\p_i, D_t^4]P\dx\\
&\lesssim_{M, c_0} \|\swt D_t^4 u\|_{\lei}(\|\swt \nabla D_t^3 p\|_{\lei}+\|B\|_{1,3}+\|B\|_{2,2}+\|p\|_{2,2}+\|B\|_{2,1}+\|p\|_{2,1}).
\end{aligned}
\]
\end{itemize}

(4) $R_4=\idt Q(\p^s([\p_i, D_t^k]u^i), \p^sD_t^k P)\dx$.

The commutator term is exactly of the same form as $R_3$ except we replace $P$ by $u^i$. We list the result here and omit the proof.
\begin{itemize}
\item $s=4,~k=0:~R_4=0$;

\item $s=3,~k=1:~R_4\lesssim_{K,M,\volo} (\|u\|_{4,0}+\|u\|_{3,0})(\|p\|_{3,1}+\|B\|_{3,1})$;

\item $s=2,~k=2:~R_4\lesssim_{K,M,\volo} (\|u\|_{3,1}+\|u\|_{3,0})(\|p\|_{2,2}+\|B\|_{2,2})$;

\item $s=1,~k=3:~R_4\lesssim_{K,M,\volo} (\|u\|_{2,2}+\|u\|_{2,1})(\|p\|_{1,3}+\|B\|_{1,3})$;

\item $s=0,~k=4$:  We have 
\[
R_4=\idt \wt [\p_i, D_t^4]u^i\cdot D_t^4 P\dx\lesssim_{M,c_0} (\|u\|_{1,3}+\|u\|_{1,2})(\|\wt D_t^4 p\|_{\lei}+\|B\|_{0,4}).
\]
\end{itemize}

\begin{rmk}
As we can see, the control of $R_4$ when $s=0$ illustrates that \textbf{the weight function is necessary}: If we remove the weight function, then $\|D_t^ 4p\|_{\lli}$ has no control, i.e., either wave equation or $E_{0,4}$ cannot control this term.
\end{rmk}

(5) $R_5=-\idt Q\left([\p^sD_t^k,\wk]D_t p,\p^sD_t^k(\frac{1}{2}|B|^2)\right)\dx$.

Let $D$ be $D_t$ or $\p$, then the commutator can be written as  $$\left[\p^sD_t^k,\wk\right]D_tp=\sum_{l=1}^4 C_4^l D^l\left(\wk\right) D^{4-l}D_t p=\sum_{l=1}^4D^{l}\left(\wk\right)D^{4-l}D_t p.$$

 Therefore we can find that every term is assigned at least $\swt$ weight. We have
\begin{itemize}
\item $s\geq 2:~R_5\lesssim_{K,M,c_,\volo}\|p\|_{s,k}\|B\|_{s,k}$;

\item $s=1,~k=3:~R_5\lesssim_{K,M,c_0,\volo}\|\swt\nabla D_t^3 p\|_{\lei}\|B\|_{1,3}$.

\item $s=0, k=4:$ The weighted estimate is 
\[
\begin{aligned}
R_5&=-\idt\wt \left[D_t^4,\wk\right]D_t p\cdot D_t^4(\frac{1}{2}|B|^2)\dx \\
&\lesssim_{M,c_0}(\|\wt D_t^4 p\|_{\lei}+\|\wt D_t^3 p\|_{\lei})\|B\|_{0,4}.
\end{aligned}
\]
\end{itemize}

(6) $R_6=-\idt Q\left([\p^sD_t^k,\wk]D_t p,\p^sD_t^k p\right)\dx$.

Similarly as $R_5$, we have:
\begin{itemize}
\item $s\geq 2:~R_6\lesssim_{K,M,c_0,\volo}\|p\|_{s,k}^2$;

\item $s=1,~k=3$: $$R_6\lesssim_{K,M,c_0,\volo}\|\swt D_t^4 p\|_{\lei}(\|\swt D_t^4 p\|_{\lei}+\|\swt\nabla D_t^3 p\|_{\lei})\lesssim E_4.$$

\item $s=0,~k=4$: $$R_6=-\idt \wk \left[D_t^4,\wk\right]D_t p\cdot D_t^4 p\dx\lesssim_{M, c_0} \|\swt D_t^4 p\|_{\lei}^2.$$
\end{itemize}

(7) $R_7=-\idt \wk Q(\p^sD_t^k p,[\p^s, D_t]D_t^k p)\dx$.

Since $$[D_t,\p^s]D_t^k p=-\sum_{m=0}^{s+1}C_{s}^{m+1}\p^{1+m}u\symdot\p^{s-m}D_t^k p,$$ we know

\begin{itemize}
\item $s=4,~k=0:~R_7\lesssim_{K,M, c_0, \volo}\|u\|_{4,0}+\|u\|_{3,0}+\|p\|_{4,0}$;

\item $s=3,~k=1:~R_7\lesssim_{K,M, c_0, \volo}\|u\|_{3,0}+\|p\|_{3,1}$;

\item $s=2,~k=2:~R_7\lesssim_{K,M, c_0, \volo}\|p\|_{2,2}$;

\item $s=1,~k=3:~R_7\lesssim_{K,M, c_0, \volo}\|\swt \nabla D_t^3 p\|_{\lli}$;

\item $s=0,~k=4:~R_7=0.$
\end{itemize}

(8) $R_8=\frac{1}{2}\idt\rho D_t(\frac{\rho'(p)}{\rho^2})Q(\p^sD_t^k p, \p^sD_t^k p)\dx\lesssim \|\swt\p^s D_t^k p\|_{\lei}^2\lesssim E_{s,k}$.

(9) $R_9=\ipdt Q(\p^sD_t^k P, \p^sD_t^k P)D_t \nu ~dS$.

\begin{itemize}
\item $s\leq 1:~R_9=0$ because $D_t, \Pi\p\in\mathcal{T}(\p\DD_t)$ and $P=0$ on $\p\DD_t$;

\item $s\geq 2:~R_9\lesssim_{K, M} E_{s,k}$.
\end{itemize}

(10) $R_{10}=\idt Q(\p^sD_t^k B, [D_t,\p^s]D_t^k B)\dx$.

The control of $R_{10}$ is the same as $R_1$ except replacing $u$ by $B$. Therefore we have:

\begin{itemize}
\item $s=4,~k=0:~R_{10}\lesssim_{K,M, c_0,\volo} \|B\|_{4,0}(\|u\|_{4,0}+\|u\|_{3,0}+\|B\|_{4,0})$;

\item $s=3,~k=1:~R_{10}\lesssim_{K,M, c_0,\volo} \|B\|_{3,1}(\|u\|_{3,0}+\|B\|_{3,1})$;

\item $s\leq 2:~R_{10}\lesssim_{K,M, c_0,\volo} \|B\|_{s,k}^2$.
\end{itemize}

(11) $R_{11}=\idt \rho D_t(1/\rho)Q(\p^sD_t^k B, \p^s D_t^k B)\dx\lesssim_{c_0}E_{s,k}$.

(12) $R_{12},\cdots,R_{15}:$ The control of $R_{12},R_{13},R_{15}$ are the same as $R_1, R_{10}, R_2$ respectively when $s=4,~k=0$, and $R_{14}\lesssim K_4$. So we have:
\[
R_{12}+\cdots+R_{15}\lesssim_{K,M, c_0,\volo}K_4+(\|u\|_{4,0}+\|u\|_{3,0}+\|B\|_{4,0}+\|p\|_{4,0})^2.
\]

Before summarising the estimates, we would like to reduce the estimates of $\|u\|_{s,k}$ to that of $B$ and $p$ by using the first equation in \eqref{CMHD}, because we are going to use elliptic estimates for $B$ and $p$ in order to further reduce to control of the wave equation of $p$ and the heat equation of $B$.

We prove the following estimates for $\p^sD_t^k u$ when $1\leq k\leq r-1$, while $\|u\|_{0,r}=\|D_t^r u\|_{\lli}\leq \sqrt{E_{0,4}}$ and $\|u\|_{r,0}$ will be controlled later by div-curl estimates.
\begin{lem}\label{usk}
For $s+k=4$, one has the following bounds:
\[
\begin{aligned}
\|u\|_{3,1}&\lesssim_{K,M, c_0} \|B\|_{4,0}+\|p\|_{4,0},\\
\|u\|_{2,2}&\lesssim_{K,M, c_0}\|B\|_{3,1}+\|p\|_{3,1}+\|B\|_{3,0}+\|p\|_{3,0}+\|u\|_{3,0}, \\
\|u\|_{1,3}&\lesssim_{K,M, c_0} \|B\|_{2,2}+\|p\|_{2,2}+\|B\|_{2,1}+\|p\|_{2,1}+\|B\|_{3,0}+\|p\|_{3,0}.
\end{aligned}
\] 
While for $s+k=r<4$, the result becomes $\|u\|_{s,k}\lesssim_{M,c_0}\|B\|_{s+1,k-1}+\|p\|_{s+1,k-1}$.
\end{lem}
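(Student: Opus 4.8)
The plan is to use the momentum equation to trade one material derivative for one spatial derivative, and then to absorb everything else into lower-order data or into a downward induction on $k$. Concretely, rewrite the first equation of \eqref{CMHD}, using $\p_iP=\p_ip+B^l\p_iB_l$ and $1\le\rho\le M$, as
\begin{equation}\label{momeq-plan}
D_tu=\frac1\rho\left(B\cdot\p B-\p p-B^l\p B_l\right).
\end{equation}
For $k\ge1$ I would write $\p^sD_t^ku=\p^sD_t^{k-1}(D_tu)$, substitute \eqref{momeq-plan}, commute the $(k-1)$ material derivatives past the single spatial derivative sitting on $B$ and on $p$ using the commutator identity \eqref{commutator2}, and distribute the $s$ spatial derivatives by Leibniz. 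The reason for keeping $\rho^{-1}$ out front is that its derivatives are harmless: $\p(\rho^{-1})=-\rho^{-2}\rho'(p)\p p$, $D_t(\rho^{-1})=\rho^{-1}\di u$, and by the chain rule together with \eqref{weight} any derivative of $\rho^{-1}$, $\rho'(p)$ or $B^l$ of total order $\le2$ is pointwise $\lesssim_{K,M,c_0}1$ by the a priori assumptions \eqref{asspriori}.

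After the expansion, the terms in which all $s$ spatial and all $k-1$ material derivatives fall on the inner factor $\p B$ or $\p p$ are the leading ones; they are $\rho^{-1}B^l\p_l(\p^sD_t^{k-1}B_i)$ and $\rho^{-1}\p_i(\p^sD_t^{k-1}p)$, hence bounded in $\lei$ by $\|B\|_{s+1,k-1}+\|p\|_{s+1,k-1}$ once one uses $|B|,|\rho^{-1}|\lesssim M$. These are exactly the first terms in each displayed inequality ($\|B\|_{4,0}+\|p\|_{4,0}$, $\|B\|_{3,1}+\|p\|_{3,1}$, $\|B\|_{2,2}+\|p\|_{2,2}$). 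Every remaining term, after expanding derivatives of $\rho^{-1},\rho'(p),B^l$ by the chain rule, is bounded by $C(K,M,c_0)$ times a norm $\|u\|_{s',k'}$, $\|B\|_{s',k'}$ or $\|p\|_{s',k'}$ with total order $s'+k'\le r-1$ — indeed the outer $\p^s$ can add at most one extra spatial derivative to the $\p u$-factors produced by \eqref{commutator2}, whose material order is $\le k-2$, so no $u$-derivative of order $r$ and no $B$- or $p$-norm of order $r$ other than the two leading ones is ever generated.

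It then remains to sort the order-$\le(r-1)$ remainder. The $B$- and $p$-terms are either the lower-order norms listed in the statement ($\|B\|_{3,0},\|p\|_{3,0}$; $\|B\|_{3,1},\|p\|_{3,1}$; $\|B\|_{2,1},\|p\|_{2,1}$, etc.) or are absorbed into $\sqrt{E_{r-1}^*}$. The $u$-terms with $k'=0$ cannot be reduced via \eqref{momeq-plan} and are kept explicitly — this is precisely where $\|u\|_{3,0}$ enters the $s=2$ case; and one checks that in the $s=3,k=1$ case no material derivative has to be commuted, so no $\p u$-factor and hence no $\|u\|_{\cdot,0}$ is produced, consistently with the clean right-hand side. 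The $u$-terms with $1\le k'\le k-1$ are re-estimated by reapplying the Lemma with $r$ replaced by $s'+k'\le r-1$, i.e. a downward induction on $k$, while $\|u\|_{0,k'}\lesssim\sqrt{E_{0,k'}}$ is lower order by \eqref{E0r}. Collecting the pieces reproduces the three inequalities for $s+k=4$; for $s+k=r<4$ the same computation is shorter because all non-leading terms are of order $\le r-1$, so only $\|B\|_{s+1,k-1}+\|p\|_{s+1,k-1}$ remains.

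The main obstacle is the combinatorial bookkeeping in the two middle steps: one must check that expanding $\p^sD_t^{k-1}$ of the nonlinear right-hand side of \eqref{momeq-plan}, with the commutators \eqref{commutator2} nested inside, never pushes the spatial-derivative count on $B$ or $p$ to $s+2$ and never creates a $u$-derivative of full order $r$, and one must track exactly which order-$(r-1)$ norms of $u$ with $k'=0$ appear so that the right-hand sides come out verbatim as stated. Everything else is routine once \eqref{asspriori}, \eqref{weight}, Hölder's inequality and Poincaré's inequality (for the $p$-factors, via $p|_{\p\DD_t}=0$) are in hand.
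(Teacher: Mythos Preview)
Your proposal is correct and follows essentially the same route as the paper's proof: both substitute the momentum equation to trade one $D_t$ for one $\p$, isolate the leading $\|B\|_{s+1,k-1}+\|p\|_{s+1,k-1}$ contribution, control the commutator and Leibniz remainders by norms of total order $\le r-1$ (with Poincar\'e for the $p$-factors), and handle residual $\|u\|_{s',k'}$ terms with $k'\ge1$ by downward induction. One phrasing slip: ``the outer $\p^s$ can add at most one extra spatial derivative'' should read ``at most $s$ extra''; the conclusion that no remainder reaches total order $r$ is nonetheless correct since the commutator $\p u$-factors carry material order $\le k-2$.
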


\begin{proof} For simplicity, we only prove it for $s+k=4$. The proof is quite straightforward by the first equation in \eqref{CMHD}. We have
\[
\begin{aligned}
\p^sD_t^k u&=\p^s D_t^{k-1}\left(\frac{1}{\rho}(B\cdot \p B-\p p- (\p B)\cdot B)\right)\\
&=\frac{1}{\rho} \left(B\cdot \p^{s+1}D_t^{k-1} B+\p^{s+1}D_t^{k-1}p\right)+\text{commutators},
\end{aligned}
\]
 The main term can be easily controlled by $C(M)(\|B\|_{s+1,k-1}+\|p\|_{s+1,k-1})$ by H\"older's inequality.

\begin{itemize}

\item $s=3,~k=1$: In this case the commutator term is $\sum_{k=1}^3\p^k(1/\rho)\p^{3-k}(B\cdot \p B-\p P)$ which can be controlled by $\|B\|_{4,0}$ and $\|p\|_{4,0}$ by Poincar\'e's inequality.

When $k\geq 2$. The highest order terms in the commutators consist of $\p^s([D_t^{k-1},\p]B)$, $\p^s([D_t^{k-1},\p]p)$ and $[\p^s D_t^{k-1},\p](1/\rho)$. 

\item $s=2,~k=2$: From the specific representation of $[D_t,\p]=(\p u)\symdot\p$, ee know the highest order commutator terms are $\p^2(\p u\symdot\p p)$ and $\p^2(\p u\symdot\p B)$ which can be bounded by $\|B\|_{3,0}+\|p\|_{3,0}+\|u\|_{3,0}$.

\item $s=1,~k=3$: Similarly as above, one can get the commutator terms bounded by $\|B\|_{2,1}+\|p\|_{2,1}+\|u\|_{2,1}$. Then apply the same method to $\|u\|_{2,1}$ to derived the result.
\end{itemize}
\end{proof}

Combining all the estimates above, we get
{\small\begin{equation}\label{tg}
\begin{aligned}
&~~~~\frac{d}{dt} \left(\sum_{s+k=4} E_{s,k}+K_4\right) \\
&\lesssim_{M,c_0,\volo}\sum_{s+k=4} E_{s,k}+K_4 \\
&~~~~+(\|p\|_{3,1}+\|p\|_{3,0}+\|B\|_{3,1}+\|B\|_{3,0}+\|u\|_{3,0})(\|p\|_{1,3}+\|B\|_{1,3})\\
&~~~~+\left(\sum_{s+k=4}\|B\|_{s,k}+\sum_{s+k=4, s\geq 2}\|p\|_{s,k}+\|\swt \p D_t^3p\|_{\lei}+\|\swt D_t^4 p\|_{\lei} \right)^2\\
&~~~~+\sum_{s+k=4, k\geq 1}\left(\|B\|_{s+1,k-1}+\|p\|_{s+1,k-1}\right)\|(B\cdot\p)B\|_{s,k}\\
&~~~~+\sum_{s+k=4}\|B\|_{s,k}\left(\|B\|_{s,k+1}+\|\wt \p^sD_t^{k+1}p\|_{\lei}\right)\\
&~~~~+(\|u\|_{4,0}+\|B\|_{4,0})\|(B\cdot\p)B\|_{4,0}+\|B\|_{4,0}\|B\|_{4,1} \\
&~~~~+\sum_{s+k=4,2\leq s\leq 3}\bigg(|\Pi\p^s D_t^k P|_{\leb} \Big(|\Pi\p^s D_t^{k+1} P|_{\leb}\\
&~~~~+|\Pi(\p_i P)(\p^sD_t^k u^i)|_{\leb}+\sum_{0\leq m\leq s-1}|\Pi((\p^{m+1}u)\symdot\p^{s-m}D_t^k P)|_{\leb}\Big)\bigg)\\
&~~~~+|\Pi\p^4 P|_{\leb}\Big(|\Pi \p^4 D_t P|_{\leb}+\sum_{0\leq m \leq 2}|\Pi((\p^{m+1}u)\symdot \p^{4-m}P)|_{\leb}\Big).
\end{aligned}
\end{equation}}

Similar estimate holds for $s+k=r\leq 3$.
{\small\begin{equation}\label{tglow}
\begin{aligned}
&\frac{d}{dt} \left(\sum_{s+k=r} E_{s,k}+K_r\right)\\
&\lesssim_{M,c_0,\volo}\sum_{s+k=r} E_{s,k}+K_r \\
&~~~~~~~~+\left(\sum_{s+k=r}\|B\|_{s,k}+\sum_{s+k=r, s\geq 2}\|p\|_{s,k}+\|p\|_{2,2}+\|\swt D_t^r p\|_{\lei} \right)^2\\
&~~~~~~~~+\left(\sum_{s+k=4, s\geq 1}\|B\|_{s,k}+\sum_{s+k=r, s\geq 2}\|p\|_{s,k}+\|\swt \p D_t^3 p\|_{\lei} \right)^2\\
&~~~~~~~~+\sum_{s+k=r,s\geq 2}\sum_{k+s=r,s\geq 2}\bigg(|\Pi\p^s D_t^k P|_{\leb} \Big(|\Pi\p^s D_t^{k+1} P|_{\leb}\\
&~~~~+|\Pi(\p_i P)(\p^sD_t^k u^i)|_{\leb}+\sum_{0\leq m\leq s-1}|\Pi((\p^{m+1}u)\symdot\p^{s-m}D_t^k P)|_{\leb}\Big)\bigg)\\
&~~~~+|\Pi\p^r P|_{\leb}\Big(|\Pi \p^r D_t P|_{\leb}+\sum_{0\leq m \leq r-2}|\Pi((\p^{m+1}u)\symdot \p^{r-m}P)|_{\leb}\Big).
\end{aligned}
\end{equation}}

\bigskip

\section{Control of interior and boundary terms of top order}\label{section5}

Now we come back to use Lagrangian coordinate. With a little abuse of terminology, we still define

\begin{itemize}
\item $\|f\|_{s,k} = \|\nabla^sD_t^k f\|_{\lli}$,
\item $|f|_{s,k}= |\nabla^sD_t^k f|_{\llb}$. 
\end{itemize} As stated in Section 2, our proof is coordinate-independent.

We are going to use elliptic estimates in Section \ref{section3} to reduce the interior terms in \eqref{tg} and \eqref{tglow}.

\subsection{Div-curl estimates: Full spatial derivatives of $u$ and $B$}
By the Hodge's decomposition inequality, we have 
\[
\|u\|_{r,0}\lesssim\|u\|_{0,0}+\|\dive \nabla^{r-1}u\|_{\lli}+\underbrace{\|\curl\nabla^{r-1}u\|_{\lli}+\frac{1}{2}\idt \rho Q(\p^r u,\p^r u)\dx}_{\lesssim\sqrt{K_r}+\sqrt{E_{r,0}}}.
\]and
\[
\|B\|_{r,0}\lesssim\|B\|_{0,0}+\|\underbrace{\dive\nabla^{r-1}B}_{=0}\|_{\lli}+\underbrace{\|\curl\nabla^{r-1}B\|_{\lli}+\frac{1}{2}\idt Q(\p^r B, \p^r B)\dx}_{\lesssim\sqrt{K_r}+\sqrt{E_{r,0}}}.
\]

Now we use $\dive u=-\wk D_t p$ to control $\|\dive \nabla^{r-1}u\|_{\lli}$:

\[
\dive \nabla^{r-1}u=\nabla^{r-1}\dive u=-\nabla^{r-1}\left(\wk D_t p\right)=-\wk\nabla^{r-1}D_t p-\left[\nabla^{r-1}, \wk\right]D_t p.
\]

Hence, 
\[
\|\dive \nabla^{r-1}u\|_{\lli}\lesssim_{M}\|\swt\nabla^{r-1}D_t p\|_{\lli}+\|p\|_{r-1,0}\lesssim \sqrt{E_{r-1}}+\|p\|_{r-1,0},
\]and thus
\begin{equation}\label{ub40}
\|u\|_{r,0}+\|B\|_{r,0}\lesssim_{M}\sqrt{E_0}+\sqrt{E_r}+\sqrt{E_{r-1}}+\|p\|_{r-1,0}.
\end{equation}

\subsection{Elliptic estimates: Control of $\|B\|_{s,k}$ and $\|p\|_{s,k}$}

In this part we try to control $\|B\|_{s,k}$ and $\|p\|_{s,k}$ by using the elliptic estimates in Section 3. The only exception is $\|p\|_{1,3}$ because it has no weight function $\swt$ and thus it cannot be bounded, independently of the lower bound of $\wt$ (this lower bound goes to 0 when passing to the incompressible limit), by the terms in our proposed energy \eqref{Er}. This term will be controlled by $W_5$ after using Poincar\'e's inequality. For simplicity we only consider the top order case: $s+k=4$. 

\subsubsection*{When $s\geq 2$}

\begin{itemize}
\item $s=4,~k=0:$

By the elliptic estimates, we know $\forall\delta>0$, we have
\[
\|p\|_{4,0}:=\|\nabla^4 p\|_{\lli}\lesssim_{K,M,\volo} \delta\sum_{s\leq 4}|\Pi\nabla^s p|_{\llb}+\delta^{-1}\sum_{j\leq 2}\|\nabla^j\Delta p\|_{\lli}.
\]\label{p400}

Using the boundary tensor estimates, we have
\[
|\Pi\nabla^s p|_{\llb}\lesssim_{K,\volo}|\cnab^{s-2}\theta|_{\llb}|\nabla_N p|_{\linf}+\sum_{l=1}^{s-1}|\nabla^l p|_{\llb}.
\]

Using trace lemma and the estimates of $\|B\|_{4,0}$, we can control the second fundamental form as follows:
\begin{equation}\label{22nd}
\begin{aligned}
|\cnab^2\theta|_{\llb}&\lesssim_{K,1/\epsilon_0}|\Pi\nabla^4 P|_{\llb}+\sum_{l=1}^3 |\nabla^l P|_{\llb}\\
&\lesssim_{K,M,1/\epsilon_0} \sqrt{E_0}+\sqrt{E_4}+\sqrt{E_3}+\|p\|_{4,0}.
\end{aligned}
\end{equation}

By trace lemma and Sobolev embedding, one has $|\nabla_N p|_{\linf}\lesssim_{\volo}\|p\|_{4,0}$. Combining with the estimate above, one can pick a suitably small $\delta>0$ such that $\delta\sum_{s\leq 4}|\Pi\nabla^s p|_{\llb}$ is absorbed by LHS of \eqref{p400}, i.e.,

\begin{equation}\label{p401}
\|p\|_{4,0}\lesssim_{K,M,\volo,1/\epsilon_0}\sqrt{E_4^*}+\sum_{j\leq 2}\|\nabla^j\Delta p\|_{\lli}.
\end{equation}

\item $s=3,~k=1:$ 

Similarly as above, we first use the elliptic estimates to get $\forall\delta>0$
\[
\begin{aligned}
\|B\|_{3,1}&=\|\nabla^3 D_tB\|_{\lli}\lesssim_{K,M,\volo}\delta|\Pi \nabla^3 D_t B|_{\llb}+\delta^{-1}\sum_{j\leq 1}\|\nabla^j \Delta D_t B\|_{\lli} \\
&\lesssim_{K,M,\volo}\delta\left(|\cnab\theta|_{L^4(\p\Omega)}|\nabla_N D_t B|_{L^4(\p\Omega)}+\sum_{l=1}^{2}|\nabla^l D_t B|_{\llb}\right)\\
&~~~~~~~~~~~~~~~~+\delta^{-1}\sum_{j\leq 1}\|\nabla^j \Delta D_t B\|_{\lli} \\
&\lesssim_{K,M,\volo}\delta\bigg(|\cnab\theta|_{H^1(\p\Omega)}^{1/2}|\nabla_N D_t B|_{H^1(\p\Omega)}^{1/2}|\cnab\theta|_{L^2(\p\Omega)}^{1/2}|\nabla_N D_t B|_{L^2(\p\Omega)}^{1/2} \\
&~~~~~~~~~~~~~~~~~~~~~+\sum_{l=1}^{2}|\nabla^l D_t B|_{\llb}\bigg)+\delta^{-1}\sum_{j\leq 1}\|\nabla^j \Delta D_t B\|_{\lli},
\end{aligned}
\]where we use the Sobolev interpolation Theorem \ref{gag-ni thm} in the last step. 

By tensor estimates, one can get 
\begin{equation}\label{12nd}
\begin{aligned}
|\cnab \theta|_{\llb}&\lesssim_{K,1/\epsilon_0}|\Pi\nabla^3 P|_{\llb}+\sum_{l=1}^2 |\nabla^l P|_{\llb}\\
&\lesssim_{K,M,1/\epsilon_0} \sqrt{E_3^*}+\|p\|_{3,0}.
\end{aligned}
\end{equation}
Therefore, using Sobolev trace lemma, \eqref{22nd}, \eqref{p401}, \eqref{12nd} and Poincar\'e's inequality one has 
\[
\begin{aligned}
\|B\|_{3,1}&\lesssim_{K,M,1/\epsilon_0,\volo}\delta \left(\sqrt{E_3^*}+\sum_{j\leq 1}\|\nabla^j\Delta p\|_{\lli}\right)^{1/2}\|B\|_{2,1}^{1/2}\cdot\left(\sqrt{E_4^*}+\sum_{j\leq 2}\|\nabla^j \Delta p\|_{\lli}+\|B\|_{3,1}\right) \\
&~~~~~~~~~~~~~~~~+\delta\|B\|_{3,1}+\delta^{-1}\sum_{j\leq 1}\|\nabla^j \Delta D_t B\|_{\lli}.
\end{aligned}
\]
If we choose $\delta>0$ to be suitbaly small, then $\delta\|B\|_{3,1}$ will be absorbed to LHSof the last inequality,and thus we have
\begin{equation}\label{B311}
\begin{aligned}
\|B\|_{3,1}\lesssim_{K,M,1/\epsilon_0,\volo}\delta &\left(\sqrt{E_3^*}+\sum_{j\leq 1}\|\nabla^j\Delta p\|_{\lli}\right)^{1/2}\|B\|_{2,1}^{1/2}\\
&\times\left(\sqrt{E_4^*}+\sum_{j\leq 2}\|\nabla^j \Delta p\|_{\lli}+\|B\|_{3,1}\right)\\
&~~~~\sum_{j\leq 1}\|\nabla^j \Delta D_t B\|_{\lli}
\end{aligned}
\end{equation}for sufficiently small $\delta>0$.

Replace $B$ by $p$ in \eqref{B311}, we can get the estimates of $\|p\|_{3,1}$:
\begin{equation}\label{p311}
\begin{aligned}
\|p\|_{3,1}&\lesssim_{K,M,1/\epsilon_0,\volo}\delta \left(\sqrt{E_3^*}+\sum_{j\leq 1}\|\nabla^j\Delta p\|_{\lli}\right)\|p\|_{2,1}^{1/2}\cdot\left(\sqrt{E_4^*}+\sum_{j\leq 2}\|\nabla^j \Delta p\|_{\lli}+\|p\|_{3,1}\right) \\
&~~~~~~~~~~~~~~~~+\sum_{j\leq 1}\|\nabla^j \Delta D_t p\|_{\lli}
\end{aligned}
\end{equation}for sufficiently small $\delta>0$.

\item $s=2,~k=2$:

Similarly as above, one can get the following estimates by elliptic estimate:
\[
\begin{aligned}
\|B\|_{2,2}&=\|\nabla^2 D_t^2B\|_{\lli}\lesssim_{K,M,\volo}\delta|\Pi \nabla^2 D_t^2 B|_{\llb}+\delta^{-1}\|\Delta D_t^2 B\|_{\lli} \\
&\lesssim_{K,M,\volo}\delta\left(|\theta|_{\linf}|\nabla_N D_t^2 B|_{L^2(\p\Omega)}+|\nabla D_t^2 B|_{\llb}\right)\\
&~~~~~~~~~~~~~~~~+\delta^{-1}\|\Delta D_t^2 B\|_{\lli} \\
&\lesssim_{K,M,\volo}\delta\|B\|_{2,2}+\delta^{-1}\|\Delta D_t^2 B\|_{\lli},
\end{aligned}
\]where the last step we use the a priori assumption $|\theta|\leq K$ and Sobolev trace lemmma. Now choosing $\delta>0$ suitably small so that the $\delta$-term can be absorbed by LHS, one gets
\begin{equation}\label{B221}
\|B\|_{2,2}\lesssim_{K,M,\volo}\|\Delta D_t^2 B\|_{\lli}.
\end{equation}

Also one can get 
\begin{equation}\label{p221}
\|p\|_{2,2}\lesssim_{K,M,\volo}\|\Delta D_t^2 p\|_{\lli}.
\end{equation}
\end{itemize}

\subsubsection*{When $s\leq 1$}

We already know $\|B\|_{0,4}=\|D_t^4 B\|_{\lli}$ is a part of $\sqrt{E_{0,4}}$ and $\|B\|_{1,3}=\|\nabla D_t^3B\|_{\lli}$ is a part of the parabolic equation energy $H_4$. From \eqref{tg} and \eqref{tglow} we know there must be a weight function $\swt$ or $\wt$ multiplying on $D_t^4 p$ as long as $D_t^4 p$ appears, and thus can also be controlled by either $\sqrt{E_{0,4}}$ or $W_4$.

The only term we need to do extra work is $\|p\|_{1,3}$, because in our imposed energy function, all the terms that can control $\nabla D_t^3 p$ contain a weight function $\wt$ or $\swt$. Hence, one cannot get the uniform control with respect to the sound speed $c:=\sqrt{p'(\rho)}$ as it goes to infinity when passing to the incompressible limit. 

To avoid this problem, we use Poincar\'e's inequality to get 
\[
\|p\|_{1,3}=\|\nabla D_t^3 p\|_{\lli}\lesssim_{\volo}\|\nabla^2 D_t^3 p\|_{\lli}=\|p\|_{2,3}.
\] In other words, we make it to be a higher order term of the form $\|p\|_{s,k+1}$(recall $s+k=4$), which can be reduced to the control of $5$-th order wave equation. We will deal with these terms in the next section.

\subsection{Elliptic estimates: Reduction of higher order terms}

So far, what remained to be controlled are of the form $\|(B\cdot\nabla)B\|_{s,k},\|p\|_{s,k+1},~\|B\|_{s,k+1}$, tangential projections $|\Pi\nabla^s D_t^{k+1}P|_{\llb}$ and the wave equation of $p$ coupled with the parabolic equation of $B$ when $s+k=4$. In this section, we will reduce all the control of $\|p\|_{s,k+1},~\|B\|_{s,k+1}$ and $|\Pi\nabla^s D_t^{k+1}P|_{\llb}$ to that of wave equation and parabolic equation.

First we would like to control those interior higher order terms. In fact we cannot control these terms directly. Instead, we need to control $\|(B\cdot \nabla)B\|_{s,k}$, $\|B\|_{s,k+1}=\|D_t B\|_{s,k}$ together with $|B|_{s,k}+|B|_{s-1, k+1}$; $\|p\|_{s,k+1}=\|D_tp\|_{s,k}$ together with $|p|_{s-1,k+1}$ if $s\geq 2$, so that we can use Young's inequality to absorb the higher order terms. While for $s\leq 1$, weight functions must appear as long as all these terms containing $p$ appear in the previous estimates.

\begin{itemize}
\item $s=4,~k=0:$ We consider $$\|\nabla^4((B\cdot\nabla )B)\|_{\lli}+\|\nabla^4 D_t B\|_{\lli}+|\nabla^4 B|_{\llb}+|\nabla^3 D_t B|_{\llb}.$$ 

Since $(B\cdot\nabla)B=0$ on $\p\Omega$, by elliptic estimates, we have $\forall\delta>0$:
\[
\begin{aligned}
&~~~~\|\nabla^4((B\cdot\nabla)B)\|_{\lli}+\|\nabla^4 D_t B\|_{\lli}+|\nabla^4 B|_{\llb}+|\nabla^3 D_t B|_{\llb}\\
&\lesssim_{K,M,\volo}\delta\left(|\Pi\nabla^4(B\cdot\nabla)B|_{\llb}+|\Pi\nabla D_t^4 B|_{\llb}\right)\\
&~~~~~~~~~~~~~+\delta^{-1}\sum_{j\leq 2}\left(\|\nabla^j\Delta(B\cdot\nabla B)\|_{\lli}+\|\nabla^j\Delta D_t B\|_{\lli}\right)\\
&\lesssim_{K,M,\volo}\delta\bigg(|\cnab^2\theta|_{\llb}(|\nabla_N (B\cdot\nabla B)|_{\linf}+|\nabla_N D_t B|_{\linf}) \\
&~~~~~~~~~~~~+\sum_{l=1}^{3}|\nabla^l(B\cdot\nabla B)|_{\llb}+|\nabla^l D_t B|_{\llb}\bigg)\\
&~~~~~~~~~~~~~+\delta^{-1}\sum_{j\leq 2}\left(\|\nabla^j\Delta(B\cdot\nabla B)\|_{\lli}+\|\nabla^j\Delta D_t B\|_{\lli}\right).
\end{aligned}
\]

Using Sobolev trace lemma and Poincar\'e's inequality, we know 
\[
\begin{aligned}
&~~~~|\nabla_N (B\cdot\nabla B)|_{\linf}+|\nabla_N D_t B|_{\linf}+\sum_{l=1}^{3}|\nabla^l(B\cdot\nabla B)|_{\llb}+|\nabla^l D_t B|_{\llb}\\
&\lesssim_{K,M,\volo}\|\nabla^4((B\cdot\nabla)B)\|_{\lli}+\|\nabla^4 D_t B\|_{\lli}+|\nabla^4 B|_{\llb}+|\nabla^3 D_t B|_{\llb},
\end{aligned}
\] and thus these $\delta$-terms can be absorbed by LHS of last inequality if we choose a suitably small $\delta>0$ ,i.e.,

\begin{equation}
\begin{aligned}\label{B411}
&~~~~\|\nabla^4((B\cdot\nabla)B)\|_{\lli}+\|\nabla^4 D_t B\|_{\lli}+|\nabla^4 B|_{\llb}+|\nabla^3 D_t B|_{\llb}\\
&\lesssim_{K,M,1/\epsilon_0,\volo}\delta\underbrace{\left(\sqrt{E_4^*}+\sum_{j\leq 2}\|\nabla^j\Delta p\|_{\lli}\right)}_{\text{estimates of } |\cnab^2\theta|_{\llb}}\left(\|\nabla^4((B\cdot\nabla)B)\|_{\lli}+\|\nabla^4 D_t B\|_{\lli}\right) \\
&~~~~~~~~~~~~~+\sum_{j\leq 2}\left(\|\nabla^j\Delta(B\cdot\nabla B)\|_{\lli}+\|\nabla^j\Delta D_t B\|_{\lli}\right).
\end{aligned}
\end{equation} holds for sufficiently small $\delta>0$.

One can mimic the steps above to get a similar estimate on $\|p\|_{4,1}+|p|_{3,1}$:

\begin{equation}\label{p411}
\begin{aligned}
&~~~~\|\nabla^4 D_t p\|_{\lli}+|\nabla^3 D_t p|_{\llb}\\
&\lesssim_{K,M,\volo}\delta\left(\sqrt{E_4^*}+\sum_{j\leq 2}\|\nabla^j\Delta p\|_{\lli}\right)\|\nabla^4 D_t p\|_{\lli}+\sum_{j\leq 2}\|\nabla^j\Delta D_t p\|_{\lli}.
\end{aligned}
\end{equation} holds for sufficiently small $\delta>0$.

\begin{rmk}
When $k>0$, the estimates of $\|(B\cdot\nabla)B\|_{s,k}$ can be reduced to that of $\|B\|_{s+1,k}$ plus $\|B\|_{s+1,k-1}$ together with $\|u\|_{s+1,k-1}$, while the latter two terms have been controlled above.

\[
\nabla^sD_t^k(B\cdot \nabla)B=(B\cdot\nabla)\nabla^{s}D_t^k B+\nabla^s\left[D_t^k, B\cdot \nabla\right]B +[\nabla^s,B\cdot\nabla] D_t^k B,
\] in which the commutator terms consist of $\leq 4$ derivatives of $B$ or $u$ multiplying the a priori quantities by Leibniz rule and \eqref{commutator3}. One has
\[
\|(B\cdot\nabla)B\|_{s,k}\lesssim\|B\|_{s+1,k}+\left(\|B\|_{s+1,k-1}+\|u\|_{s+1,k-1}\right).
\] Therefore, it suffices to consider $\|B\|_{s,k+1}$ in the rest of this part.
\end{rmk}

\item $s=3,~k=1:$ Using elliptic estimates, tensor estimates for the tangential projection and Sobolev interpolation Theorem \ref{gag-ni thm}, we get: $\forall\delta>0$,
\[
\begin{aligned}
&~~~~\|\nabla^3 D_t^2 B\|_{\lli}+|\nabla^2 D_t^2B|_{\llb} \\
&\lesssim_{K,M,\volo}\delta\left(|\cnab\theta|_{L^4(\p\Omega)}|\nabla_N D_t^2 B|_{L^4(\p\Omega)}+\sum_{l=1}^{2}|\nabla^l D_t^2 B|_{\llb}\right)+\delta^{-1}\sum_{j\leq 1}\|\nabla^j\Delta D_t^2 B\|_{\lli} \\
&\lesssim_{K,M,\volo}\delta\left(|\cnab\theta|_{H^1(\p\Omega)}^{1/2}|\nabla_N D_t^2 B|_{H^1(\p\Omega)}^{1/2}|\cnab\theta|_{L^2(\p\Omega)}^{1/2}|\nabla_N D_t^2 B|_{L^2(\p\Omega)}^{1/2}+\sum_{l=1}^{2}|\nabla^l D_t^2 B|_{\llb}\right) \\
&~~~~~~~~~~~~~+\delta^{-1}\sum_{j\leq 1}\|\nabla^j\Delta D_t^2 B\|_{\lli}.
\end{aligned}
\]
Using Sobolev trace lemma and Poincar\'e's inequality, it holds that $$|\nabla_N D_t^2 B|_{H^1(\p\Omega)}+\sum_{l=1}^{2}|\nabla^l D_t^2 B|_{\llb}\lesssim_{\volo} \|\nabla^3 D_t^2 B\|_{\lli}+|\nabla^2 D_t^2B|_{\llb}.$$ Hence, one can choose a suitably small delta $\delta>0$ to abosrb these $\delta$-terms to LHS. Combining with the estimates of $\theta$ \eqref{22nd} and \eqref{12nd}, we have
\begin{equation}\label{B321}
\begin{aligned}
&~~~~\|\nabla^3 D_t^2 B\|_{\lli}+|\nabla^2 D_t^2B|_{\llb}\\
&\lesssim_{K, M, 1/\epsilon_0,\volo}\delta \left(\sqrt{E_3^*}+\sum_{j\leq 1}\|\nabla^j\Delta p\|_{\lli}\right)^{1/2}|B|_{1,2}^{1/2}\cdot\left(\sqrt{E_4^*}+\sum_{j\leq 2}\|\nabla^j \Delta p\|_{\lli}+|B|_{2,2}\right) \\
&~~~~~~~~~~~~+\sum_{j\leq 1}\|\nabla^j\Delta D_t^2 B\|_{\lli},
\end{aligned}
\end{equation}for sufficiently small $\delta>0$.
 Similarly we have the same type estimate on $p$:
\begin{equation}\label{p321}
\begin{aligned}
&~~~~\|\nabla^3 D_t^2 p\|_{\lli}+|\nabla^2 D_t^2p|_{\llb}\\
&\lesssim_{K, M, 1/\epsilon_0, \volo}\delta \left(\sqrt{E_3^*}+\sum_{j\leq 1}\|\nabla^j\Delta p\|_{\lli}\right)^{1/2}|p|_{1,2}^{1/2}\cdot\left(\sqrt{E_4^*}+\sum_{j\leq 2}\|\nabla^j \Delta p\|_{\lli}+|p|_{2,2}\right) \\
&~~~~~~~~~~~~+\sum_{j\leq 1}\|\nabla^j\Delta D_t^2 p\|_{\lli},
\end{aligned}
\end{equation} holds for sufficiently small $\delta>0$.

\item $s=2,~k=2:$ Since $|\theta|\leq K$ is part of the a priori assumption, then one can mimic the proof above to get $\forall\delta>0$
\[
\begin{aligned}
&~~~~\|\nabla^2 D_t^3 B\|_{\lli}+|\nabla D_t^3 B|_{\llb}\\
&\lesssim_{K,M,\volo}\delta \left(|\theta|_{\linf}|\nabla_N D_t^3 B|_{\llb}+|\nabla D_t^3 B|_{\llb}\right)+\delta^{-1}\|\Delta D_t^3 B\|_{\lli}\\
&\lesssim_{K,M,\volo}\delta (\|\nabla^2 D_t^3 B\|_{\lli}+|\nabla D_t^3 B|_{\llb})+\delta^{-1}\|\Delta D_t^3 B\|_{\lli}.
\end{aligned}
\]
 Choosing $\delta>0$ suitably small to absorb the $\delta$-term, one gets
\begin{equation}\label{B23}
\|\nabla^2 D_t^3 B\|_{\lli}+|\nabla D_t^3 B|_{\llb}\lesssim_{K,M,\volo}\|\Delta D_t^3 B\|_{\lli},
\end{equation}as well as the version of $p$
\begin{equation}\label{p23}
\|\nabla^2 D_t^3 p\|_{\lli}+|\nabla D_t^3 p|_{\llb}\lesssim_{K,M,\volo}\|\Delta D_t^3 p\|_{\lli}.
\end{equation}

\item $s\leq 1:$ From the previous estimates, we know such terms must appear together with a weight function $\swt$ or $\wt$ (e.g., see \eqref{tg}). Therefore they can be directly controlled by the imposed energy function:
\begin{equation}\label{Bp14}
\|\wt \nabla D_t^4 p\|_{\lli}+\|\swt D_t^ 5 p\|_{\lli}+\|\nabla D_t^4 B\|_{\lli}\lesssim_{c_0} \sqrt{E_4}.
\end{equation}
For $D_t^5 B$, it only appears once in the term $\|B\|_{0,4}\|B\|_{0,5}$ in \eqref{tg}. We can control its time integral because it is still a part of $E_4$:
\[
\begin{aligned}
&~~~~~\int_0^T \|D_t^4 B(t)\|_{\lli}\|D_t^5 B(t)\|_{\lli}\dt \\
&\leq \delta \int_0^T\|D_t^5 B(t)\|_{\lli}^2 \dt+\frac{1}{4\delta}\int_0^T\|D_t^4 B(t)\|_{\lli}^2 \dt\\
&=\delta H_5^2(T)+\frac{1}{4\delta}H_4^2(T),
\end{aligned}
\] where one can pick $\delta>0$ sufficiently small to \textbf{absorb this term in the final estimates} of $E_4$.
\end{itemize}

Apart from the tangential projection terms, we have reduced all the other terms in \eqref{tg} to the control of $\|\nabla^{s-2}\Delta D_t^{k}B\|_{\lli}$, $\|\nabla^{s-2}\Delta D_t^{k}p\|_{\lli}$, $\|\nabla^{s-2}\Delta D_t^{k+1}B\|_{\lli}$ and $\|\nabla^{s-2}\Delta D_t^{k+1}p\|_{\lli}$ for $s\geq 2$, which will be controlled through the 4th and 5th order wave equation of $p$ and the parabolic equation of $B$. Those tangential projections will be bounded after we control $r$-th order wave equation.

\section{Estimates of wave and heat equation of $\leq 4$ order}\label{section6}

In this section we are going to give a common control for $W_{r+1}^2+H_{r+1}^2$, which is the only thing left to close the a priori bound. We will first control the energy of 3rd and 4th order wave/heat equation in order to bound interior terms and tangential projections by $E_4^*$.

Recall the heat equation of $B$ is 
\begin{equation}\label{heat0}
D_t B-\lambda\Delta B=B\cdot\nabla u-B\dive u=B\cdot u+B\wk D_t p.
\end{equation}

Taking divergence of the first equation of MHD system \eqref{CMHD}, then commuting $\nabla_i$ with $\rho D_t$, one has
\[
\rho D_t \dive u-\nabla_i(B^k\nabla_k B_i)+\Delta\left( \frac{1}{2}|B|^2\right)=-\Delta p+\left[\rho D_t,\nabla_i\right] u^i.
\]

Plugging the continuity equation, $\dive B=0$ and $D\rho=\rho'(p)Dp$ ($D=\nabla$ or $D_t$) into the last equation, one gets the wave equation of $p$:
\begin{equation}\label{wave0}
\rho'(p)D_t^2 p-\Delta p=B^k\Delta B_k+w,
\end{equation}where 
\begin{equation}\label{w}
w=\left(\wk-\rho''(p)\right)(D_tp)^2+\wk\nabla_i p\left((B\cdot\nabla B_i)-\nabla_i P\right)+\rho\nabla_i u^k\nabla_k u^i-\nabla^i B_k\nabla_k B^i +|\nabla B|^2.
\end{equation}

\begin{rmk}
The derivation of \eqref{w} is: The first term $\left(\wk-\rho''(p)\right)(D_tp)^2$ comes from $D_t\dive u=D_t(-\wk D_t p)$. The second and the third term come from $\left[\rho D_t,\nabla_i\right] u^i$. The term $\nabla^i B_k\nabla_k B^i$ comes from $\nabla_i(B^k\nabla_k B_i)$ and $\dive B=0$. The last term appears because $\Delta\left( \frac{1}{2}|B|^2\right)=B\cdot \Delta B+|\nabla B|^2$.
\end{rmk}

\subsection{Higher order equations: Reduction of $\nabla^{s-2}\Delta D_t^{k+1}p$ and $\nabla^{s-2}\Delta D_t^{k+1}B$}

Now we are going to derive the higher order heat/wave equation. Taking $D_t^k$ on the heat equation, one gets
\begin{equation}\label{heatk+1}
\begin{aligned}
D_t^{k+1}B-\lambda\Delta D_t^k B&=\lambda [D_t^k,\Delta]B+(B\cdot\nabla)D_t^k u+B\wk D_t^{k+1}p\\
&~~~~+[D_t^k, B\cdot\nabla]u+\left[D_t^k, B\wk\right]D_t p\\
&=:h_{k+1}^{\lambda}+h_{k+1}+\hh_{k+1},
\end{aligned}
\end{equation}
where
\begin{equation}\label{hk+1}
\begin{aligned}
\hl_{k+1}&:=\lambda [D_t^k,\Delta]B, \\
h_{k+1}&:=(B\cdot\nabla)D_t^k u+B\wk D_t^{k+1}p,\\
\hh_{k+1}&:=[D_t^k, B\cdot\nabla]u+\left[D_t^k, B\wk\right]D_t p.
\end{aligned}
\end{equation}

Similarly, taking $D_t^k$ on the wave equation, one gets
\begin{equation}\label{wavek+10}
\begin{aligned}
\rho'(p) D_t^{k+2} p-\Delta D_t^k p&=[D_t^k, \Delta]p+ B D_t^k \Delta B+[D_t^k,B^l]\Delta B_l \\
&~~~~+D_t^k w+\wh_{k+1},
\end{aligned}
\end{equation}where
\begin{equation}
\wh_{k+1}=\sum_{i_1+\cdots+i_m=k+2,~1\leq i_l\leq k+1}\rho^{(m)}(p)(D_t^{i_1}p)\cdots(D_t^{i_l}p).
\end{equation}

Recall from \eqref{heat0} and \eqref{heatk+1} that $D_t^k\Delta B=\lambda^{-1}(D_t^{k+1}B-h_{k+1}-\hh_{k+1})$. We can rewrite the $(k+1)$-th order wave equation as 
\begin{equation}\label{wavek+1}
\wt D_t^{k+2}p-\Delta D_t^k p=w_{k+1}+\wh_{k+1}+\wl_{k+1},
\end{equation}where
\begin{equation}\label{wk+1}
\begin{aligned}
w_{k+1}&=D_t^kw+[D_t^k,\Delta]p, \\
\wh_{k+1}&\text{ defined as above}, \\
\wl_{k+1}&=\lambda^{-1}\left(B\cdot D_t^{k+1}B-B\cdot h_{k+1}-B\cdot\hh_{k+1}+[D_t^k, B^l]\big(D_t B_l -(B\cdot \nabla)u_l-B_l\wk D_t p\big)\right).
\end{aligned}
\end{equation}

From the precise form of the commutators \eqref{commutator4}, we know all the terms onthe RHS of \eqref{hk+1} and \eqref{wk+1} are of $\leq k+1$ derivatives.

\subsection{Energy estimates for $W_3$ and $H_3$: Reduced to the a priori quantities}\label{whe3}

We first give the control for $3^{rd}$ order wave/heat equation. This can give us the control of $\|\Delta D_t p\|_{\lli},~\|\Delta D_t B\|_{\lli}$ and $\|\nabla\Delta p\|_{\lli},~\|\nabla\Delta B\|_{\lli}$ which helps us close the estimates for the terms with 3 derivatives, i.e., $\|u\|_{3,0},\|p\|_{3,0},\|p\|_{2,1}$ and $\|B\|_{2,1}$.

Let $k=2$ in \eqref{heatk+1} and \eqref{wavek+1}, and then we have 
\begin{equation}\label{heatlow3}
\begin{aligned}
\Delta D_t B&=\lambda^{-1}(D_t^2 B-h_2-\hh_2-\hl_2), \\
\nabla\Delta B&=\lambda^{-1}\nabla(D_t B-(B\cdot\nabla B)-B\wk D_t p).
\end{aligned}
\end{equation} as  well as
\begin{equation}\label{wavelow3}
\begin{aligned}
\Delta D_t p&=\wt D_t^3 p-\wl_2-w_2-\wh_2, \\
\nabla\Delta p&=\nabla(\wt D_t^2 p-\wl_1-w_1-\wh_1).
\end{aligned}
\end{equation}

Therefore one has 
\begin{equation}\label{w3h30}
\begin{aligned}
\|\Delta D_t B\|_{\lli}&\leq\lambda^{-1}(\|D_t^2 B\|_{\lli}-\|h_2\|_{\lli}-\|\hh_2\|_{\lli}-\|\hl_2\|_{\lli}), \\
\|\nabla \Delta B\|_{\lli}&\lesssim_M \lambda^{-1}(\|\nabla D_t B\|_{\lli}+\|\nabla(B\cdot \nabla B)\|_{\lli}+\|\wt \nabla D_t p\|_{\lli}), \\
\|\Delta D_t p\|_{\lli}&\leq \|\wt D_t^3 p\|_{\lli}+\|w_2\|_{\lli}+\|\wh_2\|_{\lli}+\|\wl_2\|_{\lli} \\
\|\nabla \Delta p\|_{\lli}&\leq \|\wt \nabla D_t^2 p\|_{\lli}+\|\nabla w_1\|_{\lli}+\|\nabla \wh_1\|_{\lli}+\|\nabla \wl_1\|_{\lli}.
\end{aligned}
\end{equation}

We notice that all the terms except $\|\wt D_t^3 p\|_{\lli}$ and $\|\wt \nabla D_t^2 p\|_{\lli}$ on the RHS of \eqref{w3h30} are of $\leq 2$ derivatives and thus are our a priori assumed quantities. Therefore, we have
\begin{equation}\label{w3h3}
\|\Delta D_t B\|_{\lli}+\|\nabla \Delta B\|_{\lli}+\|\Delta D_t p\|_{\lli}+\|\nabla \Delta p\|_{\lli}\lesssim_{M,c_0} \frac{1}{\lambda}(1+W_3)\leq \frac{1}{\lambda}(1+\sqrt{E_2^*}).
\end{equation}

Combining with the results in the last section, we actually have that 
\begin{equation}\label{sk3}
\sum_{s+k=3,s\geq 2}\|p\|_{s,k}+\sum_{s+k=3}\|B\|_{s,k}+\|\swt \nabla D_t p\|_{\lli}+\|\wt D_t^3 p\|_{\lli}\lesssim_{K,M,c_0,\volo,1/\epsilon_0,1/\lambda} 1+\sqrt{E_3^*}.
\end{equation}

\subsection{Energy estimates for $W_4$ and $H_4$: Close the estimates for 4-th order derivatives}

The computation in the previous section shows that we need to bound $$\sum_{j=0}^2\|\nabla^{j}\Delta D_t^{2-j} p\|_{\lli}+\|\nabla^{j}\Delta D_t^{2-j} B\|_{\lli}$$ by $\sqrt{E_4^*}$ in order to give a common control for those terms with $\leq 4$ derivatives, i.e., $\|u\|_{r,0}, \|B\|_{s,k}$ and $\|p\|_{s,k}$ for $s+k=4,~s\geq 2$. The proof is almost the same as Section \ref{whe3}.

Let $k=3$ in \eqref{heatk+1} and \eqref{wavek+1}, and then we have 
\begin{equation}\label{heatlow4}
\begin{aligned}
\Delta D_t^2 B&=\lambda^{-1}(D_t^3 B-h_3-\hh_3-\hl_3), \\
\nabla\Delta D_t B&=\lambda^{-1}\nabla(D_t^2 B-h_2-\hh_2-\hl_2), \\
\nabla^2\Delta  B&=\lambda^{-1}\nabla^2(D_t B-(B\cdot \nabla )u-B\wk D_t p);
\end{aligned}
\end{equation} as  well as
\begin{equation}\label{wavelow4}
\begin{aligned}
\Delta D_t^2 p&=\wt D_t^4 p-\wl_3-w_3-\wh_3, \\
\nabla\Delta D_t p&=\nabla(\wt D_t^3 p-\wl_2-w_2-\wh_2), \\
\nabla^2\Delta p&=\nabla^2(\wt D_t^2 p-\wl_1-w_1-\wh_1).
\end{aligned}
\end{equation}

Again, one can notice that all the terms except $\|\wt D_t^4 p\|_{\lli}$ and $\|\wt \nabla D_t^3 p\|_{\lli}$ on the RHS of \eqref{heatlow4} and \eqref{wavelow4} are of $\leq 3$ derivatives and thus can be bounded by \eqref{sk3}. Therefore, we have
\begin{equation}\label{w4h4}
\begin{aligned}
&~~~~\|\Delta D_t^2 B\|_{\lli}+\|\nabla \Delta D_t B\|_{\lli}+\|\nabla^2 \Delta B\|_{\lli}+\|\Delta D_t^2 p\|_{\lli}+\|\nabla \Delta D_t p\|_{\lli}+\|\nabla^2 \Delta p\|_{\lli}\\
&\lesssim_{M,c_0}\|D_t^4 p\|_{\lli}+\|\nabla D_t^3 p\|_{\lli}+\|\nabla^2 D_t^2 p\|_{\lli}+(\text{terms of }\leq 3\text{ derivatives})\\
&\lesssim_{M,c_0}  \frac{1}{\lambda}(1+\sqrt{E_4^*}).
\end{aligned}
\end{equation}

Now, \eqref{w3h3} and \eqref{w4h4} help us to bound the second fundamental form $\cnab^2\theta$ on the boundary and thus all the interior terms $\|B\|_{s,k},\|p\|_{s,k}$:
\begin{itemize}

\item Control of $\theta$: 

Combining \eqref{22nd}, \eqref{p401} and \eqref{w4h4}, one gets
\begin{equation}\label{2nd}
|\cnab^2\theta|_{\llb}+|\cnab\theta|_{H^1(\p\Omega)}\lesssim_{K,1/\lambda,1/\epsilon_0} 1+\sqrt{E_4^*}.
\end{equation}

\item Control of interior terms:

Summing up \eqref{ub40}, \eqref{p401}, \eqref{B311}, \eqref{p311}, \eqref{B221}, \eqref{p221}, then using \eqref{w4h4} and \eqref{2nd}, we have 
\begin{equation}\label{sk41}
\begin{aligned}
&~~~~\|u\|_{4,0}+\sum_{s+k=4}\|B\|_{s,k}+\sum_{s+k=4,s\geq 2}\|p\|_{s,k}+\|\swt \nabla D_t^3 p\|_{\lli}+\|\wt D_t^4 p\|_{\lli}\\
&\lesssim_{K,M,c_0,\volo,1/\epsilon_0,1/\lambda}\delta\sqrt{E_3^*}(\sum_{s+k=4,s\geq 2}\|p\|_{s,k}+\|B\|_{s,k})+\delta\sqrt{E_3^*}\sqrt{E_4^*}+\sqrt{E_4^*}\\
&~~~~~~~~~~~~~~~~~~~~~~~~+\|\Delta D_t^2 B\|_{\lli}+\|\nabla \Delta D_t B\|_{\lli}+\|\nabla^2 \Delta B\|_{\lli}\\
&~~~~~~~~~~~~~~~~~~~~~~~~+\|\Delta D_t^2 p\|_{\lli}+\|\nabla \Delta D_t p\|_{\lli}+\|\nabla^2 \Delta p\|_{\lli} \\
&\lesssim_{K,M,c_0,\volo,1/\epsilon_0,1/\lambda}\delta\sqrt{E_3^*}(\sum_{s+k=4,s\geq 2}\|p\|_{s,k}+\|B\|_{s,k})+(1+\sqrt{E_3^*})\sqrt{E_4^*}.
\end{aligned}
\end{equation}

By using Young's inequality and choosing a sufficiently small $\delta>0$ such that the $\delta$-term can be absorbed to LHS of \eqref{sk41}, one has
\begin{equation}\label{sk4}
\begin{aligned}
&~~~~\|u\|_{4,0}+\sum_{s+k=4}\|B\|_{s,k}+\sum_{s+k=4,s\geq 2}\|p\|_{s,k}+\|\swt \nabla D_t^3 p\|_{\lli}+\|\wt D_t^4 p\|_{\lli}\\
&\lesssim_{K,M,c_0,\volo,1/\epsilon_0,1/\lambda}\left(1+\sqrt{E_3^*}\right)\sqrt{E_4^*}.
\end{aligned}
\end{equation}
\end{itemize}

With the help of \eqref{sk4}, one can repeat the steps above for one more time to derive the control of $\|\nabla^s D_t^k(B\cdot\nabla B)\|_{\lli}, \|\nabla^s D_t^{k+1}B\|_{\lli}$ and $\|\nabla^s D_t^{k+1}p\|_{\lli}$ for $s\geq 2$. In fact, summing up \eqref{B411}, \eqref{p411}, \eqref{B321}, \eqref{p321}, \eqref{B23}, \eqref{p23}, then combining \eqref{sk4}, we can get the following bounds for the higher order interior terms after choosing a sufficiently small $\delta>0$ in those previous estimates to absorb the $\delta$-terms to LHS
\begin{equation}\label{sk5}
\begin{aligned}
&~~~~\sum_{s+k=4,s\geq 2}\|p\|_{s,k+1}+\|B\|_{s,k+1}+\|B\cdot\nabla B\|_{s,k}+|B|_{s-1,k+1}+|p|_{s-1,k+1}\\
&\lesssim_{K,M,c_0,\volo,1/\epsilon_0,1/\lambda}\left(1+\sqrt{E_4^*}\right)\sqrt{E_4^*}+\sum_{s+k=4,s\geq 2}\|\nabla^{s-2}\Delta D_t^{k+1} B\|_{\lli}+\|\nabla^{s-2}\Delta D_t^{k+1} p\|_{\lli}.
\end{aligned}
\end{equation}

\subsection{Control of tangential projections}

We still need to control the tangential projection terms which appears in \eqref{tg}
\begin{equation}\label{tgproj}
\begin{aligned}
&\sum_{s+k=r,s\geq 2}|\Pi\nabla^sD_t^k P|_{\llb}\bigg(|\Pi\nabla^s D_t^{k+1}P|_{\llb}+|\Pi(\nabla_i P)(\nabla^s D_t^k u^i)|_{\llb}\\
&~~~~~~~~~~~~~~~~~~~~~~~~~~~~~~~~~~~~~~~~~~~~~~~~~~~~~~~~~+\sum_{m=0}^{s-1}|\Pi((\nabla^{m+1}u)\symdot\nabla^{s-m}D_t^k P)|_{\llb}\bigg) \\
&+|\Pi\nabla^r P|_{\llb}\bigg(|\Pi\nabla^r D_t P|_{\llb}+\sum_{m=0}^{r-2}|\Pi((\nabla^{m+1}u)\symdot\nabla^{r-m}P)|_{\llb}\bigg).
\end{aligned}
\end{equation}For simplicity we still only give the details for the top order case $r=4$. Lower order cases are similar and easier.

First we control the term $|\Pi \nabla^s D_t^{k+1} P|_{\llb}$ for $s\geq 2$. We have 
\[
|\Pi\nabla^sD_t^{k+1} P|_{\llb}\lesssim_K |\cnab^{s-2}\theta(\nabla_N D_t^{k+1}P)|_{\llb}+\sum_{l=1}^{s-1}|\nabla^l D_t^{k+1}P|_{\llb}.
\]

The second term is a part of $|P|_{s-1,k+1}\lesssim|p|_{s-1,k+1}+|B|_{s-1,k+1}$ which has been controlled before, while the first term is bounded in the same way as the previous sections.

The remaining work is to bound the following terms for $s+k=4, s\geq 2$ :
\[
|\Pi(\nabla_i P)(\nabla^s D_t^k u^i)|_{\llb},~\sum_{m=0}^{s-1}|\Pi((\nabla^{m+1}u)\symdot\nabla^{s-m}D_t^k P)|_{\llb},~\sum_{m=0}^{2}|\Pi((\nabla^{m+1}u)\symdot\nabla^{4-m}P)|_{\llb}.
\]

\begin{itemize}

\item $\sum_{m=0}^{s-1}|\Pi((\nabla^{m+1}u)\symdot\nabla^{s-m}D_t^k P)|_{\llb}$ for $k>0$

\begin{itemize}
\item $s=3,~k=1:$ We use Sobolev interpolation Theorem \ref{gag-ni thm} to get
\[
\begin{aligned}
&~~~~|\Pi(\nabla u\symdot \nabla^3 D_t P)|_{\llb}+|\Pi(\nabla^2 u\symdot \nabla^2 D_t P)|_{\llb}+|\Pi(\nabla^3 u\symdot\nabla D_t P)|_{\llb}\\
&\lesssim_{K,M}|\nabla^3 D_t P|_{\llb}+|\nabla^2 D_t P|_{\llb}^{1/2}|\nabla^2 u|_{\llb}^{1/2}|\nabla^2 D_t P|_{H^1(\p\Omega)}^{1/2}|\nabla^2 u|_{H^1(\p\Omega)}^{1/2}\\
&~~~~~~~+|\nabla^3 u|_{\llb}  \\
&\lesssim_{K,M,\volo,1/\epsilon_0} (1+\sqrt{E_3^*})^2\sqrt{E_4^*}.
\end{aligned}
\]

\item $s=k=2:$ Again, we use Sobolev interpolation to get
\[
\begin{aligned}
&~~~~|\Pi(\nabla u\symdot \nabla^2 D_t^2 P)|_{\llb}+|\Pi(\nabla^2 u\symdot \nabla D_t^2 P)|_{\llb}\\
&\lesssim_{K,M}|\nabla^2 D_t^2 P|_{\llb}+|\nabla D_t^2 P|_{\llb}^{1/2}|\nabla^2 u|_{\llb}^{1/2}|\nabla D_t^2 P|_{H^1(\p\Omega)}^{1/2}|\nabla^2 u|_{H^1(\p\Omega)}^{1/2}\\
&\lesssim_{K,M,\volo,1/\epsilon_0} (1+\sqrt{E_3^*})^2\sqrt{E_4^*}.
\end{aligned}
\]

\end{itemize}

\item $\sum_{m=0}^{2}|\Pi((\nabla^{m+1}u)\symdot\nabla^{4-m}P)|_{\llb}$.

To bound this term, one needs the following lemma:
\begin{lem}\label{tensorprod}
Let $S,T$ be two tensors, then it holds that
\[
\Pi(S\symdot T)=\Pi(S)\symdot\Pi(T)+\Pi(S\symdot N)\tilde{\otimes}\Pi(N\symdot T),
\]where $\otimes$ denotes the symmetric tensor product which is defind similarly as the symmetric dot product.
\end{lem}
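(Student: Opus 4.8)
The plan is to prove the identity pointwise on $\p\Om$ by writing out both sides in components and inserting the orthogonal splitting of the metric. Recall that for tensors $S,T$ each carrying a distinguished contraction slot, $S\symdot T$ is obtained by contracting those two slots against one another — with $g^{kl}$ (equivalently $\delta^{kl}$ in the Eulerian frame) — and then symmetrizing over all the remaining free indices, exactly as in the display following \eqref{commutator1}; the symmetric tensor product $\tilde{\otimes}$ is the same operation with the contraction step omitted, and $S\symdot N$, $N\symdot T$ denote the contraction of the distinguished slot of $S$, resp.\ $T$, against the one-index object $N$, producing a tensor of rank one lower.

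First I would record that $\Pi$ acts on each index by the same endomorphism $\gamma_i^j=\delta_i^j-N_iN^j$, hence commutes with arbitrary permutations of the indices and in particular with the symmetrizers occurring in $\symdot$ and $\tilde{\otimes}$. It is therefore enough to prove the unsymmetrized version of the identity: that projecting only the free $S$-indices and the free $T$-indices of a single term $S_{\cdots}{}^{k}g_{kl}T^{l}{}_{\cdots}$ reproduces the corresponding single term of $\Pi(S)\symdot\Pi(T)$ plus that of $\Pi(S\symdot N)\tilde{\otimes}\Pi(N\symdot T)$; re-symmetrizing over all free indices then gives the lemma.

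The core computation is the substitution $g^{kl}=\gamma^{kl}+N^kN^l$ in the contracted slot. For the $\gamma^{kl}$ piece I use that conjugating the metric by the boundary projection returns the boundary metric, $\gamma^k_{\,k'}\,g^{k'l'}\,\gamma^l_{\,l'}=\gamma^{kl}$, together with $\gamma^2=\gamma$ on every already-projected index; this identifies the $\gamma^{kl}$ piece with the contraction of $\Pi(S)$ against $\Pi(T)$ over their distinguished slots, i.e.\ with $\Pi(S)\symdot\Pi(T)$ after symmetrizing. For the $N^kN^l$ piece the contraction factors as a product of $S$ with its free indices projected and its distinguished slot contracted against $N$, times $T$ with its free indices projected and its distinguished slot contracted against $N$; since projecting the free indices commutes with contracting the distinguished slot against $N$, these two factors are exactly $\Pi(S\symdot N)$ and $\Pi(N\symdot T)$, and symmetrizing their product over all free indices turns it into $\Pi(S\symdot N)\tilde{\otimes}\Pi(N\symdot T)$. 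Summing the two contributions yields the claimed formula.

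I do not expect a genuine obstacle, since the argument is purely algebraic; the only matters requiring a little care are keeping track of which slot is the distinguished contraction slot, so that $S\symdot N$ and $N\symdot T$ are unambiguous, and verifying that $\Pi$ commutes with the symmetrizers. In the application that follows, the lemma is used with $S=\nabla^{m+1}u$ (distinguished slot the $u$-index) and $T=\nabla^{4-m}P$ (distinguished slot the leading derivative index), so that $N\symdot T=\nabla_N\nabla^{3-m}P$; it splits $\Pi\big((\nabla^{m+1}u)\symdot\nabla^{4-m}P\big)$ into two pieces each of which is then estimated by combining $P|_{\p\Om}=0$ and the projection formula \eqref{21} with the tensor and elliptic estimates of Section \ref{section3}.
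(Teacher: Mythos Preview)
Your proposal is correct and follows exactly the same idea as the paper, which dispatches the lemma in a single line by invoking the orthogonal splitting $g^{ab}=\gamma^{ab}+N^aN^b$ on the contracted slot. You have simply spelled out the details of that computation, including the (trivial but necessary) observation that $\Pi$ commutes with the symmetrizers.
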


\begin{proof}
This is a straightforward result of $g^{ab}=\gamma^{ab}+N^a N^b$.
\end{proof}

The three terms in this sum are $$|\Pi((\nabla u)\symdot\nabla^4 P)|_{\llb}+|\Pi((\nabla^{2}u)\symdot\nabla^3 P)|_{\llb}+|\Pi((\nabla^{3}u)\symdot\nabla^{2}P)|_{\llb},$$ which by Lemma \ref{tensorprod} can be bounded by 
\begin{equation}\label{tgmid}
\begin{aligned}
&~~~~~~~~|\Pi\nabla u|_{\linf}|\Pi\nabla^4 P|_{\llb}+|\Pi\nabla^3 u|_{\llb}|\Pi\nabla^2P|_{\linf}\\
&~~~~+|\Pi N\cdot\nabla u|_{\linf}|\Pi N^j\nabla^3\nabla_j P|_{\llb}+|\Pi N^j\nabla^2\nabla_j u|_{\llb}|\Pi N^j\nabla\nabla_jP|_{\linf}\\
&~~~~+|\Pi\nabla^2u|_{L^4(\p\Omega)}|\Pi\nabla^3 P|_{L^4(\Omega)}+|\Pi N^j\nabla\nabla_j u|_{L^4(\p\Omega)}|\Pi N_j\nabla^2\nabla_j P|_{L^4(\Omega)}.
\end{aligned}
\end{equation}

The first and the second line of \eqref{tgmid} can be controlled by $\sqrt{E_4^*}$ times the quantities in the a priori assumptions. The terms in the last line can be bounded by using tensor interpolation in Theorem \ref{tensor interpolation}. The result is 
\begin{equation}
\begin{aligned}
&~~~~|\Pi\nabla^2u|_{L^4(\p\Omega)}|\Pi\nabla^3 P|_{L^4(\Omega)}+|\Pi N^j\nabla\nabla_j u|_{L^4(\p\Omega)}|\Pi N_j\nabla^2\nabla_j P|_{L^4(\Omega)}\\
&\lesssim_{K,M} (|\nabla u|_{\linf}+\sum_{j\leq 2}|\nabla^j v|_{\llb})|\nabla^4 P|_{\llb} \\
&~~~~~~~~+(|\nabla^2 P|_{\linf}+\sum_{j\leq 3}|\nabla^j P|_{\llb})|\nabla^3 u|_{\llb}\\
&~~~~~~~~+(|\theta|_{\linf}+|\cnab^2\theta|_{\linf})(|\nabla u|_{\linf}+\sum_{j\leq 2}|\nabla^ju|_{\llb})\\
&~~~~~~~~~~\times(|\nabla^2 P|_{\linf}+\sum_{j\leq 3}|\nabla^j P|_{\llb})\\
&\lesssim_{K,M,\volo,1/\epsilon_0} 1+E_4^*.
\end{aligned}
\end{equation}

\item $|\Pi(\nabla_i P)(\nabla^s D_t^k u^i)|_{\llb}$ for $k>0$:

For this term, we can mimic the proof of Lemma \ref{usk}, i.e., use the first equation of the MHD system $\eqref{CMHD}$ to reduce the estimates of $\nabla^s D_t^k u$ to that of $|B|_{s,k}, |p|_{s,k}, |u|_{r-1,0}$. This term has the following control:
\begin{equation}
|\Pi(\nabla_i P)(\nabla^3 D_tu^i)|_{\llb}\lesssim_{M} |B|_{4,0}+|p|_{4,0},
\end{equation}
and
\begin{equation}\label{}
|\Pi(\nabla_i P)(\nabla^2 D_t^2 u^i)|_{\llb}\lesssim_{M} |B|_{3,1}+|p|_{3,1}+|B|_{3,0}+|p|_{3,0}+\|u\|_{4,0},
\end{equation}
where these terms again have been bounded in the previous sections.
\end{itemize}
\begin{flushright}
$\square$
\end{flushright}

Now we have reduced all the estimates (except $W_5$ and $H_5$) to the control of $\|\nabla^{s-2}\Delta D_t^{k+1}B\|_{\lli}$ and  $\|\nabla^{s-2}\Delta D_t^{k+1}p\|_{\lli}$ for $s\geq 2,~s+k=4$. Considering $$E_r=\sum_{s+k=r}E_{s,k}+K_r+W_{r+1}^2+H_{r+1}^2,$$ or from the diagram \eqref{process} we can assert that all the difficulties have been reduced to the control of $W_{r+1}^2+H_{r+1}^2$. We will do this in the next section to complete all the a priori estimates.

\section{Energy estimates for $W_5$ and $H_5$: The last step to close the energy bound}\label{section7}

In this section we will give control of $\|\nabla^{s-2}\Delta D_t^{k+1}B\|_{\lli}$ and  $\|\nabla^{s-2}\Delta D_t^{k+1}p\|_{\lli}$ for $s\geq 2,~s+k=4$ together with $W_5^2+H_5^2$ to complete all the estimates under the a priori assumptions.

Again, from the heat/wave equations \eqref{heatk+1} and \eqref{wavek+1}, one has
\begin{equation}\label{heatlow5}
\begin{aligned}
\Delta D_t^3 B&=\lambda^{-1}(D_t^4 B-h_4-\hh_4-\hl_4), \\
\nabla\Delta D_t^2 B&=\lambda^{-1}\nabla(D_t^3 B-h_3-\hh_3-\hl_3), \\
\nabla^2\Delta D_t B&=\lambda^{-1}\nabla^2(D_t^2 B-h_2-\hh_2-\hl_2);
\end{aligned}
\end{equation} as  well as
\begin{equation}\label{wavelow5}
\begin{aligned}
\Delta D_t^3 p&=\wt D_t^5 p-\wl_4-w_4-\wh_4, \\
\nabla\Delta D_t^2 p&=\nabla(\wt D_t^4 p-\wl_3-w_3-\wh_3), \\
\nabla^2\Delta D_t p&=\nabla^2(\wt D_t^3 p-\wl_2-w_2-\wh_2).
\end{aligned}
\end{equation}

As one can see from the wave equation \eqref{wavek+10}, the estimates of $\nabla^{s-2}\Delta D_t^{k+1}p$ can be converted to that of $\nabla^{s-2}(\wt D_t^{k+3}p)$ and $\nabla^{s-2}\Delta D_t^{k+1}B$, i.e., $\wt D_t^5 p, \swt \nabla D_t^4 p$ , $\nabla^2 D_t^3 p$, $\nabla^3 D_t^2 p$ (this one appears in some commutators) and $\nabla^{s-2}\Delta D_t^{k+1}B$ plus the other terms with $\leq 4$ derivatives. On one hand, $\wt D_t^5 p$ and $\swt \nabla D_t^4 p$ is a part of $W_5$, while $\nabla^2 D_t^3 p$ and $\nabla^3 D_t^2 p$ can again be simplified to $\wt D_t^5 p$ and $\nabla D_t^4 p$ after using elliptic estimate and invoking wave equation. The energy $W_5$ will be controlled together with $H_5$. On the other hand, from \eqref{heatlow5}, one finds that $\nabla^{s-2}\Delta D_t^{k+1}B$ can be reduced to $\nabla^{s-2}D_t^{k+2}B$ plus other terms with $\leq 4$ derivatives. In other words, $\nabla^{s-2}\Delta D_t^{k+1}B$ can all be reduced to the estimates of 4-derivative terms computed in the previous sections. Therefore, \textbf{all the difficulties are further reduced to seek a common control of $W_5$ and $H_5$ by those terms with $\leq 4$ derivatives.} 

\bigskip

\subsubsection*{Heat equation}

\eqref{heatk+1} gives us the 5-th order heat equation for $B$ is
\begin{equation}\label{heat5}
D_t^5 B-\lambda\Delta D_t^4 B=h_5+\hh_5+\hl_5.
\end{equation}

Multiplying $D_t^5 B$ on both sides of \eqref{heat5}, integrating in $y\in\Omega$, then integrating by part to eliminiate the Laplacian, we get 
\[
\begin{aligned}
&~~~~\io |D_t^5 B|^2~ J\dy+\frac{\lambda}{2}\frac{d}{dt}\io |\nabla D_t^4 B|^2~ J\dy \\
&=\io (h_5+\hh_5+\hl_5)\cdot D_t^5 B ~J\dy +\lambda\io \nabla D_t^4 B \cdot([D_t, \nabla]D_t^4 B)~ J\dy-\lambda\io \nabla D_t^4 B\cdot D_t^5 B ~\nabla J\dy.
\end{aligned}
\]

Then we integrate the last identity in time $t\in[0,T]$ for some $T>0$ and the use H\"older's inequality, Young's inequality to get: $\forall\delta>0$,
\begin{equation}\label{H51}
\begin{aligned}
&~~~~H_5^2(T)-H_5^2(0)=\int_0^T\io |D_t^5 B|^2~ J\dy\dt+\frac{\lambda}{2}\frac{d}{dt}\io |\nabla D_t^4 B|^2~ J\dy\\
&=\int_0^T\left(\io (h_5+\hh_5+\hl_5)\cdot D_t^5 B ~J\dy +\lambda\io \nabla D_t^4 B \underbrace{[D_t, \nabla]D_t^4 B~ J}_{=\nabla u\symdot \nabla D_t^4 B}\dy-\lambda\io \nabla D_t^4 B\cdot D_t^5 B ~\nabla J\dy\right)\dt\\
&\lesssim_{M}\|D_t^5 B\|_{\lylt}^2\left(\|h_5\|_{\lylt}+\|\hh_5\|_{\lylt}+\|\hl_5\|_{\lylt}\right)\\
&~~~~+ \int_0^T H_5^2(t)\dt +\|D_t^5 B\|_{\lylt}\|\nabla D_t^4 B\|_{\lylt}\\
&\lesssim \delta \int_0^T\|D_t^5B\|_{\lli}^2\dt+\frac{1}{4\delta}\int_0^T\|h_5\|_{\lli}^2+\|\hh_5\|_{\lli}^2+\|\hl_5\|_{\lli}^2\dt +\int_0^T H_5^2(t)\dt.
\end{aligned}
\end{equation}

Choosing a suitably small $\delta>0$ such that the first term in the last step can be absorbed by LHS of \eqref{H51},and thus we have 
\begin{equation}\label{H52}
H_5^2(T)-H_5^2(0)\lesssim_M \int_0^T\|h_5\|_{\lli}^2+\|\hh_5\|_{\lli}^2+\|\hl_5\|_{\lli}^2\dt +\int_0^T H_5^2(t)\dt.
\end{equation}

Now we are going to control $\|h_5\|_{\lli}$, $\|\hl_5\|_{\lli}$ and $\|\hh_5\|_{\lli}$. The first two terms are 5-th order terms.
\begin{itemize}
\item Control of $\|\hh_5\|_{\lli}$:

We have 
\[
\begin{aligned}
\hh_5&=[D_t^k, B\cdot\nabla]u+[D_t^k, B\wk]D_t p\\
&=\sum_{m=1}^{4} C_4^l D_t^m B^l\nabla_lD_t^m u+ D_t^m\left( B\wk\right)D_t^{5-m}p+ D_t^{4-m} B^l ([D_t^m,\nabla_l]u),
\end{aligned}
\] where all the terms with $\geq 3$ derivatives are $\|u\|_{1,3},\|u\|_{1,2},\|B\|_{0,4},\|B\|_{0,3},\|\wt D_t^4 p\|_{\lli}$ and $\|\swt D_t^3p\|_{\lli}$. Hence we have the estimates for $\hh_5$ that
\begin{equation}\label{hh5}
\|\hh_5\|_{\lli}\lesssim_M (1+ \sqrt{E_4^*})^2.
\end{equation}

Before coming to control $\|\hl_5\|_{\lli}$ and $\|h_5\|_{\lli}$, we need the following lemma to convert the terms containing 5 derivatives of $u$ to that of $p$ and $B$ by using the first equation of the MHD system \eqref{CMHD}.

\begin{lem}\label{u5}
We have the following estimates for $u$:
\[
\|\nabla D_t^4 u\|_{\lli}\lesssim_M \|B\|_{2,3}+\|p\|_{2,3}+1+E_4^*,
\]and
\[
\|\Delta D_t^3 u\|_{\lli}\lesssim_M \|\nabla\Delta D_t^2 B\|_{\lli}+\|\nabla\Delta D_t^2 p\|_{\lli}+1+E_4^*.
\]
\end{lem}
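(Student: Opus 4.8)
The plan is to use the first equation of \eqref{CMHD}, solved for $D_t u$, exactly as in Lemma \ref{usk}, but now applied to the five-derivative quantities $\nabla D_t^4 u$ and $\Delta D_t^3 u = \nabla^2 D_t^3 u$ (up to a curl term that is harmless here). Writing $\rho D_t u = B\cdot\nabla B - \nabla p - (\nabla B)\cdot B = B\cdot\nabla B - \nabla P$, I divide by $\rho$ and commute. For the first estimate, apply $\nabla D_t^3$ to $D_t u = \tfrac1\rho\big(B\cdot\nabla B - \nabla p - (\nabla B)\cdot B\big)$:
\begin{equation*}
\nabla D_t^4 u = \tfrac1\rho\Big(B\cdot\nabla(\nabla D_t^3 B) + \nabla(\nabla D_t^3 p)\Big) + \text{commutators}.
\end{equation*}
The two main terms are bounded by $C(M)\big(\|B\|_{2,3}+\|p\|_{2,3}\big)$ via H\"older. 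The commutators involve $[D_t^3,\nabla]$, $[\,\cdot\,,B\cdot\nabla]$, Leibniz terms from $1/\rho$ (using $D\rho=\rho'(p)Dp$), and $[D_t^3,\p_l]$, all of which distribute at most four derivatives onto $B$, $u$, $p$ while the remaining factors are a priori quantities; these are absorbed into $1+E_4^*$ using the already-established bounds \eqref{sk4} and \eqref{sk5}. The weighted terms $\swt\nabla D_t^4 p$, $\wt D_t^4 p$ etc. that appear in the commutators are controlled by $\sqrt{E_4}$ by \eqref{Bp14}. This gives the first inequality.

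For the second estimate, apply $\Delta D_t^3 = \nabla^2 D_t^3$ to $D_t u$ again through the momentum equation, now keeping one more spatial derivative on the Lorentz force and on $\nabla p$. The main terms become $\tfrac1\rho\big(B\cdot\nabla(\nabla^2 D_t^2 B) + \nabla(\nabla^2 D_t^2 p)\big)$ plus curl-type reshufflings; note $\nabla^2 D_t^2(B\cdot\nabla B)$ and $\nabla^2 D_t^2 \nabla p$ are exactly $\nabla\Delta D_t^2 B$ and $\nabla\Delta D_t^2 p$ modulo commutators (here I use that $\nabla^2 X$ and $\Delta X$ differ by curl terms controlled in $K_r$, or simply identify the relevant second-order combinations). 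The commutator terms again carry $\leq 4$ derivatives and are swallowed into $1+E_4^*$ via \eqref{sk4}–\eqref{sk5}. Hence
\begin{equation*}
\|\Delta D_t^3 u\|_{\lli}\lesssim_M \|\nabla\Delta D_t^2 B\|_{\lli}+\|\nabla\Delta D_t^2 p\|_{\lli}+1+E_4^*,
\end{equation*}
as claimed.

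The main obstacle is bookkeeping rather than analysis: one must check that every commutator produced by $[D_t^k,\nabla]$, $[D_t^k,B\cdot\nabla]$ and the Leibniz expansion of $1/\rho$ and of $\rho'(p)$, when hitting a five-derivative object, genuinely leaves at most four derivatives on any single factor, so that it falls under the already-controlled quantities in \eqref{sk4} and \eqref{sk5} (and, where a top-order $D_t^4 p$ slips through, that it appears with the weight $\wt$ or $\swt$ so that \eqref{Bp14} applies). This is the same mechanism used in Lemma \ref{usk} and in the derivation of \eqref{commutator1}–\eqref{commutator4}; the verification is routine but must be done termwise, paying particular attention to the borderline term $\nabla^3 D_t^2 p$ that, as noted in the text, re-enters through a commutator and is itself reduced back to $\wt D_t^5 p$ and $\nabla D_t^4 p$ by one further application of the elliptic estimate and the wave equation.
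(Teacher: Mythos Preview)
Your approach is correct and essentially identical to the paper's: write $D_tu=\rho^{-1}(B\cdot\nabla B-\nabla p-(\nabla B)\cdot B)$, apply the appropriate derivatives, isolate the two top-order contributions, and absorb all commutators into $1+E_4^*$ using the $\leq 4$-derivative bounds already established. One small slip: for the second estimate you wrote ``apply $\Delta D_t^3$ to $D_tu$'', but you mean $\Delta D_t^2$ (so that $\Delta D_t^3u=\Delta D_t^2(D_tu)$); also, your remark that ``$\nabla^2 X$ and $\Delta X$ differ by curl terms'' is not the right way to say it—$\Delta$ is simply the trace of $\nabla^2$, and what you actually need is that $\Delta\nabla=\nabla\Delta$ (in Eulerian coordinates, or up to lower-order curvature terms in Lagrangian coordinates)—but neither of these affects the validity of the argument.
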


\begin{proof}
 The proof is almost the same as Lemma \ref{usk}. From the first equation of \eqref{CMHD}, \eqref{commutator2} and \eqref{commutator4}, one has 
\[
\begin{aligned}
\nabla D_t^4 u&=\nabla D_t^3 \left( \frac{1}{\rho} ((B\cdot\nabla B)-\nabla p-(\nabla B)\cdot B)\right)
&=\frac{1}{\rho} \left(B\cdot \nabla D_t^4 B -\nabla^2 D_t^3 p\right)\\
&~~~~+(\text{terms of }\leq 4\text{ derivatives})\\
&\lesssim_M \|B\|_{2,3}+\|p\|_{2,3}+1+E_4^*.
\end{aligned}
\] Similar proof holds for $\Delta D_t^3 u$, so we omit the details.
\end{proof}

\item Control of $\|\hl_5\|_{\lli}$:

Recall we have
\[
\begin{aligned}
\lambda^{-1}\hl_5&= [D_t^4,\Delta]B \\
&=C\left((\Delta D_t^3 u)(\nabla B)+(\nabla v)(\nabla^2 D_t^3 B)  \right) \\
&~~~~+\sum_{l=0}^2 c_l (\Delta D_t^{l}u)(\nabla D_t^{3-l}B)+\sum_{l=0}^{2} d_l (\nabla^{3-l} u)(\nabla^2 D_t^{l} B) +L.O.T.,
\end{aligned} 
\]where L.O.T means the terms with $\leq 3$ derivatives in the commutator.

Therefore one has the bound
\begin{equation}\label{hl5}
\begin{aligned}
\|\hl_5\|_{\lli}&\lesssim_M\lambda\left(\|\Delta D_t^3u\|_{\lli}+\|\nabla^2 D_t^3 B\|_{\lli}+1+E_4^*\right) \\
&\lesssim_M\lambda\left(\|\nabla\Delta D_t^2 B\|_{\lli}+\|\nabla\Delta D_t^2 p\|_{\lli}+\|\nabla^2 D_t^3 B\|_{\lli}+1+E_4^*\right)\\
&\lesssim_{K,M,\volo}\lambda\left(\|\nabla\Delta D_t^2 B\|_{\lli}+\|\nabla\Delta D_t^2 p\|_{\lli}+\|\Delta D_t^3 B\|_{\lli}+1+E_4^*\right),
\end{aligned}
\end{equation}where in the second step we use Lemma \ref{u5}, and in the last step we use \eqref{B23}.

\item Control of $\|h_5\|_{\lli}$:

This step also needs Lemma \ref{u5} to convert $\nabla D_t^4 u$ to $\nabla^2 D_t^3 p$ and $\nabla^2 D_t^3 B$. We have
\begin{equation}\label{h5}
\begin{aligned}
~~~~h_5&=(B\cdot\nabla)D_t^4 u+ B\wk D_t^5 p \\
\Rightarrow \|h_5\|_{\lli}&\lesssim_{M,c_0}\|\nabla^2D_t^3 B\|_{\lli}+\|\nabla^2 D_t^3 p\|_{\lli}+\|\wt D_t^5 p\|_{\lli}+1+E_4^* \\
&\lesssim_{K,M,c_0,\volo} \|\Delta D_t^3 B\|_{\lli}+\|\Delta D_t^3 p\|_{\lli}+W_5+1+E_4^*.
\end{aligned}
\end{equation}
\end{itemize}

Combining \eqref{H52}, \eqref{hh5}, \eqref{hl5} and \eqref{h5}, we have the bound for $H_5$:
\begin{equation}\label{H53}
\begin{aligned}
H_5^2(T)-H_5^2(0)&\lesssim_{K,M,\volo,c_0}\int_0^T H_5^2(t)+W_5^2(t)+\PP(E_4^*(t))\dt \\
+\int_0^T &\|\nabla^2D_t^3 B(t)\|_{\lli}^2+\|\nabla^2 D_t^3 p(t)\|_{\lli}^2+\|\nabla\Delta D_t^2 B(t)\|_{\lli}^2+\|\nabla\Delta D_t^2 p(t)\|_{\lli}^2\dt
\end{aligned}
\end{equation}

From \eqref{heatlow5} and \eqref{wavelow5}, one can reduce the 5-th order terms in \eqref{H53} to $\leq 4$-th order terms. Therefore we are able to use $E_4^*$ and $W_5$ to bound $H_5$
\begin{equation}\label{H5}
\begin{aligned}
&~~~~H_5^2(T)-H_5^2(0) \\
&\lesssim_{K,M,\volo,c_0}\int_0^T H_5^2(t)+W_5^2(t)+\PP(E_4^*(t))\dt \\
&~~~~+\left(1+\frac{1}{\lambda}\right)\int_0^T \bigg(\|D_t^4 B(t)\|_{\lli}^2+\|h_4(t)\|_{\lli}^2+\|\hl_4(t)\|_{\lli}^2+\|\hh_4(t)\|_{\lli}^2 \\
&~~~~~~~~~~~~+\|\nabla D_t^3 B\|_{\lli}^2+\|\nabla h_3(t)\|_{\lli}^2+\|\nabla \hl_3(t)\|_{\lli}^2+\|\nabla \hh_3(t)\|_{\lli}^2\bigg)\dt \\
&~~~~+\int_0^T \bigg(\|\wt D_t^4 p(t)\|_{\lli}^2+\|w_4(t)\|_{\lli}^2+\|\wl_4(t)\|_{\lli}^2+\|\wh_4(t)\|_{\lli}^2 \\
&~~~~~~~~~~~~+\|\swt\nabla D_t^3 p(t)\|_{\lli}^2+\|\nabla w_3(t)\|_{\lli}^2+\|\nabla \wl_3(t)\|_{\lli}^2+\|\nabla \wh_3(t)\|_{\lli}^2\bigg)\dt \\
&\lesssim_{K,M,\volo,c_0}\int_0^T H_5^2(t)+W_5^2(t)+\left(1+\frac{1}{\lambda}\right)\PP(E_4^*(t))\dt.
\end{aligned}
\end{equation}

\subsubsection*{Wave equation}

Let $k=4$ in \eqref{wavek+1} and we can get the 5-th order wave equation:
\begin{equation}\label{wave5}
\wt D_t^6 p-\Delta D_t^4 p=w_5+\wh_5+\wl_5.
\end{equation}

Multiplying $\wt D_t^5 p$ on both sides of \eqref{wave5}, then integrating by parts to eliminate Laplacian term, one has
\[
\begin{aligned}
&~~~~\frac{1}{2}\frac{d}{dt} \left(\io \|\wt D_t^5p\|_{\lli}^2+\|\swt \nabla D_t^4 p\|^2 \right)=\frac{d}{dt} W_5^2(t)\\
&=\io \wt (w_5+\wh_5+\wl_5) D_t^5 p~J\dy+\io \wt \nabla D_t^4 p\cdot ([D_t, \nabla]D_t^4 p)~J\dy-\io \wt\nabla D_t^4 p\cdot\nabla J~ D_t^5 p\dy\\
&~~~~+\io\nabla(\wt)\cdot \nabla D_t^4 p D_t^5 p\dy.
\end{aligned}
\]

Note that $[D_t, \nabla]D_t^4 p=\nabla u\symdot \nabla D_t^4 p$ and $|\rho''(p)|\lesssim_{c_0}\wt^2$, so one has the following estimates for $W_5$ after integrating in time variable in $t\in[0,T]$.
\begin{equation}\label{W51}
\begin{aligned}
&~~~~W_5^2(T)-W_5^2(0) \\
&\lesssim_{M, c_0} \int_0^T (\|w_5\|_{\lli}+\|\wh_5\|_{\lli}+\|\wl_5\|_{\lli})\|\wt D_t^5 p\|_{\lli} \dt+\int_0^T \|\swt \nabla D_t^4p(t)\|_{\lli}^2\dt \\
&~~~~+\int_0^T \|\wt D_t^5 p(t)\|_{\lli} \|\swt \nabla D_t^4p(t)\|_{\lli} \dt\\
&\lesssim_{M, c_0}\int_0^T(\|w_5\|_{\lli}+\|\wh_5\|_{\lli}+\|\wl_5\|_{\lli})\|\wt D_t^5 p\|_{\lli} \dt+\int_0^T W_5^2(t)\dt.
\end{aligned}
\end{equation}

Now we are going to bound $\wl_5,~\wh_5$ and $w_5$.
\begin{itemize}
\item Control of $\wh_5$: 

From \eqref{wk+1}, we know
\[
\begin{aligned}
\wh_5&=\sum_{i_1+\cdots+i_m=6,~1\leq i_k\leq 5}c_{i_1,\cdots,i_m}\rho^{(m)}(p)(D_t^{i_1}p)\cdots(D_t^{i_m}p)\\
&=\rho''(p) D_t^5 D_t p+\sum_{i_1+\cdots+i_m=6,~1\leq i_k\leq 4}c_{i_1,\cdots,i_m}\rho^{(m)}(p)(D_t^{i_1}p)\cdots(D_t^{i_m}p).
\end{aligned}
\]

Since $|\rho^{(m)}(p)|\lesssim_{c_0}\wt^m$, one has the energy bound for $\wh_5$
\begin{equation}\label{wh5}
\|\wh_5\|_{\lli}\lesssim_{M,c_0} W_5+ 1+E_4^*.
\end{equation}

\item Control of $\wl_5$:

From \eqref{wk+1} we know
\[
\begin{aligned}
\lambda\wl_5&=B\cdot D_t^5B-B\cdot h_5-B\cdot \hh_5\\
&~~~~+\sum_{l=1}^4C_4^l D_t^l B\cdot \bigg(D_t^{5-l}B-D_t^{4-l}(B\cdot\nabla)u-  D_t^{4-l}\left( B\wk D_t p\right)\bigg).
\end{aligned}
\]

We notice that the second line only contains $\leq 4$ derivatives of $u, B, p$, and thus controlled by $E_4^*$. Combining this with \eqref{hh5} and \eqref{h5}, one has 
\[
\begin{aligned}
&~~~~\int_0^T\io \wt \wl_5 (t) D_t^5 p(t) ~J\dy\dt \\
&\lesssim_{K, M, c_0, \volo}\frac{1}{\lambda}\int_0^T \|D_t^5B(t)\|_{\lli}\|\wt D_t^5 p\|_{\lli}\dt+\int_0^T W_5(t) (1+E_4^*(t))\dt \\
&~~~~~~~~+\frac{1}{\lambda}\|\wt D_t^5 p(t)\|_{\lli}\left(\|\nabla^2 D_t^3 B(t)\|_{\lli}+\|\nabla^2 D_t^3 p(t)\|_{\lli}+\|\wt D_t^5 p(t)\|_{\lli}\right)\dt\\
\end{aligned}
\]

By elliptic estimate \eqref{B23}, we know $\|\nabla^2 D_t^3 B\|_{\lli}$ can be bounded by $\|\Delta D_t^3 B\|_{\lli}$ which can again be reduced to 4-th order terms by using \eqref{heatlow5}. Hence the above estimates can be rewritten to be
\begin{equation}
\begin{aligned}\label{wl51}
&~~~~\int_0^T\io \wt \wl_5 (t) D_t^5 p(t) ~J\dy\dt \\
&\lesssim_{K,M,\volo,c_0}\frac{1}{\lambda}\delta H_5^2(T)+\left(1+\frac{1}{4\delta\lambda}+\frac{1}{\lambda}\right) \int_0^T W_5^2(t)\dt+\left(1+\frac{1}{\lambda}\right)\int_0^TE_4^*(t)\dt \\
&~~~~~~~~+\frac{1}{\lambda}\int_0^T \|\wt D_t^5 p(t)\|_{\lli}\|\nabla^2 D_t^3 p(t)\|_{\lli}\dt
\end{aligned}
\end{equation} for any $\delta>0$.

Again, by the elliptic estimate \eqref{p23} and \eqref{wavelow5}, one has
\[
\|\nabla^2 D_t^3 p\|_{\lli}\lesssim\|\Delta D_t^3 p\|_{\lli}\lesssim\|\wt D_t^5 p\|_{\lli}+\underbrace{\|\wl_4\|_{\lli}+\|\wh_4\|_{\lli}+\|w_4\|_{\lli}}_{\leq\text{ 4 derivatives}}
\]  and thus 
\[
\|\nabla^2 D_t^3 p\|_{\lli}\lesssim_{K,M,c_0,\volo}\|\wt D_t^5 p\|_{\lli}+\left(1+\frac{1}{\lambda}\right)(1+E_4^*).
\]

Combining this with \eqref{wl51}, one can bound $\wl_5$ as follows
\begin{equation}
\begin{aligned}\label{wl5}
&~~~~\int_0^T\io \wt \wl_5 (t) D_t^5 p(t) ~J\dy\dt \\
&\lesssim_{K,M,\volo,c_0}\frac{1}{\lambda}\delta H_5^2(T)+\left(1+\frac{1}{4\delta\lambda}+\frac{1}{\lambda}\right) \int_0^T W_5^2(t)\dt+\left(1+\frac{1}{\lambda}\right)\int_0^T E_4^*(t)\dt.
\end{aligned}
\end{equation} for any $\delta>0$.

\item Control of $w_5$:

Recall from \eqref{w} and \eqref{wk+1} that we have
\begin{equation}\label{mhdnmsl}
\begin{aligned}
w_5&=D_t^4 w+[D_t^4,\Delta ]p\\
&=D_t^4\bigg(\left(\wk-\rho''(p)\right)(D_tp)^2+\wk\nabla_i p(B\cdot\nabla B_i)-\nabla_i P)+\rho\nabla_i u^k\nabla_k u^i-\nabla^i B_k\nabla_k B^i +|\nabla B|^2\bigg)\\
&~~~~+[D_t^4,\Delta ]p\\
&=2\left(\wk-\rho''(p)\right)D_t^5p D_t p+2\rho \nabla D_t^4 u\symdot\nabla u+(\nabla D_t^4 B)(\nabla B)+(\Delta D_t^3 u)(\nabla p)+(\nabla u)(\nabla^2 D_t^3 p)\\
&~~~~-\wk \bigg(\nabla D_t^4 p\cdot\nabla P+\nabla p\cdot\nabla D_t^4 P-(\nabla D_t^4 P)\cdot (B\cdot\nabla)B-\nabla P\cdot(B\cdot\nabla)D_t^4 B\bigg)\\
&~~~~+X_5,
\end{aligned}
\end{equation}

where $X_5$ consists of:
\begin{itemize}
\item commutators produced when taking $D_t^4$ on $w$;

\item all the terms in $[D_t^4,\Delta ]p$ except $(\Delta D_t^3 u)(\nabla p)+(\nabla u)(\nabla^2 D_t^3 p)$, i.e., all the terms with $\leq 4$ derivatives in $[D_t^4,\Delta]p$.
\end{itemize}

From the commutator \eqref{commutator4}, the precise formula of $X_5$ is:

\begin{equation}\label{X50}
\begin{aligned}
X_5&=\sum_{l=0}^2 c_l (\Delta D_t^{l}u)(\nabla D_t^{3-l}p)+\sum_{l=0}^{2} d_l (\nabla^{3-l} u)(\nabla^2 D_t^{l} p) \\
&~~~~+\sum_{l_1+\cdots+l_n=5-n,n\geq 3}c_{l_1\cdots l_n}(\nabla D_t^{l_3} u)\cdots(\nabla D_t^{l_n}u)(\Delta D_t^{l_1}u)(\nabla D_t^{l_2}p) \\
&~~~~+\sum_{l_1+\cdots+l_n=5-n,n\geq 3}d_{l_1\cdots l_n}(\nabla D_t^{l_3} u)\cdots(\nabla D_t^{l_n}u)(\nabla^2 D_t^{l_1}u)(\nabla D_t^{l_2}p) \\
&~~~~+\sum_{l_1+\cdots+l_n=5-n,n\geq 3}e_{l_1\cdots l_n}(\nabla D_t^{l_3} u)\cdots(\nabla D_t^{l_n}u)(\nabla D_t^{l_1}u)(\nabla^2 D_t^{l_2}p) \\
&~~~~+\left[D_t^4,\wk-\rho''(p)\right](D_t p)^2-\left[D_t^4,\wk\right]((\nabla p)\cdot(\nabla P-(B\cdot\nabla)B))\\
&~~~~-\wk ([D_t^4,\nabla]p)\cdot(\nabla P-(B\cdot\nabla)B)-\wk\sum_{l=1}^{3} C_4^l (D_t^l\nabla_i p)(D_t^{4-l}(B\cdot\nabla)B) \\
&~~~~+[D_t^4,\rho](\nabla u\symdot\nabla u)+2\rho([D_t^4,\nabla]u)\symdot\nabla u+\lambda^{-1}[D_t^4,\nabla]B\cdot(\underbrace{(B\cdot\nabla)u+B\wk D_t p}_{\Delta B})\\
&~~~~+\sum_{l=1}^3 C_4^l (D_t^l\nabla B)\symdot(D_t^{4-l}\nabla B).
\end{aligned}
\end{equation}

One has 
\begin{equation}\label{X5}
\|X_5\|_{\lli}\lesssim_{K,M,c_0,\volo}\left(1+\frac{1}{\lambda}\right)(1+E_4^*),
\end{equation}because all these terms are of $\leq 4$ derivatives and thus controlled by $E_4^*$.

Combining \eqref{mhdnmsl}, \eqref{X5}, together with Lemma \ref{u5}(control of $u$), \eqref{B321}, \eqref{p321}, \eqref{B23} and \eqref{p23}(elliptic estimates for $B$ and $p$), one can finally get the estimates on $w_5$:
\begin{equation}\label{w5}
\begin{aligned}
&~~~~\int_0^T\io\wt D_t^5 p \cdot w_5~J\dy\dt \\
&\lesssim_{K,M,c_0,\volo,1/\lambda} \int_0^T W_5^2(t)\dt+\int_0^T \PP(E_4^*(t))\dt \\
&~~~~~~~~+\int_0^T\|\wt D_t^5 p(t)\|_{\lli}\bigg(\|\nabla D_t^4 B(t)\|_{\lli}+\|\nabla^2 D_t^3 B(t)\|_{\lli}+\|\nabla^2 D_t^3 p(t)\|_{\lli} \\
&~~~~~~~~~~~~~~~~~~~~~~~~~~~~~~~~~~~~~~~~~~~~+\|\nabla^3 D_t^2 B(t)\|_{\lli}+\|\nabla^3 D_t^2 p(t)\|_{\lli}\bigg)\dt\\
&\lesssim_{K,M,c_0,\volo,1/\lambda} \int_0^T W_5^2(t)+\PP(E_4^*(t))\dt+ H_5^2(t)\dt.
\end{aligned}
\end{equation}

Summing \eqref{wh5}, \eqref{wl5} and \eqref{w5}, one gets the estimates on $W_5$:
\begin{equation}\label{W5}
W_5^2(T)-W_5^2(0)\lesssim_{K,M,c_0,\volo,1/\lambda}\delta H_5^2(T)+\int_0^T W_5^2(t)+\PP(E_4^*(t))\dt+ H_5^2(t)\dt,
\end{equation} for any $\delta>0$.
\end{itemize}

Summing up \eqref{H5} and \label{W5}, then picking $\delta>0$ suitably small such that $\delta H_5^2(T)$ can be absorbed by LHS of \eqref{H5}, we finally get the common control of $W_5^2+H_5^2$ by 
\begin{equation}\label{waveheat5}
W_5^2(T)+H_5^2(T)-W_5^2(0)-H_5^2(0)\lesssim_{K,M,c_0,\volo,1/\epsilon_0,1/\lambda}\int_0^T W_5^2(t)+H_5^2(t)+\PP(E_4^*(t))\dt.
\end{equation} Therefore we can bound $W_5$ and $H_5$ by $E_4^*$ and initial data in $t\in[0,T]$ for sufficiently small $T>0$.

\begin{flushright}
$\square$
\end{flushright}

\section{Summary of the estimates and the incompressible limit}

Summing up \eqref{tg}, \eqref{tglow} and \eqref{waveheat5}, we get 
\begin{equation}\label{sum1}
E_4^*(T)-E_4^*(0)\lesssim_{K,M,c_0,\volo,1/\epsilon_0,1/\lambda}\int_0^T \PP(E_4^*(t))\dt
\end{equation}under the a priori assumptions
\[
\begin{aligned}
|\theta|+\frac{1}{\iota_0}&\leq K~~on~~\p \DD_t,\\
-\nabla_N P\geq \epsilon_0&>0~~~on~~\p\DD_t,\\
1\leq|\rho|&\leq M~~in~~\DD_t,\\
\sum_{s+k\leq 2}|\p^s D_t^k p|+|\p^s D_t^k B|+|\p^s D_t^k u|&\leq M~~in~~\DD_t.\\
\end{aligned}
\] Hence, it suffices to recover the bounds of these a priori quantites so that our a priori estimates can be completed.

\subsection{Justification of the a priori assumptions}

The following lemma gives control of these a priori quantities. 
\begin{lem}\label{justify0}
Define $\EE (t):= |(\nabla_N P(t,\cdot))^{-1}|_{\linf}$. Then there exist continuous functions $G$ such that
\begin{equation}\label{justify}
\begin{aligned}
&~~~~\sum_{1\leq s+k\leq 2}\|\nabla^s D_t^k p\|_{L^{\infty}(\Omega)}+\|\nabla^s D_t^k B\|_{L^{\infty}(\Omega)}+\|\nabla^s D_t^k u\|_{L^{\infty}(\Omega)}+|\theta|_{\linf}+|\EE'(t)|\\
&\leq G(K,c_0,E_0,\cdots, E_4,\volo)
\end{aligned}
\end{equation}
\begin{proof}
By Sobolev embedding, one has 
\[
\begin{aligned}
&~~~~\sum_{1\leq s+k\leq 2}\|\nabla^s D_t^k p\|_{L^{\infty}(\Omega)}+\|\nabla^s D_t^k B\|_{L^{\infty}(\Omega)}+\|\nabla^s D_t^k u\|_{L^{\infty}(\Omega)} \\
&\lesssim_K \sum_{s+k\leq 2}\sum_{j=0}^2\|\nabla^{j+s}D_t^k u\|_{\lli}+\|\nabla^{j+s}D_t^k B\|_{\lli}+\|\nabla^{j+s}D_t^k p\|_{\lli}.
\end{aligned}
\]As a result of our previous estimates, the bound for $u,~B,~p$ in \eqref{justify} holds. 

By the definition of $\EE$, one has $|\nabla^2 P|\geq |\Pi\nabla^2 P|=|\nabla_N P\|\theta|\geq\EE^{-1}|\theta|$. Finally,
\[
\frac{d}{dt}|(-\nabla_N P(t,\cdot))^{-1}|_{\linf}\lesssim|(-\nabla_N P(t,\cdot))^{-1}|_{\linf}^2|\nabla_N D_t P(t,\cdot)|_{\linf}
\]implies the bound of $\EE'(t)$.
\end{proof}
\end{lem}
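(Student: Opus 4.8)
The plan is to read this lemma as the \emph{recovery} step of the continuity argument: granting the a priori bounds \eqref{asspriori}, we have already shown in Sections \ref{section5}--\ref{section7} that every $L^2$-norm $\|\nabla^sD_t^kf\|_{\lli}$ of $f\in\{u,B,p\}$ with $s+k\le 4$ (and its boundary trace) is dominated by a polynomial in $E_4^*$ and in $K,c_0,\volo,1/\epsilon_0,1/\lambda$; what remains is to convert those $L^2$-bounds into the $L^{\infty}$- and geometric bounds on the left of \eqref{justify}, so that reinserting them into \eqref{sum1} closes the bootstrap.

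For the interior pointwise norms I would use the Sobolev embedding $H^2(\Omega)\hookrightarrow L^{\infty}(\Omega)$ (here $n=3$), so that $\|\nabla^sD_t^kf\|_{L^{\infty}(\Omega)}\lesssim\sum_{j\le2}\|\nabla^{j+s}D_t^kf\|_{\lli}$ for $1\le s+k\le 2$. Since $j+s+k\le 4$ and, crucially, $k\le 2$, the weighted top-order terms $\swt D_t^4p$, $\swt\nabla D_t^3p$, $\swt D_t^4u$ never occur. Each $\|B\|_{s,k},\|p\|_{s,k}$ with $s+k\le4$ is then controlled by the div--curl estimate \eqref{ub40} together with the elliptic estimates applied in Section \ref{section5} and the wave/heat estimates of Sections \ref{section6}--\ref{section7}; each $\|u\|_{s,k}$ is reduced to those via the momentum equation (Lemma \ref{usk} for $k\ge1$, and for the pure time derivatives $D_t^ku$ by differentiating $\rho D_tu=B\cdot\p B-\p P$ once more); and for the $p$-terms one uses the continuity equation in the form $\dive u=-\wk D_tp$ together with the weight bounds \eqref{weight}, so that no lower bound on $\wt$ is ever needed. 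All of these were shown in \eqref{sk3}--\eqref{sk5} to be bounded by $\PP(E_4^*)$, whence the interior part of \eqref{justify}.

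For $|\theta|_{\linf}$ I would invoke the pointwise identity \eqref{21}: since $P=0$ on $\p\Omega$ one has $\cnab^2P=0$ there, hence $\Pi\nabla^2P=\theta\,\nabla_N P$ on $\p\Omega$, so that $|\theta|\le|\nabla_N P|^{-1}|\nabla^2P|\le\EE\,\|\nabla^2P\|_{L^{\infty}(\Omega)}$. The first factor is $\le1/\epsilon_0$ by the Rayleigh--Taylor condition, and $\|\nabla^2P\|_{L^{\infty}(\Omega)}\lesssim\|\nabla^2p\|_{L^{\infty}}+\|B\|_{L^{\infty}}\|\nabla^2B\|_{L^{\infty}}+\|\nabla B\|_{L^{\infty}}^2$ was bounded in the previous step (alternatively one could combine Proposition \ref{theta estimate} for $r=2,3,4$ with $H^2(\p\Omega)\hookrightarrow L^{\infty}(\p\Omega)$ and a trace inequality, which is more wasteful but also works). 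Finally, for $|\EE'(t)|$ I would work in Lagrangian coordinates, where $\p_t|_y=D_t$ and $\p\Omega$ is fixed, so that $|\EE'(t)|\le\|D_t((-\nabla_N P)^{-1})\|_{\linf}=\|(-\nabla_N P)^{-2}\,D_t(\nabla_N P)\|_{\linf}$, the material derivative of a supremum being dominated by the supremum of the material derivative. Expanding $D_t(\nabla_N P)=(D_tN^i)\p_iP+N^i\p_iD_tP+N^i[D_t,\p_i]P$ and using Lemma \ref{Lemma 1.1} for $D_tN$ together with $[D_t,\p]=-(\p u)\symdot\p$ gives $|D_t(\nabla_N P)|\lesssim\|\nabla u\|_{L^{\infty}}\|\nabla P\|_{L^{\infty}}+\|\nabla D_tP\|_{L^{\infty}}$, and all three factors are among the order-$\le1$ interior quantities already controlled; hence $|\EE'(t)|\lesssim\epsilon_0^{-2}\,G_1$ with $G_1$ a continuous function of $E_0,\dots,E_4$ and the listed constants.

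The only genuinely delicate point is the bookkeeping in the second paragraph: one must verify that every $L^2$-norm fed into the Sobolev embeddings really lies in the range $s+k\le4,\ k\le2$, so that the weighted structure of $E_4^*$ is respected and no unweighted $D_t^4p$ or $\nabla D_t^3p$ is needed, and that reducing the $u$-norms through the momentum equation does not reintroduce a full spatial derivative on the boundary. Both are immediate from the constraint $s+k\le2$ on the quantities being estimated, so beyond this uniformity-in-the-sound-speed check nothing more than routine Sobolev/trace inequalities and the chain rule is required.
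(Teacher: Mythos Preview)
Your proposal is correct and follows essentially the same route as the paper: Sobolev embedding $H^2\hookrightarrow L^\infty$ for the interior quantities, the identity $\Pi\nabla^2P=\theta\,\nabla_N P$ on the boundary for $|\theta|_{\linf}$, and the chain rule $\frac{d}{dt}(-\nabla_N P)^{-1}=(-\nabla_N P)^{-2}D_t(\nabla_N P)$ for $|\EE'(t)|$. Your additional bookkeeping about avoiding the weighted top-order terms and your explicit expansion of $D_t(\nabla_N P)$ via Lemma \ref{Lemma 1.1} and $[D_t,\p]=-(\p u)\symdot\p$ are more detailed than the paper's sketch but amount to the same argument.
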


\subsection{Energy estimates}\label{section8}

Now we can close all the a priori estimates with the help of Lemma \ref{justify0}.

\begin{prop}\label{total}
There exists a positive continuous function $\TT$, such that: If $0<T\leq \TT(c_0,K,\EE(0),E_4^*(0),\volo)$, then any solution of \eqref{CMHD} in $t\in [0,T]$ satisfies the following bounds for some polynomial $\PP$ with positive coefficients:
\begin{equation}\label{complete}
\begin{aligned}
E_4^*(t)&\lesssim_{1/\lambda}  2 E_4^*(0), \\
\EE(t)&\lesssim_{1/\lambda} 2 \EE(0), \\
g_{ab}(t,y)Z^aZ^b&\sim g_{ab}(0,y)Z^aZ^b,
\end{aligned}
\end{equation} and there exists some fixed $\eta>0$ such that the following bounds hold
\begin{equation}\label{justify2}
\begin{aligned}
|N(x(t,\bar{y}))-N(x(0,\bar{y}))|&\lesssim\eta,~~\forall \bar{y}\in\p\Omega,\\
|x(t,y)-x(0,y)|&\lesssim\eta,~~\forall y\in\Omega,\\
\left|\frac{\p x (t,\bar{y})}{\p y}-\frac{\p x(0,\bar{y})}{\p y}\right|&\lesssim\eta,~~\forall\bar{y}\in\p\Omega.
\end{aligned}
\end{equation}
\end{prop}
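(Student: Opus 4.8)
The plan is to run a continuity/bootstrap argument based on the a priori energy estimate \eqref{sum1}. The idea is that \eqref{sum1} by itself only gives control of $E_4^*$ \emph{under} the a priori assumptions \eqref{asspriori}, so one must close the loop by showing that on a short enough time interval those assumptions propagate from their initial values with a fixed margin (say, a factor of $2$). First I would fix the constants at time $t=0$: set $K_0 := |\theta(0)| + 1/\iota_0(0)$, $\EE(0) = |(\nabla_N P(0,\cdot))^{-1}|_{\linf}$, $M_0$ the initial bound on the low-order norms, and $E_4^*(0)$. Then I would define $T^*$ to be the supremum of times $T$ such that a solution exists on $[0,T]$ and the strengthened assumptions hold with doubled constants: $|\theta| + 1/\iota_0 \le 2K_0 =: K$, $-\nabla_N P \ge \epsilon_0/2$, $1 \le |\rho| \le 2M_0$, and $\sum_{s+k\le 2}(|\p^sD_t^k p| + |\p^sD_t^k B| + |\p^sD_t^k u|) \le 2M_0 =: M$ in $\DD_t$, plus $E_4^*(t) \le 2E_4^*(0)$. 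On $[0,T^*)$ all hypotheses of \eqref{sum1} are in force, so $E_4^*(T) - E_4^*(0) \lesssim_{K,M,c_0,\volo,1/\epsilon_0,1/\lambda} \int_0^T \PP(E_4^*(t))\dt$; since $\PP$ has positive coefficients, a standard ODE-comparison (Gronwall-type) argument shows that if $T \le \TT$ for a suitable continuous $\TT = \TT(c_0,K,\EE(0),E_4^*(0),\volo)$, then in fact $E_4^*(t) \le \tfrac{3}{2}E_4^*(0)$ on $[0,T]$, which is a strict improvement.

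The second half is upgrading the $E_4^*$ bound back to the geometric and $L^\infty$ assumptions, which is exactly where Lemma \ref{justify0} is used. From $E_4^*(t) \lesssim 2E_4^*(0)$ and Sobolev embedding, the low-order pointwise norms $\sum_{s+k\le 2}\|\nabla^sD_t^k(u,B,p)\|_{L^\infty}$ are bounded by $G(K,c_0,E_0,\ldots,E_4,\volo)$, hence $\le \tfrac{3}{2}M_0$ for $T$ small because these quantities equal their initial values at $t=0$ and have bounded time derivative (again controlled by $E_4^*$ via $D_t$ of the governing equations). For the Rayleigh–Taylor constant, $\EE'(t) \lesssim \EE(t)^2|\nabla_N D_t P|_{\linf}$ and $|\nabla_N D_t P|_{\linf} \lesssim \sqrt{E_4^*}$ by trace and Sobolev, so Gronwall gives $\EE(t) \le \tfrac{3}{2}\EE(0)$, i.e. $-\nabla_N P \ge \tfrac{2}{3}\epsilon_0 > \epsilon_0/2$, for $T$ small. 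For $\theta$: from $|\nabla^2 P| \ge \EE^{-1}|\theta|$ (as in the proof of Lemma \ref{justify0}) and the elliptic/tensor estimate $|\cnab^2\theta|_{\llb} \lesssim_{K,1/\epsilon_0} 1 + \sqrt{E_4^*}$, one controls $|\theta|_{\linf}$ by Sobolev embedding on $\p\Omega$, and combined with the flow-map estimate this keeps $|\theta| + 1/\iota_0$ below $2K_0$.

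The flow-map closeness estimates \eqref{justify2} come from integrating $D_t x = u$, $D_t(\p x/\p y) = (\p u/\p x)(\p x/\p y)$, and $D_t N = -\tfrac12 N (D_tg^{cd})N_cN_d$ (Lemma \ref{Lemma 1.1}): each right-hand side is bounded in $L^\infty$ by the already-controlled low-order norms, so $|x(t,y) - x(0,y)| \lesssim t\,\|u\|_{L^\infty} \lesssim \eta$ and similarly for the other two, provided $T$ is taken small relative to $\eta$; the metric equivalence $g(t)Z\!\cdot\!Z \sim g(0)Z\!\cdot\!Z$ follows from $|D_tg_{ab}| = |\nabla_a u_b + \nabla_b u_a| \lesssim M$ and Gronwall on $g_{ab}(t,y)$. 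One then chooses $\TT$ to be the minimum of the several smallness thresholds extracted above (each a continuous function of $c_0,K,\EE(0),E_4^*(0),\volo$), so that on $[0,T]$ with $T \le \TT$ every doubled assumption is improved to a factor $\tfrac32$; by continuity in time this forces $T^* \ge \TT$, proving \eqref{complete} and \eqref{justify2}. The main obstacle is bookkeeping: verifying that the constants implicit in \eqref{sum1} and in Lemma \ref{justify0}, which are allowed to depend on $K,M,1/\epsilon_0$ etc., genuinely do not degrade when those parameters are only at their doubled initial values — i.e. that the argument is not circular — and that the time of existence from \cite{ZhangCRMHD2} is itself $\ge \TT$; this is the usual subtlety in closing a local-in-time bootstrap and is handled by fixing the doubled constants \emph{first} and only then choosing $\TT$ small.
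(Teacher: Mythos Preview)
Your proposal is correct and follows essentially the same bootstrap/continuity argument as the paper: combine \eqref{sum1} with Lemma \ref{justify0}, apply a nonlinear Gronwall inequality to bound $E_4^*$, then recover $\EE$, the metric equivalence, and the flow-map bounds \eqref{justify2} from $D_tg\sim\nabla u$, $D_tN_a$, and $D_tx=u$ exactly as you describe. The one point the paper makes explicit that you gloss over is the control of the injective radius $\iota_0$ (needed so the Sobolev constants do not degenerate): the paper invokes a lemma of Christodoulou--Lindblad (Lemma \ref{iota}) showing $\iota_0$ and the quantity $l_1(\eta)$ are comparable when $|\theta|\le K_0$, together with a short corollary propagating the normal-variation condition $|N(x(t,y_1))-N(x(t,y_2))|\le\eta$ from $t=0$; this is precisely the mechanism that keeps $1/\iota_0$ below $2K_0$ and closes the geometric side of the bootstrap.
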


\begin{proof}
From \eqref{sum1} and Lemma \ref{justify0}, one has 
\[
E_4^2(t)-E_4^2(0)\lesssim_{c_0,K,\EE,E_0,\cdots,E_4,\volo,1/\lambda}\int_0^t \PP(E_4^*(s))~ds,
\]where $\PP$ is a polynomial with positive coefficients. The the Gronwall's inequality in \cite{tao2006nonlinear} yields the bound of $E_4^*$ provided that $\TT(c_0,K,\EE(0),E_4^*(0),\volo)>0$ is sufficently small. Therefore the estimates for $\EE(t)$ is a straightforward result from \eqref{justify} and the bounds for $E_4^*$. 

In addition, we get from $E_4^*(t)\lesssim_{1/\lambda}  P(E_4^*(0))$ that all the a priori quantities can be controlled by their intial data for $t\in [0,T]$:
\[
\begin{aligned}
&~~~~\sum_{1\leq s+k\leq 2}\|\nabla^s D_t^k p(t,\cdot)\|_{L^{\infty}(\Omega)}+\|\nabla^s D_t^k B(t,\cdot)\|_{L^{\infty}(\Omega)}+\|\nabla^s D_t^k u(t,\cdot)\|_{L^{\infty}(\Omega)}+|\theta(t,\cdot)|_{\linf}\\
&\lesssim_{1/\lambda}\PP\left(\sum_{1\leq s+k\leq 2}\|\nabla^s D_t^k p(0,\cdot)\|_{L^{\infty}(\Omega)}+\|\nabla^s D_t^k B(0,\cdot)\|_{L^{\infty}(\Omega)}+\|\nabla^s D_t^k u(0,\cdot)\|_{L^{\infty}(\Omega)}+|\theta(0,\cdot)|_{\linf}\right).
\end{aligned}
\]

Besides, one can also bound the $L^{\infty}(\Omega)$ norm of $u, B,\rho$ by their initial data. This follows directly from \eqref{CMHD}. $g_{ab}(t,y)Z^aZ^b\sim g_{ab}(0,y)Z^aZ^b$ holds because $D_t g\sim\nabla u$. Furthermore, this inequality together with $$D_t N_a=-\frac{1}{2} N_a (D_t g^{cd}N_c N_d)$$ implies $$|N(x(t,\bar{y}))-N(x(0,\bar{y}))|\lesssim\eta,~~\forall \bar{y}\in\p\Omega.$$ Finally, the definition of Lagrangian coordiantes $D_t x(t,y)=u(t,x(t,y))$ yields that
\[
\begin{aligned}
|x(t,y)-x(0,y)|&\lesssim\eta,~~\forall y\in\Omega,\\
\left|\frac{\p x (t,\bar{y})}{\p y}-\frac{\p x(0,\bar{y})}{\p y}\right|&\lesssim\eta,~~\forall\bar{y}\in\p\Omega.
\end{aligned}
\]

Before we end the proof of Proposition \ref{total}, we have to make sure that the constants of Sobolev embedding inequalities can be controlled. In fact, these constants depend on $K_0:=\iota_0^{-1}$ which can be chosen to be only dependent on the inital conditions. This result (see the following lemma) has been proved in Lemma 3.6 in Christodoulou-Lindblad \cite{christodoulou2000motion}:
\begin{lem}\label{iota}
Let $0\leq \eta \leq 2$ be a fixed number, and define $l_1=l_1(\eta)$ to be the largest number such that $$|N(\bar{x_1})-N(\bar{x_2})|\leq\eta$$ whenever $|\bar{x_1}-\bar{x_2}|\leq l_1$ and $\bar{x_1},\bar{x_2}\in\p\DD_t$. Suppose also $|\theta|\leq K_0$. Then the injective radius satisfies 
\[
\iota_0\geq\min\{l+1/2,1/K_0\},~~l_1\geq \min\{2\iota_0, \eta/K_0\}.
\]
\end{lem}
\end{proof}

Actually, Lemma \ref{iota} shows that $\iota_0$ and $l_1$ are comparable if the free surface is regular. 

\begin{cor}
Fix $\eta>0$ sufficently small. Let $\TT$ be in Proposition \ref{total}. Choose $l_1>0$ such that $$|N(x(0,y_1))-N(x(0,y_2))|\leq \eta/2$$ holds whenever $|x(0,y_1)-x(0,y_2)|\leq 2l_1$. Then for $t\leq \TT$, one has $$|N(x(t,y_1))-N(x(t,y_2))|\leq \eta$$ whenever $|x(t,y_1)-x(t,y_2)|\leq l_1$.
\end{cor}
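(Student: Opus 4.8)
The plan is to deduce the corollary from the time-continuity estimates \eqref{justify2} of Proposition \ref{total} together with the defining property of $l_1$, via two applications of the triangle inequality. First I would observe that, by the proof of Proposition \ref{total}, the left-hand sides of \eqref{justify2} vanish at $t=0$ and have $t$-derivative controlled uniformly on $[0,\TT]$: indeed $D_t x(t,y)=u(t,x(t,y))$ and $D_t N_a=-\tfrac12 N_a(D_t g^{cd}N_cN_d)$ with $D_t g\sim\nabla u$, and $\|u\|_{L^\infty(\Omega)}+\|\nabla u\|_{L^\infty(\Omega)}$ is bounded for $t\le\TT$ by a constant depending only on $c_0,K,\EE(0),E_4^*(0),\volo$ (Lemma \ref{justify0} and the bound $E_4^*(t)\lesssim_{1/\lambda}2E_4^*(0)$). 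Hence both $\sup_{y\in\Omega}|x(t,y)-x(0,y)|$ and $\sup_{\bar y\in\partial\Omega}|N(x(t,\bar y))-N(x(0,\bar y))|$ are $\lesssim t$, so after shrinking $\TT$ (now also in terms of $\eta$ and $l_1$) we may assume that for all $t\le\TT$,
\[
\sup_{y\in\Omega}|x(t,y)-x(0,y)|\le \tfrac{l_1}{2},\qquad \sup_{\bar y\in\partial\Omega}|N(x(t,\bar y))-N(x(0,\bar y))|\le \tfrac{\eta}{4}.
\]

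Next, assume $|x(t,y_1)-x(t,y_2)|\le l_1$. Then by the first bound and the triangle inequality,
\[
|x(0,y_1)-x(0,y_2)|\le |x(0,y_1)-x(t,y_1)|+|x(t,y_1)-x(t,y_2)|+|x(t,y_2)-x(0,y_2)|\le \tfrac{l_1}{2}+l_1+\tfrac{l_1}{2}=2l_1,
\]
so by the choice of $l_1$ we get $|N(x(0,y_1))-N(x(0,y_2))|\le \eta/2$. A second triangle inequality together with the second bound then yields
\[
|N(x(t,y_1))-N(x(t,y_2))|\le \tfrac{\eta}{4}+\tfrac{\eta}{2}+\tfrac{\eta}{4}=\eta,
\]
which is the assertion. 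Feeding this into Lemma \ref{iota} with $K_0=K$ gives $\iota_0(t)\gtrsim\min\{l_1,1/K\}$ uniformly for $t\le\TT$, which recovers the lower bound on the injectivity radius built into the a priori assumption $|\theta|+1/\iota_0\le K$ and thereby completes the justification of \eqref{asspriori} and the proof of Theorem \ref{priori00}.

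The argument involves no serious analytic obstacle; the only point requiring care is the order of the quantifiers. The number $l_1$ is determined by $\eta$ and by the initial configuration of $\p\DD_0$, and the shrinking of $\TT$ that makes $|x(t,y)-x(0,y)|\le l_1/2$ must therefore be performed \emph{after} $l_1$ has been fixed; since $\TT$ in Proposition \ref{total} may be taken arbitrarily small, this introduces no circularity. One should also note that the time-continuity of the Eulerian normal $N(x(t,\cdot))$ is exactly what \eqref{justify2} supplies, so that this last step genuinely closes the bootstrap rather than merely reshuffling the a priori quantities.
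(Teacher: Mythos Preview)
Your argument is correct and is precisely the standard one: two triangle inequalities combined with the time-continuity bounds \eqref{justify2}, after which Lemma \ref{iota} propagates the lower bound on $\iota_0$. The paper itself does not spell this out but simply refers to Lemma 5.11 in Lindblad--Luo \cite{lindblad2018priori}, whose content is exactly what you wrote; your remark on the order of quantifiers (fix $l_1$ from the initial surface, then shrink $\TT$) is the only subtlety, and you handle it correctly.
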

\begin{proof}
See Lemma 5.11 in Lindblad-Luo \cite{lindblad2018priori}.
\end{proof}

\begin{rmk}
As shown above, our a priori estimates depend on $1/\lambda$ and thus there is no ``vanishing-resistivity limit". In the rest of this paper, we will suppose $\lambda=1$ for simplicity.
\end{rmk}

\bigskip

\subsection{Incompressible limit}

Now we are able to prove that the energy estimates for compressible resistive MHD equations are actually uniform in sound speed. In physics the sound speed is defined by 
\[
c(t,x):=\sqrt{p'(\rho)}.
\]


We assume $\{\rho_{\kk}(p)\}$ is parametrized by $\kk\in\R^+$ such that $p_\kk'(\rho)|_{\rho=1}=\kk$ Therefore one has
\begin{equation}
\rho_{\kk}\to 1~~as~~\kk\to\infty,
\end{equation} and for some fixed constant $c_0$ and $\forall m\leq 6$
\begin{equation}
|\rho_{\kk}^{(m)}(p)|\leq c_0\text{ and }|\rho_{\kk}^{(m)}(p)|\leq c_0|\rho_{\kk}'(p)|.
\end{equation}

\bigskip

From now on, we set the magnetic diffusion constant $\lambda=1$ because our previous estimates in Proposition \ref{total} deny the possibility of getting vanishing resistivity limit. The previous computation still implies the energy estimates in Proposition \ref{total} are uniform in $\kk$.

\begin{prop}\label{justifykk}
For $t\in[0,T],~r\leq 4$, the following estimates hold for all $\kk$:
\begin{equation}
E_{r,\kk}(t)-E_{r,\kk}(0)\lesssim_{K,1/\epsilon_0,M,c_0,\vol\DD_t,E_{r-1,\kk}^*} \int_0^t \PP(E_{r,\kk}^*(s)) ds,
\end{equation} for some polynomial $\PP$ with positive coefficients(the upper bound is uniform in $\kk$), provided the following a priori assumptions together with the imposed conditions on $\rho_{\kk}(p)$ hold:
\begin{equation}\label{justify0kk}
\begin{aligned}
|\theta_{\kk}|+\frac{1}{\iota_0}&\leq K~~on~~\p \Omega,\\
-\nabla_N P_{\kk}\geq \epsilon_0&>0~~~on~~\p\Omega,\\
1\leq|\rho_{\kk}|&\leq M~~in~~\Omega,\\
\sum_{s+k\leq 2}|\p^s D_t^k p_{\kk}|+|\p^s D_t^k B_{\kk}|+|\p^s D_t^k u_{\kk}|&\leq M~~in~~\Omega.\\
\end{aligned}
\end{equation}
\end{prop}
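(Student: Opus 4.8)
The assertion is that the a priori inequality \eqref{sum1} of Proposition \ref{total}, together with the justification of the a priori assumptions in Lemma \ref{justify0}, holds with implied constants \emph{independent of} $\kk$. My plan is not to establish anything new but to re-run the argument of Sections \ref{section4}--\ref{section7} with $\rho=\rho_\kk$ and to keep track of the $\kk$-dependence of every constant; I focus on the top order $r=4$, the cases $r<4$ being analogous and easier. The key observation --- already flagged in the remark after Theorem \ref{limit00} --- is that, although the hypotheses on $\rho_\kk$ include the lower bound $c_0^{-1}|\rho'_\kk(p)|^m\le|\rho^{(m)}_\kk(p)|$, this lower bound is never used in the energy estimate; only the upper bounds $|\rho^{(m)}_\kk(p)|\le c_0|\rho'_\kk(p)|^m$ and $|\rho^{(m)}_\kk(p)|\le c_0$ enter, and combining them yields, for $1\le m\le 6$, the $\kk$-uniform bound $|\rho^{(m)}_\kk(p)|\le C(c_0)\sqrt{\rho'_\kk(p)}$, which is the form I would use throughout. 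In particular the weight functions $\swt$, $\wt$ in $E_r$ are themselves bounded by $C(c_0)$ and are never inverted; since $\rho'_\kk(p)\to0$ as $\kk\to\infty$, the weighted energy contributions merely shrink in the limit.

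First I would revisit the $Q$-tensor, curl, and full-time-derivative estimates of Section \ref{section4}. The weight structure of $E_r$ in \eqref{Er} was designed so that every appearance of $\wt$, $\wk$, or of a $D_t$- or $\nabla$-derivative of these --- in $I_{21}$, in the remainders $R_4,R_5,R_6,R_8$, and in $E_{0,r}$ --- sits next to a top-order time derivative of $p$ and can be absorbed, by Cauchy--Schwarz and Young's inequality, against $\|\wt D_t^r p\|_{\lei}$ or $\|\swt\nabla D_t^{r-1}p\|_{\lei}$ inside $\sqrt{E_r}$. Using $|\rho^{(m)}_\kk(p)|\le C(c_0)\sqrt{\rho'_\kk(p)}$, each commutator factor $D_t^j(\wk)$ or $\nabla^j(\wk)$ inherits a $\sqrt{\rho'_\kk(p)}$, so all of $R_1,\dots,R_{15}$ and the boundary remainders $R_1^*,R_2^*$ close with constants depending only on $K,M,c_0,\vol\DD_t$. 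The weightless term $\|p\|_{1,3}$ is treated exactly as in Section \ref{section5} via Poincar\'e's inequality, becoming $\|p\|_{2,3}$, which is reduced later to the $5$-th order wave energy. The elliptic estimates (Proposition \ref{elliptic estimate}), the tensor estimates (Propositions \ref{tensor estimate} and \ref{theta estimate}), and the Hodge/div--curl decomposition involve only $\Delta$, tangential projections, and the geometry of $\p\DD_t$, with no factor of $\rho'_\kk(p)$ anywhere, so all of Section \ref{section5} and the boundary reductions of Section \ref{section6} go through verbatim and with $\kk$-uniform constants.

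The one genuinely delicate point, which I expect to be the main obstacle, is the common control of $W_5$ and $H_5$ in Section \ref{section7}. Here the $5$-th order wave equation $\wt D_t^6 p-\Delta D_t^4 p=w_5+\wh_5+\wl_5$ is tested against $\wt D_t^5 p$ --- which is precisely why $W_5^2$ carries the weights $\|\wt D_t^5 p\|_{\lei}^2$ and $\|\swt\nabla D_t^4 p\|_{\lei}^2$ --- and I would check term by term, following \eqref{mhdnmsl}--\eqref{X50}, that each piece of the right-hand side, once paired with $\wt D_t^5 p$, is either of $\le 4$ derivatives (hence bounded by $\PP(E_4^*)$, the $\wt$-prefactor being merely $\le c_0$) or else a multiple of $\wt D_t^5 p$ or $\swt\nabla D_t^4 p$ that is absorbed into $W_5^2$. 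The crucial instance is the term $\rho^{(2)}_\kk(p)\,\nabla p\cdot\nabla D_t^4 p\cdot D_t^5 p$, which I would rewrite as $\big(\rho^{(2)}_\kk(p)\nabla p/\rho'_\kk(p)^{3/2}\big)\big(\sqrt{\rho'_\kk(p)}\,\nabla D_t^4 p\big)\big(\rho'_\kk(p)\,D_t^5 p\big)$; since $|\rho^{(2)}_\kk(p)|\le c_0|\rho'_\kk(p)|^2$, the coefficient is $\le c_0 M\sqrt{\rho'_\kk(p)}\le C(c_0,M)$, so that no negative power of $\rho'_\kk(p)$ is ever needed. The elimination of the dangerous term $B\cdot\Delta D_t^4 B$ through the heat equation \eqref{heat5}, and the whole $H_5$ estimate \eqref{H5}, involve the equation of $B$, which carries no factor $\rho'_\kk(p)$ at all; the $\rho'_\kk(p)$ entering $h_5,\hh_5,\hl_5$ multiplies a top-order time derivative of $p$ and is again controlled by $W_5$. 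I would also verify that the elliptic reductions $\|\nabla^2 D_t^3 p\|_{\lei}\lesssim\|\Delta D_t^3 p\|_{\lei}\lesssim\|\wt D_t^5 p\|_{\lei}+(\text{terms of }\le 4\text{ derivatives})$ used in \eqref{wl51}--\eqref{w5}, and the estimates of $X_5$ and of $\wl_5$, never generate an inverse power of $\rho'_\kk(p)$ --- the only admissible inverse factor being $1/\lambda$.

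With \eqref{waveheat5} thus obtained with a $\kk$-uniform constant, \eqref{sum1} follows with a $\kk$-independent polynomial $\PP$, and the nonlinear Gronwall inequality of \cite{tao2006nonlinear} then closes the bound on a time interval whose length --- by the proof of Proposition \ref{total} --- depends only on $c_0,K,\EE(0),E_4^*(0),\vol\DD_t$, all of which are $\kk$-uniform; Lemma \ref{justify0} simultaneously recovers the a priori assumptions \eqref{justify0kk} from $E_{0,\kk},\dots,E_{4,\kk}$ with $\kk$-uniform constants, which completes the argument.
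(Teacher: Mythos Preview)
Your proposal is correct and follows precisely the route the paper intends: Proposition \ref{justifykk} is stated in the paper with no separate proof beyond the sentence ``the previous computation still implies the energy estimates in Proposition \ref{total} are uniform in $\kk$,'' and you have supplied the careful term-by-term audit (only the upper bounds $|\rho_\kk^{(m)}(p)|\le c_0|\rho'_\kk(p)|^m$ and $|\rho_\kk^{(m)}(p)|\le c_0$ are invoked, never the lower bound) that this assertion tacitly rests on. Your final paragraph on the Gronwall closure and Lemma \ref{justify0} strictly belongs to Theorem \ref{totalkk} rather than to the proposition itself, but this is harmless overreach.
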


\begin{flushright}
$\square$
\end{flushright}

Mimicing the proof of Proposition \ref{total}, one can get the following estimates uniform in $\kk$ from Proposition \ref{justifykk}:
\begin{thm}\label{totalkk}
There exists a positive continuous function $\TT$, such that: If $0<T\leq \TT(c_0,K,1/\epsilon_0,E_{4,\kk}^*(0),\volo)$, then any solution of \eqref{CMHD} in $t\in [0,T]$ satisfies the following bounds for some polynomial $\PP$ with positive coefficients:
\begin{equation}\label{completekk}
E_{4,\kk}^*(t)\lesssim 2 E_{4, \kk}^*(0),
\end{equation}provided the Rayleigh-Taylor physical sign condition
\[
-\nabla_N P_{\kk}\geq \epsilon_0>0~~on~~\p\Omega
\]holds.
\end{thm}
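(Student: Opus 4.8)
The proof is a line-by-line repetition of Proposition \ref{total} with $\lambda=1$, the only new point being to check that every constant that appears is independent of $\kk$; in particular, that no constant depends on a \emph{lower} bound for $\rho_\kk'(p)$, which degenerates to $0$ as $\kk\to\infty$. The starting point is Proposition \ref{justifykk}, which for $r=4$ gives, on $[0,T]$,
\[
E_{4,\kk}^*(t)-E_{4,\kk}^*(0)\lesssim_{K,1/\epsilon_0,M,c_0,\volo}\int_0^t\PP(E_{4,\kk}^*(s))\,ds,
\]
with $\PP$ a polynomial with positive coefficients whose coefficients and whose prefactor are the same for every $\kk$ (the auxiliary $E_{3,\kk}^*$ appearing in Proposition \ref{justifykk} is itself $\leq E_{4,\kk}^*$ and so may be absorbed into $\PP$). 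The reason $\kk$-uniformity holds at all is structural: in the energy $E_{r,\kk}$ the weights $\wt=\rho_\kk'(p)$ and $\swt$ are attached \emph{only} to the top-order quantities $D_t^5p$ and $\nabla D_t^4p$, so at every place in Sections \ref{section4}--\ref{section7} where $\rho_\kk'(p)$ or one of its derivatives intervenes --- the continuity equation $\di u=-\frac{\rho_\kk'(p)}{\rho_\kk}D_tp$, the wave equation \eqref{wave0}, its source term $w$ in \eqref{w}, and the commutator terms $\wh_{k+1},\wl_{k+1}$ and the lower-order term $X_5$ --- the degenerate factor multiplies a quantity that is itself weighted in the energy, so the product is bounded by $c_0$ times an energy term and the lost lower bound is never used.

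\textbf{Uniform control of the a priori quantities.} Next I would establish the $\kk$-uniform analogue of Lemma \ref{justify0}: for $s+k\leq 2$,
\[
\sum_{1\leq s+k\leq 2}\big(\|\nabla^sD_t^k p_\kk\|_{L^\infty(\Omega)}+\|\nabla^sD_t^k B_\kk\|_{L^\infty(\Omega)}+\|\nabla^sD_t^k u_\kk\|_{L^\infty(\Omega)}\big)+|\theta_\kk|_{\linf}+|\EE_\kk'(t)|\leq G\big(K,c_0,E_{0,\kk}^*,\dots,E_{4,\kk}^*,\volo\big).
\]
This is immediate from Sobolev embedding exactly as in Lemma \ref{justify0}, since these low-order norms carry \emph{no} weight at all, and the embedding constants depend only on $K$ --- through $\iota_0$, whose reciprocal $K_0$ is controlled via Lemma \ref{iota} --- hence are $\kk$-independent.

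\textbf{Bootstrap and conclusion.} With these ingredients the proof of Proposition \ref{total} goes through verbatim. On the maximal time interval on which \eqref{justify0kk} holds with the doubled constants $2K$, $\epsilon_0/2$, $2M$, the differential inequality of the first paragraph combined with the nonlinear Gronwall inequality of Tao \cite{tao2006nonlinear} yields $E_{4,\kk}^*(t)\leq 2E_{4,\kk}^*(0)$ as long as $t\leq\TT$, where $\TT=\TT(c_0,K,1/\epsilon_0,E_{4,\kk}^*(0),\volo)>0$ is obtained, exactly as in Proposition \ref{total}, as a continuous function of its arguments; in particular $\TT$ has a positive lower bound uniform in $\kk$ whenever $\sup_\kk E_{4,\kk}^*(0)<\infty$. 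The second paragraph then shows all the a priori quantities remain strictly inside the doubled bounds on $[0,\TT]$, so a standard open--closed--nonempty continuity argument closes the bootstrap, and the geometric bounds \eqref{justify2} follow from $D_tg\sim\nabla u$ and $D_tN_a=-\tfrac12N_a(D_tg^{cd})N_cN_d$ as in Proposition \ref{total}.

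\textbf{Main obstacle.} Since the heavy analysis of Sections \ref{section4}--\ref{section7} has already been performed, the only genuine work for this theorem is the term-by-term bookkeeping of the first paragraph: verifying that the degenerate factor $\rho_\kk'(p)$ (and each of its derivatives up to order $6$) never occurs \emph{bare} in any estimate, but is always paired with a quantity that itself carries a weight in the energy $E_{r,\kk}$, so that losing the lower bound on $\rho_\kk'(p)$ costs nothing. Granting that, Theorem \ref{totalkk} is a verbatim copy of Proposition \ref{total}.
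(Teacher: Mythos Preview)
Your proposal is correct and follows exactly the paper's approach: the paper simply states that Theorem \ref{totalkk} follows by ``mimicing the proof of Proposition \ref{total}'' starting from the $\kk$-uniform differential inequality of Proposition \ref{justifykk}, and offers no further details. Your write-up in fact gives more than the paper does, spelling out explicitly why the constants remain $\kk$-independent (the weight structure on the top-order time derivatives) and how the bootstrap closes, which is precisely the content the paper leaves implicit.
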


\begin{flushright}
$\square$
\end{flushright}

Given a sequence of initial data $(u_{0,\kk}, B_0, p_{0,\kk})$, if $E_{4,\kk}^*(0)$ are uniformly bounded in $\kk$, then a straightforward result of Theorem \ref{totalkk} is that the corresponding solution $(u_{\kk},B_{\kk},p_{\kk})$ converges in $C^2([0,T];\Omega)$.

\begin{thm}\label{limit}
Let $v_0,B_0$ be two divergence free vector fields with $B_0|_{\p\DD_0}=0$ such that its corrsponding pressure $q_0$ defined by
\[
\Delta \left(q_0+\frac{1}{2}|B_0|^2\right)=-(\p_i v_0^k \p_kv_0^i)+(\p_i B_0^k)(\p_k B_0^i),~~p_0|_{\p\DD_0}=0,
\]satisfies the Rayleigh-Taylor physical sign condition 
\[
-\nabla_N \left(q_0+\frac{1}{2}|B_0|^2\right)\bigg|_{\p\DD_0}\geq \epsilon_0>0.
\] Let $(v,B,q)$ be the solution to the incompressible resistive MHD equations with data $(v_0,B_0)$, i.e.,
\begin{equation}\label{iMHD}
\begin{cases}
D_t v=B\cdot\p B-\p (q+\frac{1}{2}|B|^2)~~~& \text{in}~\DD; \\
\dive v=0~~~&\text{in}~\DD; \\
D_t B-\Delta B=B\cdot\p v,~~~&\text{in}~\DD; \\
\dive B=0~~~&\text{in}~\DD,\\
q,B|_{\p\DD_0}=0~~~&~~~\\
(v,B)|_{t=0}=(v_0,B_0).~~~&~~~
\end{cases}
\end{equation}

Furthermore, let $(u_{\kk},B_{\kk},\rho_{\kk})$ be the solution to the compressible resistive MHD equations \eqref{CMHD} with density function $\rho_{\kk}(p)$ with initial data $(u_{0,\kk},B_0,\rho_{0,\kk})$ satisfying the compatibility condition up to 5-th order as well as the physical sign condition in \eqref{justify0kk}.

If we have $\rho_{\kk}\to\rho_0=1$ and $u_{0,\kk}\to v_0$ such that $E_{4,\kk}^*(0)$ is uniformly bounded in $\kk$, then one has 
\[
(u_{\kk},B_{\kk},\rho_{\kk})\to(v, B, 1)~~~~~\text{  in }C^2([0,T];\Omega).
\]
\end{thm}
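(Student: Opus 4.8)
\textbf{Proof proposal for Theorem \ref{limit} (the incompressible limit).}

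The plan is to pass to the limit $\kk\to\infty$ in the compressible system \eqref{CMHD} using the uniform bounds just established. First, by hypothesis $E_{4,\kk}^*(0)$ is bounded uniformly in $\kk$, so Theorem \ref{totalkk} gives a time $T>0$ (independent of $\kk$) and a uniform bound $E_{4,\kk}^*(t)\lesssim 2E_{4,\kk}^*(0)$ on $[0,T]$. Combining this with the Sobolev embedding and the justification Lemma \ref{justify0}, we obtain uniform-in-$\kk$ bounds on $u_{\kk},B_{\kk},\rho_{\kk}$ in $C^2([0,T];\Omega)$ (indeed on $\p^sD_t^k$ of all three fields with $s+k\leq 2$, plus higher Sobolev norms), as well as uniform control of the geometry of $\p\DD_t$ and of the Lagrangian flow maps $x_\kk(t,\cdot)$ via \eqref{justify2}. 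Hence, up to a subsequence, $(u_{\kk},B_{\kk},\rho_{\kk})$ converges in $C^2([0,T];\Omega)$ (after the loss of a little regularity, by Arzel\`a--Ascoli / compact embedding) to some limit $(v,\tilde B,\bar\rho)$, and the Lagrangian maps converge to a limiting flow, so the free domains $\DD_{t,\kk}$ converge to a limit domain $\DD_t$.

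Next I would identify the limit equations. Since $\rho_{0,\kk}\to 1$ and $|\rho_\kk^{(m)}(p_\kk)|\leq c_0$ while $\wk=\rho_\kk'(p_\kk)/\rho_\kk\to 0$ as $\kk\to\infty$ (because $p_\kk'(\rho)|_{\rho=1}=\kk\to\infty$ forces $\rho_\kk'(p_\kk)\to 0$), the continuity equation $D_t\rho_\kk=-\rho_\kk\dive u_\kk$ together with the uniform bound on $\dive u_\kk$ and the convergence $\rho_\kk\to\bar\rho$ forces $\bar\rho\equiv1$ and $\dive v=0$ in the limit. The first equation $\rho_\kk D_tu_\kk=B_\kk\cdot\p B_\kk-\p(p_\kk+\tfrac12|B_\kk|^2)$ passes to the limit once we show $p_\kk\to q$ for some $q$; here I use that $-\Delta(p_\kk+\tfrac12|B_\kk|^2)=\p_iu_\kk^k\p_ku_\kk^i-\p^iB_{\kk,k}\p_kB_\kk^i-|\p B_\kk|^2+\rho_\kk'(p_\kk)(D_tp_\kk)^2/\rho_\kk+\cdots$ (the wave equation \eqref{wave0} rearranged), with $p_\kk|_{\p\DD_{t,\kk}}=0$; the right-hand side converges uniformly and $\rho_\kk'(p_\kk)(D_tp_\kk)^2\to0$, so by elliptic regularity $p_\kk\to q$ where $q$ solves the incompressible pressure equation with $q|_{\p\DD_t}=0$. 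The heat equation $D_tB_\kk-\Delta B_\kk=B_\kk\cdot\p u_\kk-B_\kk\dive u_\kk$ passes to the limit directly since $B_\kk\dive u_\kk=B_\kk\wk D_tp_\kk\to 0$, giving $D_t\tilde B-\Delta\tilde B=\tilde B\cdot\p v$; likewise $\dive\tilde B=0$ and $\tilde B|_{\p\DD_t}=0$ and the kinematic boundary condition persist under the limit. Together with the initial data $u_{0,\kk}\to v_0$, $B_0$ fixed, the limit $(v,\tilde B,q)$ solves \eqref{iMHD} with data $(v_0,B_0)$.

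Finally I would upgrade subsequential convergence to full convergence: by uniqueness of the solution to the incompressible free-boundary resistive MHD system \eqref{iMHD} with the given data (which follows from the a priori estimates / the well-posedness cited for the incompressible problem, or from a direct energy argument since the difference of two solutions satisfies a closed linear estimate), every convergent subsequence has the same limit $(v,B,q)$, hence the whole family converges: $(u_\kk,B_\kk,\rho_\kk)\to(v,B,1)$ in $C^2([0,T];\Omega)$. The main obstacle is the identification of the pressure limit: one must rule out concentration/oscillation in $p_\kk$ as $\kk\to\infty$ and show $p_\kk$ (not merely $h_\kk(\rho_\kk)=\int_1^{\rho_\kk}p'(r)/r\,dr$) converges to a genuine incompressible pressure with the correct boundary condition on the moving domain; this is exactly where the weighted terms $\|\wt D_t^5p\|_0,\|\swt\nabla D_t^4p\|_0$ in the energy $E_4$ — whose bounds are uniform in $\kk$ because they do not use the lower bound of $\wt$ — are essential, since without them $D_t^5p_\kk$ would be uncontrolled as $\kk\to\infty$ and the elliptic estimate for $p_\kk$ would degenerate.
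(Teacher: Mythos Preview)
Your approach is essentially the same as the paper's: uniform $E_{4,\kk}^*$ bounds from Theorem \ref{totalkk}, Sobolev/Morrey embedding to get $C^2$ bounds and equicontinuity, Arzel\`a--Ascoli for a convergent subsequence, and then identification of the limit system by noting that the wave equation \eqref{wave0} degenerates to the elliptic pressure equation and that $B_\kk\dive u_\kk\to 0$. The paper's proof is in fact terser than yours on two points you handle more carefully --- it does not spell out the elliptic argument for the pressure limit, and it does not explicitly invoke uniqueness of the incompressible solution to pass from subsequential to full convergence --- so your write-up is, if anything, a cleaner version of the same argument.

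One small caveat on your ``main obstacle'' paragraph: the paper remarks (after Theorem \ref{limit00}) that in general it is the enthalpy $h(\rho_\kk)=\int_1^{\rho_\kk}p'(r)/r\,dr$, not $p_\kk$ itself, that should be identified with the incompressible pressure in the limit. In the model case $p_\kk=\kk(\rho_\kk-1)$ used in Section \ref{section9} the two coincide in the limit, but your phrasing ``show $p_\kk$ (not merely $h_\kk$) converges to a genuine incompressible pressure'' slightly inverts the emphasis; the unweighted control of $\|p_\kk\|_{s,k}$ for $s\geq 2$ in $E_{4,\kk}^*$ already gives compactness of $p_\kk$, and the identification then goes through the limiting elliptic equation exactly as you outline.
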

\begin{proof}
By Sobolev embedding, the $C^2$ norm of $u_{\kk},B_{\kk},p_{\kk}$ can be bounded by $E_{4,\kk}^*(t)$:
\[
\begin{aligned}
&~~~~\|u_{\kk}\|_{C^2([0,T];\Omega)}+\|B_{\kk}\|_{C^2([0,T];\Omega)}+\|\rho_{\kk}\|_{C^2([0,T];\Omega)}\\
&\lesssim_K \sum_{s+k\leq 2}\sum_{j=0}^2\|\nabla^{s+j}D_t^k u_{\kk}\|_{\lli}^2+\|\nabla^{s+j}D_t^k B_{\kk}\|_{\lli}^2+\|\nabla^{s+j}D_t^k \rho_{\kk}\|_{\lli}^2 \\
&\lesssim E_{4,\kk}^*(t)\leq 2E_{4,\kk}^*(0).
\end{aligned}
\] Hence this together with energy estimates in Theorem \ref{totalkk} yields the uniform boundedness of the $C^2$ norm of $u_{\kk},B_{\kk},\rho_{\kk}$. Besides, using Morrey's embedding theorem, the uniform boundedness of $$\sum_{s+k\leq 2}\sum_{j=0}^2\|\nabla^{s+j}D_t^k u_{\kk}\|_{\lli}^2+\|\nabla^{s+j}D_t^k B_{\kk}\|_{\lli}^2+\|\nabla^{s+j}D_t^k \rho_{\kk}\|_{\lli}^2$$ implies that $$\nabla^s D_t^k u_{\kk}, \nabla^s D_t^k B_{\kk}, \nabla^s D_t^k \rho_{\kk}\in C^{0,\frac{1}{2}}(\Omega).$$ This H\"older continuity implies the equi-continuity of $u_{\kk},B_{\kk},\rho_{\kk}$ in $C^2([0,T];\Omega)$.  Therefore, Arzel\`a-Ascoli theorem gives a convergent subsequence (we still call it $\{(u_{\kk},B_{\kk},\rho_{\kk})\}_{\kk}$).

Finally, as $\kk\to\infty$, we have $(u_{\kk},B_{\kk},\rho_{\kk})\to (v,B,1)$ because now the wave equation (for compressible MHD) converges to the elliptic equation (for incompressible MHD) and the term $B_{\kk}\dive u_{\kk}$ will vanish when $\kk=\infty$, i.e., the equation of $B_{\kk}$ for compressible MHD converges to that of $B$ for incompressible MHD. This is actually a direct consequence of the uniform boundedness of $\|\rho_\kk\|_{C^2([0,T];\Omega)}$.
\end{proof}

\section{Construction of the initial data satisfying the compatibility conditions}\label{section9}

Now we are going to the last step of passing to the incompressible limit: Given an initial data $(v_0,B_0)$ for the incompressible resistive MHD system, we construct a sequence of initial datum of compressible resistive MHD system $\{(u_{0,\kk},B_{0,\kk},\rho_{0,\kk})\}_{\kk\in\R^+}$ , depending on the sound speed $\kk$, that satisfies the compatibility conditions of wave and heat equations and converges to $(v_0,B_0,1)$ as $\kk\to\infty$. Once we can do this, then by Theorem \ref{limit}, the incompressible limit exists for this sequence. From now on, we assume for simplicity\footnote{The proof in general case can be similarly proceeded. See Luo \cite{luo2018ww}.} that $$p_{\kk}(\rho_\kk)=\kk(\rho_{\kk}-1),~~i.e.,~\rho_{\kk}=1+\frac{p_{\kk}}{\kk}.$$

\subsection{Construction of the initial data}

\subsubsection*{Review of compatibility conditions}

Consider the compressible resistive MHD equations in Lagrangian coordinates
\[
\begin{cases}
\left( 1+\frac{p}{\kappa}\right) D_t u=B\cdot\nabla  B-\nabla (p+\frac{1}{2}|B|^2)~~~& \text{in}~\Omega; \\
\frac{1}{p+\kk}D_t p+\dive u=0~~~&\text{in}~\Omega; \\
D_t B-\lambda\Delta B=B\cdot\nabla  u+B\frac{1}{p+\kk}D_t p,~~~&\text{in}~\Omega; \\
\dive B=0~~~&\text{in}~\Omega,
\end{cases}
\] with boundary conditions
\[
p|_{\p\Omega}=0,~~B|_{\p\Omega}=\mathbf{0}.
\]and initial data
\[
u|_{t=0}=u_0,~p|_{t=0}=p_0,~B|_{t=0}=B_0, \text{ depending on }\kk.
\] 

In order for the initial data to be compatiable with the boundary condition, we need $$p_0|_{\p\Omega}=0,~B_0|_{\p\Omega}=\mathbf{0}.$$ Also we need $\dive u_0|_{\p\Omega}=0$ to guarentee the compatibility condition $D_t p|_{\p\Omega}=0$ when $t=0$.

$B$ satisfies the following heat equation
\begin{equation}\label{ih0}
D_t B-\Delta B\sim B\cdot\nabla u+\kk^{-1} D_t p B,
\end{equation}
while $p$ satisfies the following wave equation after taking divergence of the first equation of the compressible MHD system
\begin{equation}\label{iw0}
\begin{aligned}
\kk^{-1} D_t^2 p-\Delta p\sim &B\cdot\Delta B+(\nabla u)\symdot(\nabla u)+(\nabla B)(\nabla B) \\
&+\kk^{-1}((D_t p)^2-|\nabla p|^2+(\nabla p)(\nabla B)B) \\
\sim& -B\cdot D_t B+(B\cdot\nabla u)\cdot B+(\nabla u)\symdot(\nabla u)+(\nabla B)(\nabla B) \\
&+\kk^{-1}(|B|^2 D_t p+(D_t p)^2-|\nabla p|^2+(\nabla p)(\nabla B)B).
\end{aligned}
\end{equation}

The compatibility condition for wave/heat equation requires that $D_t B|_{\p\Omega}=\mathbf{0}$ and $D_t^2 p|_{\p\Omega}=0$ at time $t=0$. Therefore we must have
\begin{equation}
\begin{aligned}
\Delta_0 p_0+(\p u_0)\symdot(\p u_0)+(\p B_0)(\p B_0)&=0~~on~\p \Omega, \\
\Delta B_0&=\mathbf{0}~~on~\p \Omega,\\
\end{aligned}
\end{equation}where $\Delta_0$ is the Laplacian with respect to the smooth metric at $t=0$ on $\p\Omega$, and $\p_i=\p y^a/\p x_i\cdot \p/\p y^a$ is a smooth differential operator at $t=0$.

Similarly, if we take more time derivative to get higher order wave/heat equations
\[
\begin{aligned}
\kk^{-1} D_t^{k} B&=\Delta D_t^{k-1} B+T_{k}\\
\kk^{-1} D_t^{k+1}p&=\Delta D_t^{k-1} p+S_{k}
\end{aligned} 
\]for some function $T_k, S_k$ , then we need to guarentee that $D_t^k p|_{\p\Omega}=0, D_t^{k} B|_{\p\Omega}=\mathbf{0}$ at $t=0$ by requiring 
\[
\begin{aligned}
\Delta B_{k-1}+T_{k}|_{t=0}&=\mathbf{0}~~on~\p \Omega.\\
\Delta p_{k-1}+S_{k}|_{t=0}&=0~~on~\p \Omega.
\end{aligned} 
\]Here $p_k:=D_t^k p|_{t=0}$ and $B_k:=D_t^k B|_{t=0}$.

\subsubsection*{Constructing the initial data}

Now we construct the initial data $p_k, B_k$ which satisfies the compatibility conditions up to order $N$.

Suppose $v_0$ and $B_0$ are given divergence-free vector field. We still choose $B_0$ as the initial data for compressible equations. Now we define 
\begin{equation}\label{constructu0}
u_0=v_0+\p\phi.
\end{equation} Then the continuity equation requires that 
\begin{equation}\label{constructphi}
\Delta_0 \phi\sim -\kk^{-1} p_1,
\end{equation}and we will choose boundary condition such as 
\begin{equation}\label{phibdry}
\nabla_N\phi|_{\p\Omega}=0.
\end{equation}

Moreover, taking $D_t$ on \eqref{ih0} and \eqref{iw0} repeatedly, we should require that
\begin{equation}\label{constructbp}
\begin{aligned}
\Delta_0 B_k&\sim B_{k+1}-B_k\cdot \p u_0-\kk^{-1}(p_{k+1} B_0+p_1B_k)+G_k(u_0,B_0,p_0,B_1,p_1,\cdots,B_{k-1},p_{k-1})\\
\Delta_0 p_k&\sim ~~~\kk^{-1} p_{k+2}+B_0\cdot B_{k+1} +B_1\cdot B_k +(\p B_0)(\p B_k)+F_k(u_0,B_0,p_0,B_1,p_1,\cdots,B_{k-1},p_{k-1}) \\
&~~~~-\kk^{-1}(|B_0|^2p_{k+1}+p_1\cdot  p_{k+1}+B_0\cdot B_k\cdot p_1+p_2\cdot p_k-(\p p_k)(\p p_0))\\
&~~~~-\kk^{-1}(B_k (\p B_0)(\p p_0)+B_0 (\p B_k)(\p p_0)+B_0 (\p B_0)(\p p_k)).\\
\text{and }&~~B_k|_{\p\Omega}=\mathbf{0},~~p_k|_{\p\Omega}=0,~~k=0,1,\cdots, N.
\end{aligned}
\end{equation}Here $F_k,G_k$ are functions of $u_0,B_0,p_0,B_1,p_1,\cdots,B_{k-1},p_{k-1}$ and their spatial derivatives. If we prescrib $B_{N+1},B_{N+2},p_{N+1},p_{N+2}$ ro be any functions vanishing on $\p\Omega$, e.g.,
\begin{equation}\label{compbdry}
B_{N+1},B_{N+2}=\mathbf{0},~~p_{N+1},p_{N+2}=0,
\end{equation}
then \eqref{constructu0}, \eqref{constructphi}, \eqref{phibdry}, \eqref{constructbp} together with \eqref{compbdry} give a system of $$(u_0, p_0, B_1,p_1,\cdots, B_N, p_N, B_{N+1}, p_{N+1}, B_{N+2}, p_{N+2})$$ such that the data of compressible equation $u_{0,\kk}\to v_0 $ as $\kk\to\infty$. Since the system \eqref{constructu0}-\eqref{compbdry} is an elliptic system and $\rho$ is totally determined by $p$, so we only need to give a priori bound uniform in $\kk$ as $\kk\to\infty$ which will directly imply the existence of such data and thus complete our proof.

\subsection{A priori bounds and the existence of the initial data}

Our energy estimates in Theorem \ref{totalkk} requires the compatibility conditions up to order 5, i.e.,
\begin{equation}\label{cc}
p_k|_{\p\Omega}=0,~~B_k|_{\p\Omega}=\mathbf{0},~~\forall 0\leq k\leq 5.
\end{equation} This can be achieved by solving the following elliptic system for $0\leq k\leq 3$.
\begin{equation}\label{ellipticini} 
\begin{cases}
u_0=v_0+\p\phi~~&\text{ in }\Omega \\
\Delta\phi=-\kk^{-1}p_1~~&\text{ in }\Omega\text{ and }\nabla_N\phi|_{\p\Omega}=0 \\
\Delta B_k= B_{k+1}-B_k\cdot \p u_0-\kk^{-1}(p_{k+1} B_0+p_1B_k)+G_k~~&\text{ in }\Omega\text{ and }B_k|_{\p\Omega}=0 \\
\Delta p_k=\kk^{-1} p_{k+2}+B_0\cdot B_{k+1} +B_1\cdot B_k +(\p B_0)(\p B_k)+F_k \\
~~~~~~~~~~~~-\kk^{-1}(|B_0|^2p_{k+1}+p_1\cdot  p_{k+1}+B_0\cdot B_k\cdot p_1+p_2\cdot p_k-(\p p_k)(\p p_0))\\
~~~~~~~~~~~~-\kk^{-1}(B_k (\p B_0)(\p p_0)+B_0 (\p B_k)(\p p_0)+B_0 (\p B_0)(\p p_k))~~&\text{ in }\Omega\text{ and }p_k|_{\p\Omega}=0\\
p_4=p_5=0,~B_4=B_5=\mathbf{0}~~&\text{ in }\Omega. \\
\end{cases}
\end{equation}

Here
\begin{equation}
F_0=(\p u_0)\symdot(\p u_0),G_0=\mathbf{0}. 
\end{equation}

\begin{equation}\label{f1g1}
\begin{aligned}
F_1&=c_1(\p u_0)^3+c_{\alpha,\beta}(\p^{\alpha}u_0)(\p^{\beta}  p_0)+c_{\alpha,\mu}(\p u_0)(\p B_0)(\p B_0)+\kk^{-1}c_{\alpha,\beta,\mu}B_0(\p u_0)(\p B_0)(\p p_0),\\
G_1&=c_{\alpha,\mu}(\p^{\alpha}u_0)(\p^{\mu}  B_0)+c_1(\p u_0)(\p u_0)B_0. ~~~1\leq\alpha,\beta,\mu\leq 2,\alpha+\beta=\alpha+\mu=3.
\end{aligned}
\end{equation}

For $k=2,3$, one has
\begin{equation}\label{fk}
\begin{aligned}
F_k&=c^{\gamma_1\cdots \gamma_n}_{\alpha_1\cdots\alpha_m\beta_1\cdots\beta_n,k}(\p^{\alpha_1}u_0)\cdots(\p^{\alpha_m}u_0)(\p^{\beta_1}p_{\gamma_1})\cdots(\p^{\beta_n}p_{\gamma_n}) \\
&~~~~+c^{\nu_1\cdots \nu_l}_{\alpha_1\cdots\alpha_m\mu_1\cdots\mu_l,k}(\p^{\alpha_1}u_0)\cdots(\p^{\alpha_m}u_0)(\p^{\mu_1}B_{\nu_1})\cdots(\p^{\mu_l}B_{\nu_l}) \\
&~~~~+\kk^{-1}c^{\gamma'_1\cdots \gamma'_{n'}\nu'_1\cdots \nu'_{l'}}_{\alpha'_1\cdots\alpha'_{m'}\beta'_1\cdots\beta'_{n'}\mu'_1\cdots\mu'_{l'},k}(\p^{\alpha'_1}u_0)\cdots(\p^{\alpha'_{m'}}u_0)(\p^{\beta'_1}p_{\gamma'_1})\cdots(\p^{\beta'_{n'}}p_{\gamma_{n'}}) (\p^{\mu'_1}B_{\nu'_1})\cdots(\p^{\mu'_{l'}}B_{\nu'_{l'}}),
\end{aligned}
\end{equation}
where
\begin{equation}\label{fkk}
\begin{aligned}
\sum_{i=1}^{m}\alpha_i+\sum_{j=1}^{n}(\beta_j+\gamma_j)&=k+2 \\
\sum_{i=1}^{m}\alpha_i+\sum_{h=1}^{l}(\mu_h+\nu_h)&=k+2  \\
\sum_{i=1}^{m'}\alpha'_i+\sum_{j=1}^{n'}(\beta'_j+\gamma'_j)+\sum_{h=1}^{l'}(\mu'_h+\nu'_h)&=k+2.\\
1\leq \alpha_i\leq k, ~~1\leq\beta_j+\gamma_j\leq k+1,&~~\beta_j\geq 1,~~0\leq\gamma_j\leq k-1,~~1\leq m+n\leq k+2.\\
~\mu_h+\nu_h\leq k+1,&~~1\leq\mu_h\leq k,~~0\leq \nu_h\leq k-1,~~1\leq m+l\leq k+2.\\
1\leq\alpha'_i\leq k,~~1\leq \beta'_j+\gamma'_j+\mu'_h+\nu'_h\leq k+2,&~~1\leq \beta'_j\leq k,0\leq \gamma'_j,\nu'_h\leq k-1,~~0\leq \mu'_h\leq k,\\
1\leq m'+n'+l'\leq k+3.&~~~
\end{aligned}
\end{equation}
 and
\begin{equation}\label{gk}
\begin{aligned}
G_k&=c^{\gamma_1\cdots \gamma_n}_{\alpha_1\cdots\alpha_m\beta_1\cdots\beta_n,k}(\p^{\alpha_1}u_0)\cdots(\p^{\alpha_m}u_0)(\p^{\beta_1}B_{\gamma_1})\cdots(\p^{\beta_n}B_{\gamma_n}) \\
&~~~~+c^{\nu_1\cdots \nu_l}_{\alpha_1\cdots\alpha_{m'}\mu_1\cdots\mu_l,k}(\p^{\alpha_1}u_0)\cdots(\p^{\alpha_m}u_0)(\p^{\mu_1}B_{\nu_1})\cdots(\p^{\mu_l}B_{\nu_l}), 
\end{aligned}
\end{equation}
where
\begin{equation}\label{gkk}
\begin{aligned}
\sum_{i=1}^{m}\alpha_i+\sum_{j=1}^{n}(\beta_j+\gamma_j)&=k+2 \\
\sum_{i=1}^{m'}\alpha_i+\sum_{h=1}^{l}(\mu_h+\nu_h)&=k+1  \\
1\leq \alpha_i\leq k, ~~1\leq\beta_j+\gamma_j\leq k+1,&~~\beta_j\geq 1,~~0\leq\gamma_j\leq k-1,~~1\leq m+n\leq k+2.\\
~\mu_h+\nu_h\leq k,&~~1\leq\mu_h\leq k,~~0\leq \nu_h\leq k-1,~~1\leq m'+l\leq k+2.\\
\end{aligned}
\end{equation}

This is an elliptic system. To show the existence of a solution to \eqref{cc}, one only needs to give the a priori bound uniform in $\kk$ for this system which directly implies the existence. We impose $v_0\in  H^5$ and $B_0\in H^6$. For $0\leq k\leq 3$, we define
\[
m_k:=\|p_k\|_{H^{5-k}(\Omega)}+\|B_k\|_{H^{5-k}(\Omega)},~~~m_*:=\sum_{k} m_k+\|u_0\|_{H^5}.
\]We will repeatedly use elliptic estimates. 

\begin{itemize}

\item Estimates on  $u_0$

We have 
\begin{equation}\label{iniu0}
\|u_0\|_{H^5}\leq\|v_0\|_{H^5}+\|\p\phi\|_{H^5}\lesssim\|v_0\|_{H^4}+\kk^{-1}\|p_0\|_{H^4}
\end{equation}

\item Control of $F_k,G_k$

The precise form of $F_k$ and $G_k$ are the same as $F_k$ in Section 7.1 of Lindblad-Luo \cite{lindblad2018priori} up to some lower order terms. Therefore we only list the result and refer readers to that paper for details:
\begin{equation}
\begin{aligned}
\|F_2\|_{H^1}+\|G_2\|_{H^1}&\lesssim \mathcal{P}(\|u_0\|_{H^5},\|B_0\|_{H^5},\|p_1\|_{H^2},\|B_1\|_{H^2})\\
\|F_3\|_{\lli}+\|G_3\|_{\lli}&\lesssim \mathcal{P}(\|u_0\|_{H^5},\|B_0\|_{H^5},\|p_1\|_{H^3},\|B_1\|_{H^3},\|p_2\|_{H^2},\|B_2\|_{H^2}).
\end{aligned}
\end{equation}

\item Reduce all the diffuculty to $\|B_2\|_{\lli}$ and $\|B_3\|_{\lli}$.

Using elliptic estimates and Poincar\'e's inequality, one has

\begin{equation}\label{ini01}
\begin{aligned}
\|p_0\|_{H^5}&\lesssim\kk^{-1}(\|p_2\|_{H^3}+\|p_0\|_{H^4}^2+\|p_1\|_{H^3}^2+\|p_0\|_{H^4}\|B_0\|_{H^4}\|B_0\|_{H^3})+\PP(\|u_0\|_{H^4},\|B_0\|_{H^5}) \\
\|B_1\|_{H^4}&\lesssim \|B_2\|_{H^2}+\|B_1\|_{H^2}\|u_0\|_{H^3}+\|G_1\|_{H^2}+\kk^{-1}(\|B_0\|_{H^2}\|p_2\|_{H^2}+\|B_1\|_{H^2}\|p_1\|_{H^2}) \\
\|p_1\|_{H^4}&\lesssim \kk^{-1}(\|p_3\|_{H^2}+\|p_1\|_{H^3}\|p_0\|_{H^3}+\|p_2\|_{H^2}\|p_1\|_{H^2}+\|p_1\|_{H^3}\|B_0\|_{H^2}\|B_0\|_{H^3}\\
&+\|p_0\|_{H^3}\|B_1\|_{H^3}\|B_0\|_{H^2})+\|F_1\|_{H^2}+\|B_1\|_{H^2}\|\Delta B_0\|_{H^2}+\|B_0\|_{H^2}\|\Delta B_1\|_{H^2}+\|B_1\|_{H^3}\|B_0\|_{H^3}.
\end{aligned}
\end{equation}

As we can see, we reduce the estimates of $\|p_0\|_{H^5}+\|B_1\|_{H^4}+\|p_1\|_{H^4}$ to $\|B_2\|_{H^2}$, lower order terms of $p_1, B_1$, initial data and $\kk^{-1}m_*$. For those lower order terms, one can repeat the elliptic estimates above to reduce these terms to further lower order until these terms are only assigned by $L^2$-norm. 

The elliptic estimates for $B_k$ and $p_k$ when $k\geq 2$ are listed as follows:
\begin{equation}\label{ini23}
\begin{aligned}
\|B_2\|_{H^3}&\lesssim\|B_3\|_{H^1}+\|B_2\|_{H^1}\|u_0\|_{H^3}+\|G_2\|_{H^1} \\
&~~~~+\kk^{-1}(\|B_0\|_{H^2}\|p_3\|_{H^1}+\|B_1\|_{H^2}\|p_2\|_{H^1}+\|B_2\|_{H^1}\|p_1\|_{H^2}), \\
\|p_2\|_{H^3}&\lesssim\kk^{-1}(\|p_2\|_{H^1}\|p_0\|_{H^3}+\|p_3\|_{H^1}\|p_1\|_{H^2}+\|p_2\|_{H^2}\|B_0\|_{H^2}\|B_0\|_{H^3}+\|p_0\|_{H^3}\|B_0\|_{H^3}\|B_2\|_{H^2})\\
&~~~~+\|B_2\|_{H^1}\|\Delta B_0\|_{H^2}+\|B_0\|_{H^2}\|\Delta B_2\|_{H^1}+\|B_2\|_{H^2}\|B_0\|_{H^3}+\|F_2\|_{H^1}; \\
\|B_3\|_{H^2}&\lesssim\|B_3\|_{L^2}\|u_0\|_{H^3}+\|G_3\|_{L^2}+\kk^{-1}(\|B_0\|_{H^2}\|p_3\|_{L^2}+\|B_1\|_{H^2}\|p_3\|_{L^2}+\|B_3\|_{L^2}\|p_1\|_{L^2}) \\
\|p_3\|_{H^2}&\lesssim\kk^{-1}(\|p_3\|_{L^2}\|p_0\|_{H^3}+\|p_3\|_{H^1}\|B_0\|_{H^3}+\|p_3\|_{L^2}\|B_0\|_{H^3}^2+\|p_0\|_{H^3}\|B_0\|_{H^3}\|B_3\|_{L^2}) \\
&~~~~+\|F_3\|_{L^2}+\|B_3\|_{L^2}\|\Delta B_0\|_{H^2}+\|B_0\|_{H^2}\|u_0\|_{H^3}\|B_3\|_{L^2}
\end{aligned}
\end{equation}

Summing up \eqref{ini01} and \eqref{ini23}, we can find that $\|p_k\|_{H^{5-k}},\|B_k\|_{H^{5-k}}$ are bounded by lower order terms of themselves together with initial data and $\kk^{-1} m_*$. These lower order terms can be repeatedly reduced to further lower order until being assigned with $L^2$ norm. In other words, after repeatedly using elliptic estimates, one actually can get the estiamtes of the following form:
\begin{equation}\label{inireduce}
\sum_{k=1}^{5}m_k\lesssim \kk^{-1}m_*+\PP(\|u_0\|_{H^5},\|B_0\|_{H^5},\|p_0\|_{H^4})+\PP(\|B_0\|_{H^3},\|u_0\|_{H^3})(\|B_2\|_{L^2}+\|B_3\|_{L^2}).
\end{equation}

\item Reduction to $B_0$:

It remains to deal with $\|B_2\|_{L^2}+\|B_3\|_{L^2}$. We can use the heat equation of $B$ again to reduce it to $B_0$. The advantage is that $B_0$ is a prescribed data with given regularity instead of those $p_k, B_k$ whose control relies on the equations. In fact, we have
\[
B_3=\Delta B_2+B_2\cdot u_0+\kk^{-1}\text{ terms + lower order terms containing }B_1, B_0, p_0, u_0,
\]and
\[
B_2=\Delta B_1+B_1\cdot u_0+\kk^{-1}\text{ terms + lower order terms containing }B_0, u_0.
\]
Then $\|B_2\|_{L^2}+\|B_3\|_{L^2}$ can be bounded by $\|B_1\|_{H^4}$ together with initial data and $\kk^{-1}m_*$.
In other words, we can re-write the energy estimates to be
\begin{equation}\label{inireduce}
m_*\lesssim \kk^{-1}m_*+\PP(\|u_0\|_{H^5},\|B_0\|_{H^5})+\PP(\|B_0\|_{H^3},\|u_0\|_{H^3},\|B_1\|_{H^4}).
\end{equation}

Finally we have
\[
B_1=\Delta B_0+B_0\cdot\p u_0+\kk^{-1} B_0 p_1
\]which is derived by \eqref{ih0}, and thus
\[
\|B_1\|_{H^4}\lesssim \|B_0\|_{H^6}+\|B_0\|_{H^4}\|v_0\|_{H^5}+\kk^{-1}\PP(m_*).
\]

Therefore, we get the energy estimates uniform in $\kk$ as follows
\begin{equation}\label{inie}
m_*\lesssim \kk^{-1}\PP(m_*)+\PP(\|B_0\|_{H^6},\|v_0\|_{H^5}).
\end{equation} Let $\kk\to\infty$, and we finally get the uniform a priori bound for the elliptic system \eqref{cc}. Therefore we complete the construction of initial data satisfying the compatibility conditions of wave/heat equations. 
\end{itemize}

\subsection{Uniform enegry bounds, convergence of data and Rayleigh-Taylor physical sign condition}

Now we are able to show that $E_{4,\kk}(0)$ in Theorem \ref{totalkk} is uniformly bounded regardless of $\kk$. In fact
\[
\sum_{s+k\leq 4}\io\rho_0 Q(\p^s p_k,\p^s p_k)+ Q(\p^s B_k,\p^s B_k)\dx\lesssim\sum_{k=0}^4 \|p_k\|_{H^{4-k}}^2+\|B_k\|_{H^{4-k}}^2\lesssim m_*
\] and by the Sobolev trace lemma together with $P=p+\frac{1}{2}|B|^2$,
\[
\sum_{s+k\leq 4}\int_{\p\Omega}\rho_0 Q(\p^s P_k,\p^s P_k)\dx\lesssim\sum_{k=0}^4 \|p_k\|_{H^{5-k}}^2+\|B_k\|_{H^{5-k}}^2\lesssim m_*.
\]

Additionally, we can mimic the proof of Lemma \eqref{usk} to prove that
\[
\sum_{k+s\leq 4}\io \rho_0Q(\p^s D_t^k u|_{t=0},\p^s D_t^k u|_{t=0})\dx\lesssim m_*.
\]

Since $p_4=p_5=0$ and $B_4=B_5=\mathbf{0}$, we have $$\sum_{k\leq 5} W_k^2(0)+H_k^2(0)\lesssim m_*.$$

Summing up these bounds, we know $E_{4,\kk}^*(0)$ is bounded uniformly in $\kk$ as $\kk\to\infty$.

To achieve the incompressible limit, the very last thing is to verify the uniform convergence of the initial data we constructed above and the Rayleigh-Taylor sign condition, as $\kk\to\infty$. Actually,
\[
\|u_{0,\kk}-v_0\|_{H^5}\leq\|\p \phi_{\kk}\|_{H^5}\lesssim\kk^{-1}\|p_{1,\kk}\|_{H^4},
\]and thus by Sobolev embedding $H^5\hookrightarrow C^2$ in a bounded domain of $\R^3$, we actually prove $u_{0,\kk}\to v_0$ in $C^2$ because $\|p_{1,\kk}\|_{H^4}$ has uniform upper bound independent of $\kk$.

As for the Rayleigh-Taylor physical sign condition, we can assume it holds when $t=0$, i.e.,
\begin{equation}\label{inisign}
\nabla_N\left(p_0+\frac{1}{2}|B_0|^2\right)\leq-\epsilon_0<0~~on~\p\DD_0.
\end{equation} Due to Lemma \eqref{justify0}, it can be perturbed in a small time interval $[0,T]$.

Now, given any data for the incompressible resistive MHD equations $(v_0,B_0)$ such that the corresponding pressure term $q_0$ satisfies $$-\nabla_N \left(q_0+\frac{1}{2}|B_0|^2\right)\geq\epsilon_0>0,$$ our initial data $p_{0,\kk}$ will also satisfy \eqref{inisign} when $\kk^{-1}$ is sufficiently small. In fact, we have
\[
\Delta p_{0,\kk}\sim (\p u_{0,\kk})(\p u_{0,\kk})+(\p B_{0})(\p B_{0})+\kk^{-1}p_{2,\kk},
\]which implies
\[
\Delta (p_{0,\kk}-q_0)\sim (\p u_{0,\kk})(\p^2\phi_{\kk})+(\p^2\phi_{\kk})^2+\kk^{-1}p_{2,\kk}.
\]The standard elliptic estimate yields the convergence. Hence, the incompressible limit of compressible resistive MHD equations is achieved.

\begin{appendix}
\section{Appendix}
\noindent {\bf List of notations:}

\begin{itemize}
\item $D_{t}$: the material derivative $D_t=\p_t+u\cdot\p$
\item $\p_i$: partial derivative with respect to Eulerian coordinate $x_i$
\item $\DD_t\in\R^n$: the domain occupied by fluid particles at time $t$ in Eulerian coordinate
\item $\Omega\in\R^n$: the domain occupied by fluid particles in Lagrangian coordinate
\item $\p_a = \frac{\p}{\p y_a}$: partial derivative with respect to Lagrangian coordinate $y_a$
\item $\nab_a$: covariant derivative with respect to $y_a$
\item $\Pi S$: projected tensor $S$ on the boundary
\item $\cnab,\cp$: projected derivative on the boundary
\item $N$: the outward unit normal of the boundary
\item $\theta=\cnab N$: the second fundamental form of the boundary
\item $\sigma=\text{Tr }\theta$: the mean curvature
\end{itemize}
{\bf Mixed norms}
\begin{itemize}
\item $\|f\|_{s,k} = \|\nab^sD_t^k f\|_{\lli}$
\item $|f|_{s,k}= |\nab^sD_t^k f|_{\llb}$
\end{itemize}

\subsection{Extension of the normal to the interior and the geodesic normal coordinate}

The definition of our energy \eqref{Er} relies on extending the normal to the interior. This can be accomplished by foliating the domain close to the boundary into the surface that is not self-intersecting.  Also we want to control the evolution of the moving boundary, which can be estimated by the time derivative of the normal in Lagrangian coordinate. We conclude the above statements by the following two lemmata, whose proof can be found in \cite{christodoulou2000motion}.\\
\lem \label{trace 1} let $\iota_0$ be the injective radius (\ref{inj rad}), and let $d(y)=\text{dist}_g(y,\p\Omega)$ be the geodesic distance in the metric $g$ from $y$ to $\p\Omega$. Then the co-normal $n=\nab d$ to the set $S_a=\p\{y\in\Omega:d(y)=a\}$ satisfies, when $d(y)\leq \frac{\iota_0}{2}$ that
\begin{align}
|\nab n| \lesssim |\theta|_{L^{\infty}(\p\Omega)}, \label{extending nab n}\\
|D_t n| \lesssim |D_t g|_{L^{\infty}(\Omega)}.
\end{align}

\begin{flushright}
$\square$
\end{flushright}

\lem \label{trace 2} let $\iota_0$ be the injective radius (\ref{inj rad}),and $d_0$ be a fixed number such that $\frac{\iota_0}{16}\leq d_0\leq \frac{\iota_0}{2}$. Let $\eta$ be a smooth cut-off function satisfying $0\leq \eta(d)\leq1$, $\eta(d)=1$ when $d\leq\frac{d_0}{4}$ and $\eta(d)=0$ when $d>\frac{d_0}{2}$. Then the psudo-Riemannian metric $\gamma$ given by
$$
\gamma_{ab}=g_{ab}-\tilde{n}_a\tilde{n}_b,
$$
where $\tilde{n}_c=\eta(\frac{d}{d_0})\nab_cd$
satisfies
\begin{align}
|\nab\gamma|_{L^{\infty}(\Omega)}\lesssim(|\theta|_{L^{\infty}(\p\Omega)}+\frac{1}{\iota_0})\\
|D_t\gamma(t,y)|\lesssim |D_t g|_{L^{\infty}(\Omega)}. \label{Dtgamma}
\end{align}

\begin{flushright}
$\square$
\end{flushright}

\rmk The above two lemmata show that $|D_t n|$ and $|D_t\gamma(t,y)|$ involved in the $Q$-tensor can be controlled by the a priori assumptions \eqref{asspriori}, because the behaviour of $D_tg$ is almost like $\nab v$ by that of \eqref{Dtg}. Therefore, the time derivative on the coefficients of the $Q$-tensor only produces lower order terms. In addition, by the first equation of \eqref{asspriori}, $|\nab n|$ and $|\nab\gamma|$ are controlled by $K$, which is essential when proving the elliptic estimates.

\subsection{Sobolev inequalities: Embedding, interpolation and trace lemma}

The following results are standard in $\R^n$, but we need to illustrate how it depends on the geometry of the moving domain. The coefficients involved in our inequalities depend on $K$, whose reciprocal is the lower bound for the injective radius $\iota_0$. The proofs of these lemmata are omitted which can be found in the appendix of \cite{christodoulou2000motion} and \cite{lindblad2018priori}.

\subsubsection*{Sobolev embedding}

First we list some Sobolev lemmata in a domain with boundary.

\lem \label{interior sobolev} (Interior Sobolev inequalities)
Suppose $\frac{1}{\iota_0}\leq K$ and $\alpha$ is a $(0,r)$ tensor, then
\begin{align}
\|u\|_{L^{\frac{2n}{n-2s}}(\Omega)}\lesssim_{K} \sum_{l=0}^s\|\nab^l u\|_{\lli},\q 2s<n \label{interior sobolev 2s<n},\\
\|u\|_{L^{\infty}(\Omega)}\lesssim_{K} \sum_{l=0}^{s}\|\nab^l u\|_{\lli},\q 2s>n \label{interior sobolev 2s>n}.
\end{align}

\begin{flushright}
$\square$
\end{flushright}

Similarly on $\p\Omega$, we have
\lem \label{boundary soboolev} (Boundary Sobolev inequalities)
\begin{align}
\|u\|_{L^{\frac{2(n-1)}{n-1-2s}}(\Omega)}\lesssim_{K} \sum_{l=0}^s|\nab^l u|_{\llb},\q 2s<n-1,\\
\|u\|_{L^{\infty}(\Omega)}\lesssim_{K} \delta|\nab^s u|_{\llb}+\delta^{-1}\sum_{l=0}^{s-1}|\nab^l u|_{\llb},\q 2s>n-1,
\end{align}
for any $\delta>0$. In addition, for the boundary we can also interpret the norm be given by the inner product $\langle \alpha, \alpha\rangle=\gamma^{IJ}\alpha_I\alpha_J$, and the covariant derivative is then given by $\cnab$.

\begin{flushright}
$\square$
\end{flushright}

\subsubsection*{Poincar\'e's inequalities}

\lem \label{poincare}(Poincar\'{e} type inequalities)
Let $q:\Omega\subset\R^n\to\R$ be a smooth and $q|_{\p\Omega}=0$, then
\begin{align}
\|q\|_{\lli}&\lesssim (\vol\Omega)^{\frac{1}{n}}\|\nab q\|_{\lli},\label{FB}\\
\|\nab q\|_{\lli}&\lesssim (\vol\Omega)^{\frac{1}{n}}\|\lap q\|_{\lli}.\label{Poincare}
\end{align}
\begin{proof} The first inequality is called Faber-Krahns theorem which can be found in \cite{YS}. The second inequality follows from the first and integration by parts.
\end{proof}

\subsubsection*{Interpolation inequalities}

\thm \label{int interpolation} (Interior interpolation)
Let $u$ be a $(0,r)$ tensor, and suppose $\iota_0^{-1}\leq K$, we have
\begin{align}
\sum_{j=0}^{l}\|\nab^ju\|_{L^{\frac{2r}{k}}(\Omega)}\lesssim \|u\|_{L^{\frac{2(r-l)}{k-l}}(\Omega)}^{1-\frac{l}{r}}(\sum_{i=0}^{r}\|\nab^iu\|_{\lli}K^{r-i})^{\frac{l}{r}}.
\end{align}
In particular, if $k=l$,
\begin{align}
\sum_{j=0}^{k}\|\nab^ju\|_{L^{\frac{2r}{k}}(\Omega)}\lesssim \|u\|_{L^{\infty}(\Omega)}^{1-\frac{k}{r}}(\sum_{i=0}^{r}\|\nab^iu\|_{\lli}K^{r-i})^{\frac{k}{r}}. \label{31}
\end{align}
\begin{flushright}
$\square$
\end{flushright}

\subsubsection*{Interpolation on $\p\Omega$}
We need the following boundary interpolation inequalities to control the boundary part of our energy \eqref{Er}.
\thm \label{bdy interpolation} (Boundary interpolation)
Let $u$ be a $(0,r)$ tensor, then
\begin{equation}
|\cnab^l u|_{L^{\frac{2r}{k}}(\p\Omega)}\lesssim |u|_{L^{\frac{2(r-l)}{k-l}}(\p\Omega)}^{1-\frac{l}{r}}|\cnab^ru|_{\llb}^{\frac{l}{r}}.
\end{equation}
In particular, if $k=l$,
\begin{align}
|\cnab^k u|_{L^{\frac{2r}{k}}(\p\Omega)}\lesssim |u|_{L^{\infty}(\p\Omega)}^{1-\frac{k}{r}}|\cnab^ru|_{\llb}^{\frac{k}{r}}.
\end{align}

\begin{flushright}
$\square$
\end{flushright}

\thm \label{gag-ni thm}(Gagliardo-Nirenberg interpolation inequality)
Let $u$ be a $(0,r)$ tensor, and suppose $\p\Omega\in\R^2$ and $\frac{1}{\iota_0}\leq K$, we have
\begin{align}
|u|_{L^4{(\p\Omega})}^2 \lesssim_K |u|_{\llb}|u|_{H^1(\p\Omega)},\label{gag-ni}
\end{align}
where the boundary Sobolev norm $\|u\|_{H^1(\p\Omega)}$ is defined via tangential derivative $\cnab$.
\begin{proof}
See Theorem A.8 in Lindblad-Luo \cite{lindblad2018priori} for details. Its proof requires the result of Constantin-Seregin \cite{Cons}.
\end{proof}
\rmk One can also prove a generalized \eqref{gag-ni} of the form
\begin{equation}
|u|_{L^p{(\p\Omega})}^2 \lesssim |u|_{L^{p/2}(\p\Omega)}|u|_{H^1(\p\Omega)},\qquad p\geq 4.
\end{equation}

The next theorem is to delta with the interpolation of tangential projections on the boundary. First, we define that the projected $(0,r), r<t$ derivative $\Pi^{r,0}\nab^r \alpha$ has components
$$(\Pi\nab^r)_{i_1,\cdots,i_r} \alpha_{i_r+1,\cdots,i_t}=\gamma_{i_1}^{j_1}\cdots\gamma_{i_r}^{j_r}\nab_{j_1}\cdots\nab_{j_r} \alpha_{i_r+1,\cdots,i_t},$$ for any $(0,t)$ tensor $\alpha$. The detailed proof can be found in \cite{christodoulou2000motion}.

\thm \label{tensor interpolation} (Tensor interpolation)
Let $\alpha$ be a $(0,t)$ tensor, $r'= r-2$. Suppose $|\theta|+|\frac{1}{\iota_0}|\leq K$, then we have for $t+s<r$ 
\begin{equation}
\begin{aligned}
|(\Pi^{s,0}\nab^s)\alpha|_{L^{\frac{2r'}{s}}(\p\Omega)}\lesssim_K |\alpha|_{\linf}^{1-s/r'}&\Big(|\nab^{r'}\alpha|_{\llb}+(1+|\theta|_{\linf})^{r'}\\
&\cdot(|\theta|_{\linf}+|\cnab^{r'}\theta|_{\llb})\sum_{l=0}^{r'-1}|\nab^l\alpha|_{\llb}\Big)\\
&+(1+|\theta|_{\linf})^s(|\theta|_{\linf}+|\cnab^{r'}\theta|_{\llb})^{s/r'}\sum_{l=0}^{r'-1}|\nab^l\alpha|_{\llb}.
\end{aligned}
\end{equation}
In particular,
\begin{equation}
\begin{aligned}
&\Big{|}|(\Pi^{s,0}\nab^s)\alpha|\cdot|(\Pi^{r'-s,0}\nab^{r'-s}\beta)|\Big{|}_{\llb}\\
&\lesssim_K(|\alpha|_{\linf}+\sum_{l=0}^{r'-1}|\nab^l\alpha\|_{\llb})|\nab^{r'}\beta|_{\llb}\\
&~~~~~~~~+(|\beta|_{\linf}+\sum_{l=0}^{r'-1}|\nab^l\beta|_{\llb})|\nab^{r'}\alpha|_{\llb}+(1+|\theta|_{\linf})^{r'}(|\theta|_{\linf}+|\cnab^{r'}\theta|_{\llb})\\
&~~~~~~~~~+(|\alpha|_{\linf}+\sum_{l=0}^{r'-1}|\nab^l\alpha|_{\llb})(|\beta|_{\linf}+\sum_{l=0}^{r'-1}|\nab^l\beta|_{\llb}) \label{tensor int}.
\end{aligned}
\end{equation}
\begin{proof}
See \cite{christodoulou2000motion}, section 4.
\end{proof}

\subsubsection*{Sobolev trace theorem}
\thm \label{trace theorem} (Trace theorem)
Let $u$ be a $(0,r)$ tensor, and assume that $|\theta|_{\linf}+\frac{1}{\iota_0}\!\leq \!K$. Then
\begin{align}
|u|_{\llb}\lesssim_{K,r,n} \sum_{j\leq 1}|\nab^j u|_{\lli} \label{trace}
\end{align}
\begin{proof}
Let $N'$ be the extension of the normal to the interior, then the Green's identity yields
$$
\int_{\p\Omega}|u|^2\,d\mu_{\gamma} = \int_{\Omega}\nab_k(N'^k|u|^2)\,d\mu.
$$
Hence, by Lemma \ref{trace 1} and \ref{trace 2}, (\ref{trace}) follows.
\end{proof}
\end{appendix}

\bigskip

\section*{Acknowledgement} The author would like to thank his advisor Professor Hans Lindblad for his guidance in PDE. The author would also like to thank Professor Chenyun Luo and Professor Chengchun Hao for helpful discussions.

\end{document}